\crefname{assumption}{Assumption}{Assumptions}
\Crefname{condition}{Condition}{Conditions}
\Crefname{remark}{Remark}{Remarks}
\DeclareMathOperator*{\argmax}{arg\,max}
\numberwithin{equation}{section}
\newcommand{\postparam}{\boldsymbol{\tilde{\theta}}_n}
\newcommand{\fisherinformation}{\hat{J}_n}
\newcommand{\postfisherinformation}{\bar{J}_n}
\newcommand{\param}{\theta}
\newcommand{\logposterior}{\overline{L}_n}
\newcommand{\posterior}{\Pi_n}
\newcommand{\mle}{\hat{\theta}_n}
\newcommand{\map}{\bar{\theta}_n}
\newcommand{\prior}{\pi}
\newcommand{\loglikelihood}{L_n}
\def\kappabar{\bar{\kappa}}
\def\kappamle{\kappa}
\def\deltamle{\delta}
\def\deltabar{\bar{\delta}}
\def\jbar{\postfisherinformation(\map)}
\def\jhat{\fisherinformation(\mle)}
\def\jbarplus{\bar{J}^{p}_{n}(\map,\deltabar)}
\def\jhatplus{\hat{J}^{p}_{n}(\mle,\deltamle)}
\def\jbarminus{\bar{J}^{m}_{n}(\map,\deltabar)}
\def\jhatminus{\hat{J}^{m}_{n}(\mle,\deltamle)}
\def\trinv#1{\Tr\left[{#1}^{-1}\right]}
\def\jbarinvtr{\trinv{\postfisherinformation(\map)}}
\def\jbarminusinvtr{\trinv{\jbarminus}}
\def\jbarplusinvtr{\trinv{\jbarplus}}
\def\jhatinvtr{\trinv{\fisherinformation(\mle)}}
\def\jhatminusinvtr{\trinv{\jhatminus}}
\def\jhatplusinvtr{\trinv{\jhatplus}}
\def\det#1{\left|\mathrm{det}\left(#1\right)\right|}
\def\jbarminusdet{\det{\jbarminus}}
\def\jbarplusdet{\det{\jbarplus}}
\def\jhatminusdet{\det{\jhatminus}}
\def\jhatplusdet{\det{\jhatplus}}
\def\decaybar{\bar{\mathscr{D}}(n, \deltabar)}
\def\decaybarplus{\bar{\mathscr{D}}^{p}(n, \deltabar)}
\def\decayhat{\hat{\mathscr{D}}(n, \deltamle)}
\def\decayhatplus{\hat{\mathscr{D}}^{p}(n, \deltamle)}
\newcommand{\minevaluemle}{\lambda_{\min}(\mle)}
\newcommand{\minevaluemap}{\overline{\lambda}_{\min}(\map)}
\def\minevalmapplus{
    \overline{\lambda}^{p}_{\min}(\map, \deltabar)}
\def\minevalmapminus{
    \overline{\lambda}^{m}_{\min}(\map, \deltabar)}
\def\minevalmleplus{\hat{\lambda}^{p}_{\min}(\mle, \deltamle)}
\def\minevalmleminus{\hat{\lambda}^{m}_{\min}(\mle, \deltamle)}
\def\m#1{M_{#1}}
\def\mtil#1{\widetilde{M}_{#1}}
\def\mbar#1{\overline{M}_{#1}}
\def\mhat#1{\hat{M}_{#1}}
\def\MainMapThm{
\begin{theorem}\label{main_map}
Fix $n\in\mathbbm{N}$, suppose that \Cref{assump1,assump_prior1,assump2,assump_size_of_delta_bar,assump7,assump_kappa1} hold and retain the notation thereof.
Let TV denote the total variation distance. Then:
\begin{align*}
    % \MoveEqLeft
    \text{TV}\left(
        \mathcal{L}\left(
            \sqrt{n}\left(\postparam - \map\right)\right),
            \mathcal{N}(0,\postfisherinformation(\map)^{-1})
    \right)
% \\ &
\leq{}
    A_1 n^{-1/2} +
    2 \decaybar
    % \exp\left[
    %     -\frac{1}{2} \left(
    %         \deltabar\sqrt{n} - \sqrt{\Tr\left[
    %             \postfisherinformation(\map)^{-1}
    %         \right]}\right)^2
    %         \minevaluemap
    %     \right] 
        +
    A_2n^{d/2}e^{-n\kappabar},
\end{align*}
where
\begin{align*}
		&A_1=\frac{\sqrt{3}\, \jbarinvtr \mbar{2}}
        {4\sqrt{\left(\minevaluemap - \deltabar\, \mbar{2} \right)
                \left( 1- \decaybar \right)}}; \qquad A_2=\frac{ 2
         \det{\jhatplus} ^{1/2} \mhat{1}}
        {\left(2\pi \right)^{d/2} \left( 1-\decayhatplus\right)}.
\end{align*}
\end{theorem}
}
\def\WassertsteinMAPThm{
\begin{theorem}\label{thm:wasserstein_map}
	Fix $n\in\mathbbm{N}$, suppose that \Cref{assump1,assump_prior1,assump2,assump_size_of_delta_bar,assump7,assump_kappa1} hold and retain the notation thereof.
Then:
    \begin{align*}
    \MoveEqLeft
    W_1\left(
        \mathcal{L}\left(
                \sqrt{n}\left(\postparam-\map\right)
            \right),
            \mathcal{N}(0,\postfisherinformation(\map)^{-1})\right)
    \\& \leq
        B_1 n^{-1/2} +
        B_3 \left( B_2+\sqrt{n}\, B_4\right)n^{d/2}e^{-n\kappabar}
         + \left(
                \deltabar \sqrt{n} +
                \sqrt{\frac{2\pi}{\minevaluemap}} + B_2
            \right) \decaybar,
\end{align*}
where
\begin{align*}
    B_1 :={}& \frac{\sqrt{3} \jbarinvtr \mbar{2}}
               {2 \left(\minevaluemap - \deltabar \mbar{2} \right)
               \sqrt{1- \decaybar}};\\
    B_2 :={}&
        \frac{\jbarplusdet^{1/2} \jbarminusdet^{-1/2} \sqrt{\jbarminusinvtr}}
             {1 - \decaybarplus};
    \\
    B_3 :={}& \frac
        { \det{\jhatplus} ^{1/2} \mhat{1}}
        {\left(2\pi \right)^{d/2} \left( 1-\decayhatplus\right)};\qquad
    B_4 :={} \int_{
        \|u-\hat{\theta}_n\| > \deltabar - \|\hat{\theta}_n-\bar{\theta}_n\|}
        \|u-\map\|\pi(u)du.
    \end{align*}
\end{theorem}
}
\def\WTwoMAPThm{
\begin{theorem}\label{w2_map}
Fix $n\in\mathbbm{N}$, suppose that \Cref{assump1,assump_prior1,assump2,assump_size_of_delta_bar,assump7,assump_kappa1} hold and retain the notation thereof. Let $\boldsymbol{Z}_n\sim\mathcal{N}\left(0,\postfisherinformation(\map)^{-1}\right)$	Then:
\begin{align*}
\MoveEqLeft
\sup_{v:\|v\|\leq 1}
\left|\mathbbm{E}\left[
    \left<v,\sqrt{n}\left(\postparam-\map\right)\right>^2
\right] -
\mathbbm{E}\left[\left<v,\boldsymbol{Z}_n\right>^2\right]\right|\\
\leq & C_1  n^{-1/2} +
    \frac{\sqrt{3 \jbarinvtr } C_1 \mbar{2}}
        {4\left( \minevaluemap - \deltabar\,\mbar{2} \right)} \,
        n^{-1} 
\\&+ C_3\left( C_2 + n \,C_4\right)\, n^{d/2} e^{-n\kappabar} +  \left(
    \deltabar^2n + \frac{\sqrt{2\pi}}{\minevaluemap} + C_2
\right) \decaybar,
\end{align*}
for
\begin{align*}
C_1:={}&\frac{\sqrt{3} \left(\jbarinvtr\right)^{3/2} \mbar{2}}
             {\left(\minevaluemap -\deltabar \mbar{2}\right)
              \left(1 - \decaybar \right)}
\\
C_2 :={}&\frac{\jbarplusdet^{1/2} \jbarminusdet^{-1/2} \jbarminusinvtr}
              {1 - \decaybarplus}
\\
C_3 :={}&\frac{\jhatplusdet^{1/2} \mhat{1}}
              {(2 \pi)^{d/2}\left(1 - \decayhatplus \right)};\qquad
C_4:={} \int_{\|v-\mle\|>\deltabar-\|\mle-\map\|}
    \|u-\map\|^2\pi(u)du.
\end{align*}
\end{theorem}
}
\def\MainThm{
\begin{theorem}\label{theorem_main}
	Fix $n\in\mathbb{N}$ and suppose that \Cref{assump1,assump_prior1,assump_size_of_delta,assump_fisher,assump_kappa,assump_prior2} hold. Let TV denote the total variation distance. We have the following upper bound:
% 	\begin{align*}
% 		&\text{TV}\left(
%             \mathcal{L}\left(
%                 \sqrt{n}\left(\postparam-\mle\right)\right),
%                 \mathcal{N}(0,\fisherinformation(\mle)^{-1})
%             \right)\\
% 		\leq &D_1n^{-1/2} + D_2n^{d/2} e^{-n\kappa} +
%         2\exp\left[
%             -\frac{1}{2}\left(
%                 \delta\sqrt{n}-\sqrt{
%                     \text{Tr}\left[
%                         \fisherinformation(\mle)^{-1} \right]}
%                     \right)^2
%                 \minevaluemle\right],
% 	\end{align*}
% 	where
% 	\begin{align*}
% 		&D_1:=\frac{\sqrt{3}\,\text{Tr}\left[\fisherinformation(\mle)^{-1}\right]\sqrt{\widetilde{M}_1\hat{M}_1M_2}}{4\sqrt{\left(\frac{\minevaluemle}{M_2}-\delta\right)\Bigg( 1- \exp\left[-\frac{1}{2}\left(\delta\sqrt{n}-\sqrt{\text{Tr}\left[\fisherinformation(\mle)^{-1}\right]}\right)^2\minevaluemle\right]\Bigg)}}\\
% 		&\hspace{1cm}+\frac{M_1\sqrt{\widetilde{M}_1\hat{M}_1}}{2\sqrt{\minevaluemle -\delta M_2}};\\
% 		&D_2:=\frac{ 2\hat{M}_1\left|\text{det}\left(R_1(n,\delta)\right)\right|^{-1/2}}{\left(2\pi \right)^{d/2} \left\{  1 -  \exp\left[ -\frac{1}{2} \left(\delta\sqrt{n}-\sqrt{\text{Tr}\left[R_1(n,\delta)\right]}\right)^2 \left[\left\|R_1(n,\delta) \right\|_{op}\right]^{-1} \right] \right\}}.
% 	\end{align*}
% %
% New version:
\begin{align*}
\MoveEqLeft
    \text{TV}\left(
        \mathcal{L}\left(
            \sqrt{n}\left(\postparam-\mle\right)\right),
            \mathcal{N}(0,\fisherinformation(\mle)^{-1})
        \right)
    \leq D_1 n^{-1/2} + D_2 n^{d/2} e^{-n\kappamle} + 2 \decayhat,
\end{align*}
where
\begin{align*}
D_1 :={}&
\frac{\sqrt{\mtil{1} \mhat{1}}}
     {2\sqrt{\minevaluemle - \deltamle \m{2}}}
\left(
    \frac{\sqrt{3} \jhatinvtr  }{2\sqrt{1 - \decayhat}} \m{2} +
    \m{1}
\right);
% \\={}&
% \frac
%     {\sqrt{3} \jhatinvtr \m{2} \sqrt{\mtil{1} \mhat{1}} }
%     {4 \sqrt{
%         \left( \minevaluemle - \deltamle \m{2} \right)
%         \left( 1 - \decayhat \right)
%     }} +
%     \frac{\m{1} \sqrt{\mtil{1} \mhat{1}}}
%          {\sqrt{\minevaluemle - \deltamle \m{2}}}
\\
D_2 :={}& \frac
    {2 \mhat{1} \jhatplusdet^{1/2}}
    {(2 \pi)^{d/2} \left(1 - \decayhatplus \right)}.
\end{align*}
\end{theorem}
}
\def\OneWassThm{
\begin{theorem}\label{theorem_1wasserstein}
	Fix $n\in\mathbbm{N}$ and suppose that \Cref{assump1,assump_prior1,assump_size_of_delta,assump_fisher,assump_kappa,assump_prior2} hold.  We have the following upper bound:
\begin{align*}
\MoveEqLeft
W_1\left(
    \mathcal{L}\left(
        \sqrt{n}\left(\postparam-\mle\right)\right),
        \mathcal{N}(0,\fisherinformation(\mle)^{-1})
        \right)
\\&\leq
E_1 n^{-1/2} +
E_3 \left( E_2 + \sqrt{n} E_4 \right) n^{d/2} e^{-n\kappamle}
+ \left(\deltamle\sqrt{n} + \sqrt{\frac{2\pi}{\minevaluemle}} +
E_2\right)\decayhat,
\end{align*}
where

\begin{align*}
E_1 :={}&
\frac{\mtil{1} \mhat{1}}{\minevaluemle -\deltamle M_2} \left(
    \frac{\sqrt{3} \jhatinvtr }{2 \sqrt{1 - \decayhat} } \m{2}
    + \m{1}  \right)
% \\={}&
% \frac{\sqrt{3} \jhatinvtr \mtil{1} \mhat{1} \m{2}}
%         {2\left(\minevaluemle-\deltamle \m{2} \right)
%          \sqrt{1 - \decayhat} }
%  + \frac{\m{1}  \mtil{1} \mhat{1}}{\minevaluemle -\deltamle M_2};\\
\\
E_2 :={}&\frac
    {\mhat{1} \mtil{1} \jhatplusdet^{1/2} \jhatminusdet^{-1/2}
     \sqrt{\jhatminusinvtr}}
    {1 - \decayhatplus}
\\
E_3 ={}&\frac
    {\mhat{1} \jhatplusdet^{1/2}}
    {(2 \pi)^{d/2} \left(1 - \decayhatplus \right)};\qquad E_4:={}\int_{\|u-\mle\|>\deltamle} \|u-\mle\| \pi(u)du.
\end{align*}

\end{theorem}
}
\def\TwoWassThm{
\begin{theorem}\label{theorem_2wasserstein}
		Fix $n\in\mathbbm{N}$ and suppose that \Cref{assump1,assump_prior1,assump_size_of_delta,assump_fisher,assump_kappa,assump_prior2} hold. Let $\boldsymbol{Z}_n\sim \mathcal{N}(0,\fisherinformation(\mle)^{-1})$. We have that

\begin{align*}
\MoveEqLeft
\sup_{v:\|v\|\leq 1} \left|
    \mathbbm{E}\left[
        \left<v,\sqrt{n}\left(\postparam-\mle\right)\right>^2
    \right] -
    \mathbbm{E}\left[
        \left<v,\boldsymbol{Z}_n\right>^2\right]
    \right|
\\ & \leq
    \left(F_1\right)^2 n^{-1} + F_1 F_2 n^{-1/2} +
    \frac{F_5 (F_3)^2 + F_3 F_4 \sqrt{n}} {(2\pi)^{d/2}}
         n^{d/2} e^{-n\kappamle}
\\&
     + \left(
        \deltamle^2 n + \sqrt{\frac{2\pi}{\minevaluemle}} + F_3 F_5
    \right) \decayhat
\end{align*}
where
\begin{align*}
F_1 :={}&
\frac{\mtil{1} \mhat{1}}{\minevaluemle - \deltamle \m{2}}\left(
    \frac{\sqrt{3} \jhatinvtr } {\sqrt{1 - \decayhat}}  \m{2} + \m{1}
\right);\qquad
% \\={}&
%     \frac{\sqrt{3} \jhatinvtr \mtil{1} \mhat{1} \m{2}}
%          {2 \left(\minevaluemle - \deltamle \m{2} \right)
%          \sqrt{1 - \decayhat}} +
%     \frac{\mtil{1} \mhat{1} \m{1}}{\minevaluemle - \deltamle \m{2}}
F_2 :={} \frac{2\sqrt{\jhatinvtr}}{\sqrt{1 - \decayhat}};
\\
F_3 :={}& \frac{\mhat{1} \jhatplusdet^{1/2}}{1 - \decayhatplus};\qquad
F_4 :={} \int_{\|u\|>\deltamle} \|u\|^2 \pi(u + \hat{\theta}_n) du;
\\
F_5:={}& \mtil{1} \jhatminusdet^{-1/2} \jhatminusinvtr.
\end{align*}

\end{theorem}
}
\def\UnivariateThm{
\begin{theorem}\label{theorem_univariate}
	Assume that we study a \textbf{univariate posterior}, i.e. that ${d=1}$. Let $\sigma_n^2:=\fisherinformation(\mle)^{-1}$. Suppose that \Cref{assump1,assump_prior1,assump_fisher,assump_kappa,assump_prior2} hold. Let $\boldsymbol{Z}_n\sim\mathcal{N}(0,\sigma_n^2)$.  Then, for any function $g:\mathbbm{R}\to\mathbbm{R}$ which is integrable with respect to the posterior and with respect to $\mathcal{N}(0,\sigma_n^2)$,
	\begin{align*}
		&\left|\mathbbm{E}\left[g\left(\sqrt{n}(\postparam-\hat{\theta}_n)\right)\right]-\mathbbm{E}\left[g(\boldsymbol{Z}_n)\right]\right|\\
		\leq& (G_1+G_2) n^{-1/2}+\left|\int_{|u|>\deltamle\sqrt{n}}g(u)\frac{e^{-u^2/(2\sigma_n^2)}}{\sqrt{2\pi\sigma_n^2}}du\right|\\
		&+\left(G_3\int_{
				|u|>\deltamle}|g(u\sqrt{n})|\pi(u+\mle)du +G_4 \right) n^{1/2}e^{-n\kappamle}+G_5e^{-\deltamle^2n/(2\sigma_n^2)},
	\end{align*}
	for
	\begin{align*}
		&C_n^{(1)}:=\frac{1}{2\sigma_n^2}-\frac{\deltamle M_2}{3}>0;\qquad C_n^{(2)}:=\left(\frac{1}{2\sigma_n^2}-\frac{\deltamle M_2}{6}\right)>0;\\
		&C_n^{(3)}:=\left(\frac{1}{2\sigma_n^2}+\frac{\deltamle M_2}{3}\right)>0;\qquad
		C_n^{(4)}:=\left(\frac{1}{2\sigma_n^2}+\frac{\deltamle M_2}{6}\right)>0
	\end{align*}
	and
	\begin{align*}
	    &G_1:=\frac{2\widetilde{M}_1\hat{M}_1}{\sqrt{2\pi\sigma_n^2}}\int_{-\deltamle\sqrt{n}}^{\deltamle\sqrt{n}}|ug(u)|\left[\left(M_1+\frac{3}{\deltamle}\right)e^{-C_n^{(1)}u^2}-\frac{3}{\deltamle}e^{-C_n^{(2)}u^2}\right]du;\\
	    &G_2:=\frac{2\sqrt{C_n^{(4)}}\left(\widetilde{M}_1\hat{M}_1\right)^2\left(M_1+\frac{3}{\deltamle}\right)\int_{-\deltamle\sqrt{n}}^{\deltamle\sqrt{n}}|g(u)|e^{-C_n^{(2)}u^2}du}{C_n^{(1)}\pi\sqrt{\sigma_n^2}\left(1-2e^{-\deltamle^2nC_n^{(4)}}\right)}\left(\frac{M_1+\frac{3}{\deltamle}}{C_n^{(1)}}-\frac{3}{\deltamle C_n^{(2)}}\right);\\
	    &G_3:=\frac{ \hat{M}_1\sqrt{C_n^{(4)}}}{\sqrt{\pi} \left\{1-2\exp\left[-C_n^{(4)}\deltamle^2n\right]\right\}};\\
		&G_4:=\frac{ \hat{M}_1^2\widetilde{M}_1C_n^{(4)}\int_{|t|\leq\deltamle\sqrt{n}}|g(t)|e^{-C_n^{(2)}t^2}dt}{\pi\,  \left\{ 1-2\exp\left[-C_n^{(4)}\deltamle^2n\right]\right\}^2};\\
		&G_5:=\frac{2 \hat{M}_1\widetilde{M}_1\sqrt{C_n^{(4)}}\int_{|t|\leq\deltamle\sqrt{n}}|g(t)|e^{-C_n^{(2)}t^2}dt}{\sqrt{\pi}\,  \left\{ 1-2\exp\left[-C_n^{(4)}\deltamle^2n\right]\right\}}.
	\end{align*}
\end{theorem}
}
\newtheorem{assumption}{Assumption}
\begin{document}

\title{How good is your Laplace approximation of the Bayesian posterior?  Finite-sample computable error bounds for a variety of useful divergences}

\author{\name Mikołaj J. Kasprzak \email kasprzak@essec.edu \\
       \addr Department of Information Systems, Data Analytics and Operations\\
       ESSEC Business School\\
       3 Avenue Bernard Hirsch\\
       95021 Cergy-Pontoise Cedex, France
       \AND
       \name Ryan Giordano \email rgiordano@berkeley.edu \\
       \addr Department of Statistics\\
       University of California, Berkeley\\
       367 Evans Hall\\
       Berkeley, CA 94720-3860, USA
       \AND
       \name Tamara Broderick \email tamarab@mit.edu \\
       \addr Laboratory for Information and Decision Systems\\
       Massachusetts Institute of Technology\\
       32 Vassar Street, 32-D608\\
       Cambridge, MA 02139, USA}

\editor{Matthew Hoffman}

\maketitle

\begin{abstract}%   <- trailing '%' for backward compatibility of .sty file
The Laplace approximation is a popular method for constructing a Gaussian approximation to the Bayesian posterior and thereby approximating the posterior mean and variance. But approximation quality is a concern. One might consider using rate-of-convergence bounds from certain versions of the Bayesian Central Limit Theorem (BCLT) to provide quality guarantees. But existing bounds require assumptions that are unrealistic even for relatively simple real-life Bayesian analyses; more specifically, existing bounds either (1) require knowing the true data-generating parameter, (2) are asymptotic in the number of samples, (3) do not control the Bayesian posterior mean, or (4) require strongly log concave models to compute. In this work, we provide the first computable bounds on quality that simultaneously (1) do not require knowing the true parameter, (2) apply to finite samples, (3) control posterior means and variances, and (4) apply generally to models that satisfy the conditions of the asymptotic BCLT. Moreover, we substantially improve the dimension dependence of existing bounds; in fact, we achieve the lowest-order dimension dependence possible in the general case. We compute exact constants in our bounds for a variety of standard models, including logistic regression, and numerically demonstrate their utility. We provide a framework for analysis of more complex models.
\end{abstract}

\begin{keywords}
	Bayesian inference, Laplace approximation, Bernstein--von Mises theorem, approximate inference, log-Sobolev inequality
\end{keywords}

\section{Introduction}

\subsection{Motivation}
Bayesian inference is widely used in modern statistical practice to provide both point estimates and uncertainties of unknown quantities; \citet{abbott2023population,flaxman2020estimating,freedman2021measurements,jones2021estimating} give just a few especially influential recent examples. In applications, practitioners typically report posterior means and (co)variances. These expectations under the posterior distribution are, however, often intractable
to compute, so practitioners must use approximations. The \textit{Laplace approximation} is popular in many communities (e.g., \citealt[Section 4.4]{bishop}; \citealt[Section 4.6.8.2]{murphy};
\citealt{drton,thygesen2017validation,gomez2021spatial,ritter2018online,riihimaki2014laplace,long2013fast,wang2018efficient}) in large part due to its ease of use and computational speed. It approximates the Bayesian posterior by a suitably chosen Gaussian distribution and is grounded in the celebrated \textit{Bernstein--von
Mises theorem}. The Bernstein--von
Mises theorem is often colloquially referred to as the Bayesian Central Limit Theorem, or BCLT; see e.g.\ the recent work of \citet{goehle}. Studies have shown the appealing empirical performance of the Laplace approximation, for
instance in the context of Bayesian neural networks \citep{daxberger}. 

However, approximation quality remains a concern. If users could check the quality of their approximation directly, with high confidence, they could better decide whether to trust any conclusions based on the approximation. And if the approximation could not be certified, the user might invest further computational power into a more expensive approximation. Notably, the BCLT on its own cannot be expected to provide such a check. For instance, convergence in the BCLT is normally
expressed in terms of the total variation distance, and indeed the Laplace
approximation is typically justified by the fact that the total variation
distance between the rescaled posterior and the Gaussian vanishes in the limit.
The total variation distance, however, does not control the difference of means
or the difference of covariances in general. But it is exactly posterior means and (co)variances that are most often reported by users of approximate Bayesian inference.
We therefore need finite-sample guarantees on the
quality of Laplace approximation, expressed in terms of metrics that control the error in mean
and covariance approximation.

%\begin{itemize}
%\item Practitioners, such as:
%\begin{itemize}
%	\item social scientists who want to know if taking a prep course helps in geting into college
%	\item physicists who want to know the mass of a black hole
%\end{itemize}
%need good point estimates and uncertainties.
%\item Bayesian inference provides those but the posterior is often intractable.
%\item Laplace approximation, grounded in the Bayesian Central Limit Theorem (BCLT), gives a fast and practical approximation
%of the posterior by a Gaussian distribution.
%\item Laplace approximation is popular in many fields and empirical studies have shown its good performance, for instance in the context of Bayesian neural networks \cite{daxberger}.
%\item But BCLT is expressed only asymptotically and only in terms of total variation (TV) distance:
%\begin{itemize}
%	\item We need finite-sample bounds on the quality of this approximation.
%	\item And to understand the quality of posterior mean and variance estimates, we need bounds on alternative divergences.
%\end{itemize}
%\end{itemize}
\subsection{Our contribution}

Our work provides an important step forward toward achieving fully practical and ready-to-use quality guarantees.
Namely, our work provides the first bounds for the Laplace approximation that simultaneously
\begin{itemize}
    \item do not require knowing the true parameter in advance,
    \item apply to finite data sets (i.e., not just in the limit of infinite data),
    \item control the difference in means and the difference in covariances,
    \item do not require the posterior to be log concave.
\end{itemize}
We achieve control over difference in means by controlling the 1-Wasserstein distance, and we achieve control over the difference in covariances via another integral probability metric.
We are also able to control the total variance distance. Moreover, our bounds are closed form. 

More specifically, let $n$ be the sample size and $d$ be the dimension. Let $\map$ denote the maximum a posteriori (MAP) estimate. Let $\postparam$ denote a random variable distributed according to the posterior. Let $\postfisherinformation(\map)$ denote the \textit{posterior empirical Fisher information at the MAP}. Let $\nu^{\pi}:=\mathcal{L}\left(\sqrt{n}\left(\postparam - \map\right)\right)$ (where $\mathcal{L}$ denotes the law) and $\nu^{\mathcal{N}}:=\mathcal{N}(0,\postfisherinformation(\map)^{-1})$. Our main bounds, presented in full rigor in \Cref{s:main}, may be (very informally and in a very simplified way) described as taking the following form:
\begin{align*}
&TV\left(
        \nu^{\pi},\nu^{\mathcal{N}}
    \right)\leq{}
    H_d d n^{-1/2} +
    \exp\left(-\overline{H}_d\left(\sqrt{n}-\sqrt{d}\right)^2\right)        +
    \hat{H}_dn^{d/2}e^{-n\kappabar},\\
    &W_1\left(
        \nu^{\pi},\nu^{\mathcal{N}}
    \right)\leq{}
    H_d d n^{-1/2} +
    \sqrt{n}\exp\left(-\overline{H}_d\left(\sqrt{n}-\sqrt{d}\right)^2\right)        +
    \hat{H}_dn^{d/2+1/2}e^{-n\kappabar},\\
    &D\left(
        \nu^{\pi},\nu^{\mathcal{N}}
    \right)\leq{}
    H_d d^{3/2} n^{-1/2} +
    n\exp\left(-\overline{H}_d\left(\sqrt{n}-\sqrt{d}\right)^2\right)        +
    \hat{H}_dn^{d/2+1}e^{-n\kappabar},
\end{align*}
for appropriate model-dependent factors $H_d,\overline{H}_d,\hat{H}_d,\bar{\kappa}$. Here $TV$ is the total variation distance, $W_1$ is the 1-Wasserstein distance, and $D$ is an appropriate distance that controls the difference of covariances. All the constants and factors in our bounds are explicit and presented in full detail in \Cref{s:main}.

The sample-size and dimension dependence of our bounds on the total-variation and the $1$-Wasserstein distance are tight and such that they \textit{cannot be improved in general}, as we discuss in \Cref{dimension_dependence}. Our bounds' dimension and sample-size dependence are also \textit{strictly better} than those appearing in all the previous works on the Laplace approximation, despite our results holding under weaker assumptions than those of the previous works.

We highlight that our bounds do not require access to the true parameter or exact integrals with respect to the posterior, which would be unrealistic for practical checks. Rather, our bounds are expressed in terms of the data and work under any distribution of the data; importantly, our bounds apply when the model is misspecified, as we expect to be the case in practice. Our results are fully applicable to models involving generalized likelihoods and the resulting generalized posteriors; see \cite{bissiri, miller, chernozhukov}.
Our assumptions on the generalized likelihood and the prior are standard and no stronger than the assumptions of the classical proofs of the Bernstein--von Mises Theorem; see e.g.\ \citet[Section 1.4]{ghosh} or \citet{miller} for details. We compute our bounds explicitly for a variety of Bayesian models, including logistic regression with a Student's t prior.

Our contribution lies also in our proof techniques. In order to control the discrepancy inside a ball around the maximum likelihood estimator (MLE) or the maximum a posteriori (MAP) estimator, we use the log-Sobolev inequality or Stein's method. In order to control the discrepancy over the rest of the parameter space, we carefully bound the tail growth using standard assumptions of the Bernstein--von Mises Theorem.

A number of important directions remain for future work. Though beyond the scope of the present paper, we believe that our approach to proving computable non-asymptotic bounds could be extended so as to cover more general statistical models satisfying the conditions of the local asymptotic normality (LAN) theory \citep[Chapters 1--3]{statistical_estimation}. We also note that, while our experiments demonstrate that we can use our bounds to make non-vacuous conclusions about real-data analyses, exact computation of our bounds requires analytical derivations and a series of numerical optimization procedures. Their current instantiations can become onerous as models increase in complexity and size. We believe our current work, though, points to fruitful directions to continue building these tools.

\subsection{Related work} \label{related_work}
\small
% Do not edit directly!  

% This table is aautomatically generated from the CSV here
% https://docs.google.com/spreadsheets/d/1_JApR-DArzilOTFAHu0I5yiw6Kzib7yCqz4EXs2Cq34/edit?usp=sharing
% using the R script bclt_references_summary.R

\makeatletter
\newcommand*{\hyperlinkcite}[1]{\hyper@link{cite}{cite.#1}}%\hyperlinkcite takes 2 arguments: #1<- cite-key, #2<- link-text
\makeatother

\begin{table}

\caption{\label{tab:related_results}Very brief summary of related results.  See section \ref{related_work} for full details.}
\centering
\begin{tabular}[t]
{c|c|c|c|c|c|c|c}
%{p{3cm}|p{1.4cm} | p{0.8cm}|p{0.8cm}|p{0.8cm}|p{0.8cm}|p{0.8cm}|p{0.8cm}}
\toprule
 & \makecell{Present \\ work} & \makecell{\hyperlinkcite{spokoiny_panov}{PS} \\ \hyperlinkcite{spokoiny_panov}{15}} & \makecell{\hyperlinkcite{helin}{HK} \\ \hyperlinkcite{helin}{22}} & \makecell{\hyperlinkcite{huggins}{H+} \\ \hyperlinkcite{huggins}{18}} & \makecell{\hyperlinkcite{dehaene}{D} \\ \hyperlinkcite{dehaene}{19}} & \makecell{\hyperlinkcite{spokoiny_laplace}{S} \\ \hyperlinkcite{spokoiny_laplace}{22}} & \makecell{\hyperlinkcite{yano_kato}{YK} \\ \hyperlinkcite{yano_kato}{20}} \\
\midrule
\cellcolor{gray!6}{True parameter not needed} & \cellcolor{gray!6}{\checkmark} & \cellcolor{gray!6}{--} & \cellcolor{gray!6}{ \checkmark} & \cellcolor{gray!6}{ \checkmark} & \cellcolor{gray!6}{\checkmark} & \cellcolor{gray!6}{ \checkmark} & \cellcolor{gray!6}{--}\\
Global log-concavity not needed & \checkmark&  \checkmark & -- & -- & -- & -- & \checkmark\\
\cellcolor{gray!6}{Generic priors / likelihoods} & \cellcolor{gray!6}{\checkmark} & \cellcolor{gray!6}{--} & \cellcolor{gray!6}{--} & \cellcolor{gray!6}{ \checkmark} & \cellcolor{gray!6}{\checkmark} & \cellcolor{gray!6}{--} & \cellcolor{gray!6}{--}\\
Explicit bounds (no order notation) & \checkmark & \checkmark & \checkmark & \checkmark & -- & \checkmark & \checkmark\\
\cellcolor{gray!6}{Controls TV distance} & \cellcolor{gray!6}{\checkmark} & \cellcolor{gray!6}{\checkmark} & \cellcolor{gray!6}{\checkmark} & \cellcolor{gray!6}{--} & \cellcolor{gray!6}{\checkmark} & \cellcolor{gray!6}{\checkmark} & \cellcolor{gray!6}{\checkmark}\\
Controls means and variances & \checkmark & \checkmark & -- & \checkmark & -- & -- & --\\
\bottomrule
\end{tabular}
\end{table}
\normalsize
A number of recent papers have studied the non-asymptotic properties of the Laplace approximation and the Bernstein--von Mises theorem for Bayesian posteriors.
We next contrast the present work with these analyses in terms of the computability of the bounds,
strength of the results, and restrictiveness of the assumptions.
A brief summary of the differences can be found in \Cref{tab:related_results}.

Recently, \citet[Section 6.1]{huggins}, \citet{dehaene}, and
\citet{spokoiny_laplace} have offered ways of obtaining guarantees on the quality
of Laplace approximation under log-concavity of the posterior. In Proposition 6.1,
\citet{huggins} assume that the posterior is
strongly log-concave and obtain a computable bound on the 1- and 2-Wasserstein
distances between the posterior and the approximating Gaussian. In Proposition 6.2,
\citet{huggins} relax the assumption on the
posterior to weak log-concavity, but the bound they obtain is not computable in
practice, for finite data. \citet{dehaene} assumes weak, yet
strict, log-concavity of the posterior. In the paper's main result (Theorem 5), \citet{dehaene} obtains a bound on the Kullback-Leibler (KL) divergence between
the posterior and the Gaussian approximation. In this bound, only the leading
terms are computable, while the higher order terms are presented using the big-O
notation. 
% Furthermore, the KL divergence upper-bounded in \cite[Theorem
% 5]{dehaene} is known not to control the difference of means or the difference of
% variances in general (see \cite[Propositions 3.1 and 3.2]{huggins1}). 
\citet{spokoiny_laplace} assumes a Gaussian prior and a
log-concave likelihood, which yields a strongly log-concave posterior. The author
relaxes this assumption only in a very specific case of a nonlinear inverse
problem with a certain ``warm start condition.'' The bounds of \citet{spokoiny_laplace} are on the total
variation distance. The author also provides a bound on the difference of means -- yet
this bound involves generic abstract constants and it is not clear to us whether
it is computable in applications. \citet{schillings} have proved a
qualitative, asymptotic result about convergence of the Laplace approximation
for inverse problems. Motivated by this work, \citet{helin} have provided
non-asymptotic bounds for the Laplace approximation on the total variation
distance -- in the context of Bayesian inverse problems and only under the
assumption that the posterior distribution is sub-Gaussian. Moreover, in
concurrent work, \citet{fisher} have provided bounds on the Gaussian approximation
of the posterior, yet only for i.i.d.\ data coming from a regular $k$-parameter
exponential family. The dimension dependence of the bounds of \citet{helin,spokoiny_laplace,dehaene,fisher} is worse than ours, as we describe in \Cref{dimension_dependence}. Additionally, the recent paper by \citet{hasenpflug} proved
interesting \textit{asymptotic} convergence results for Bayesian posteriors in
setups in which the MAP estimator may not be not unique. We also
mention two very recent pieces of work \citep{katsevich,katsevich1}, which
appeared after the first version of the present article was put on ArXiv. \citet{katsevich} studies the leading order contribution to the total variation distance
between the posterior and the Laplace approximation and thus gives
\textit{necessary} conditions under which the Laplace approximation is valid. For the sufficient conditions, \citet{katsevich} refers the reader to the ArXiv version of the present article, in particular to our \Cref{main_map,thm:wasserstein_map}.
\citet{katsevich1} proposes a skew adjustment to the Laplace approximation so as to
improve its quality in high dimensions.

Non-asymptotic analyses of the Bernstein--von Mises (BvM) Theorem (discussed in  \Cref{sec:bvm}) have previously been performed by \citet{spokoiny_panov, yano_kato}. The aim and focus of those analyses are however significantly different from those of the present paper, as we concentrate on the quality of the Laplace approximation (of the form of \Cref{eq:bvm3}, as discussed in \Cref{sec:bvm}). \citet{spokoiny_panov} consider semiparametric inference and prove results whose purpose it is to provide insight into the \textit{critical dimension} of the parameter for which the BvM Theorem holds. 
% In contrast, we work with parametric inference and our goal is to provide computable guarantees on the quality of Laplace approximation We aim for our results to be available to practitioners who approximate an intractable posterior by a Gaussian law and wish to know how good this approximation is.
They prove their bounds only for the non-informative and the Gaussian prior.
% , our results hold for a variety of priors satisfying mild assumptions.
Computing their bounds requires knowledge of the true parameter, which is the object of inference.
Additionally, 
\citet{yano_kato} derive a Berry--Esseen-type bound (which depends on the true
parameter value) on the total variation distance between the posterior and the
approximating normal in the approximately linear regression model. 
% Because the focus of our work is on the quality of the Laplace approximation, we apply techniques that yield bounds computable without access to the (unknown) true parameter and valid for a wide variety of models going beyond (approximate) linear regression. Moreover, as already mentioned, we control not only the total variation distance but also other metrics that are important for applications.

In contrast to the references mentioned above, our bounds hold and may be computed without access to the true parameter, for general posteriors
satisfying assumptions analogous to the classical assumptions of the
Bernstein--von Mises theorem (see e.g.\ \citealt[Section 4]{miller} for a recent
reference or \citealt[Section 1.4]{ghosh} for a more classical one). In particular,
we do not require (weak or strong) log-concavity or sub-Gaussianity of the
posterior. Indeed,  we compute our bounds explicitly for examples of commonly
used non-log-concave and heavy-tailed posteriors in \Cref{applications}. The
reason we can avoid imposing the assumption of log-concavity is that all we need
to control the tail behaviour of the posterior is the assumption of the strict
optimality of the MLE or MAP (\Cref{assump_kappa} or \Cref{assump_kappa1}). This requirement
is much weaker than assuming log-concavity or strong unimodality of the
posterior. The array of priors our bounds are available for is also much wider
than just the Gaussian family. All we require is differentiability, boundedness,
and boundedness away from zero of the prior density in a small neighbourhood
around the MAP or the MLE. We do not make any assumption about the true
distribution of the data, and we cover generalized likelihoods not coming from
any particular family. Moreover, we bound a variety of divergences including
those that control means and variances.

Finally, it is worth noting that bounds similar to ours are not widely available for approximate Bayesian inference techniques in general. Indeed, they are not available for variational inference (VI) methods \citep{blei, wainwright}, except for the recently studied case of Gaussian VI \citep{katsevich_rigollet}. In the area of Markov Chain Monte Carlo methods, progress has recently been made on deriving convergence guarantees for the Unadjusted Langevin Algorithm under different sets of assumptions on the tail growth of the target distribution \citep{erdogdu1, erdogdu2, erdogdu3}. However, the popular Metropolis-adjusted Langevin Algorithm and Hamiltonian Monte Carlo are not equipped with such guarantees beyond the case of log-concave targets. Flexible and computable post hoc checks measuring a discrepancy between the empirical distribution of a sample and the target distribution are given by graph and kernel Stein discrepancies. Graph Stein discrepancies \citep{mackey1, mackey2} metrize weak convergence and control the difference of means for distantly dissipative targets. They are however not known to control the difference of variances. Graph diffusion Stein discrepancies \citep{mackey2} possess the same properties under a slightly weaker (yet technical) assumption of the underlying diffusion having a rapid Wasserstein decay rate. The fast and popular kernel Stein discrepancies \citep{chwialkowski, liu, oates, mackey3} metrize weak convergence for certain choices of kernels and for distantly-dissipative targets. In comparison, we derive control over the rate of weak convergence and the differences of means and variances under interpretable assumptions that do not require any particular tail behavior of the target posterior.

\subsection{Structure of the paper}
In \Cref{section_notation} we describe our setup, introduce the necessary notation, and present our assumptions. We also give examples of popular models satisfying the assumptions. In \Cref{s:main} we present and discuss our main bounds. In \Cref{discussion} we provide a comprehensive discussion of the main results and their dependence on the sample size, dimension, and data, as well as computability. \Cref{sed:proof_techniques} discusses our proof techniques. In \Cref{further_results} we present results analogous to those of \Cref{s:main}, yet focused on different types of approximations.
In \Cref{applications} we show how to compute our bounds for the (non-log-concave) posterior in the logistic regression model with Student's t prior. We also present plots of our bounds computed numerically in this case. Moreover, we numerically compare our control over the difference of means and the difference of variances to the ground truth for certain conjugate prior models. In \Cref{conclusions} we present conclusions of our work. The proofs of the results of \Cref{s:main,further_results} are postponed to the appendices. 

\section{Setup, assumptions and notation}\label{section_notation}
\subsection{Setup}
Our setup and assumptions are similar to those found for instance in \cite{miller}. We fix $n\in\mathbbm{N}$ and study probability measures on $\mathbbm{R}^d$ having Lebesgue densities of the form:
\begin{align}\label{post_density}
	\posterior(\param)=e^{\loglikelihood(\param)}\prior(\param)/z_n,
\end{align}
where $\pi:\mathbbm{R}^d\to\mathbbm{R}$ is a Lebesgue probability density function, $L_n:\mathbbm{R}^d\to\mathbbm{R}$ and $z_n\in\mathbbm{R}_+$ is the normalizing constant. 
% \todo{Isn't this just the normalizing constant?
	% Are there multiple suitable normalizing constants?}
Throughout the paper, we call $\prior$ \textit{the prior density} (or simply \textit{the prior}), $\loglikelihood$ \textit{the generalized log-likelihood}, and $\posterior$ \textit{the generalized posterior}.
By $$\logposterior(\param):=\log\left(\posterior(\param)\right)$$ we denote the \textit{generalized log-posterior} and let:
\begin{align*}
	\mle:=\argmax_{\param\in\mathbbm{R}^d}\loglikelihood(\param),\qquad \map:=\argmax_{\param\in\mathbbm{R}^d}\logposterior(\param),
\end{align*}
whenever those quantities exist. 
If those quantities are unique, we call $\mle$ the \textit{maximum likelihood estimator} (\textit{MLE}) and $\map$ the \textit{maximum a posteriori} (\textit{MAP}) estimator. Here and elsewhere, overbars will refer to quantities
derived from the MAP ($\jbar$, $\deltabar$, $\mbar{2}$, etc., introduced below) and hats will refer
to quantities derived from the MLE (for instance $\jhat$, introduced below).
For any twice-differentiable function $f:\mathbbm{R}^d\to\mathbbm{R}$, we let $f'$ stand for its gradient and $f''$ for its Hessian. We shall write
\begin{align*}
	\fisherinformation(\param)=-\frac{\loglikelihood''(\param)}{n},\qquad 	\postfisherinformation(\param)=-\frac{\logposterior''(\param)}{n},
\end{align*}
whenever those expressions make sense (i.e.\ when the Hessians exist). For any three times differentiable function $f:\mathbbm{R}^d\to\mathbbm{R}$, we will also write $f'''$ for its third (Fr\'echet) derivative, defined as the following multilinear $3$-form on $\mathbbm{R}^d$:
$$f'''(\param)[u,v,w]=\sum_{i,j,k=1}^d\frac{\partial f}{\partial \param_i\partial\param_j\partial\param_k}(\param)u_iv_jw_k.
$$
The norm $\|\cdot\|^*$ of this third derivative will be defined in the following way:
$$\|f'''(\param)\|^*:=\underset{\|u\|\leq 1,\|v\|\leq 1,\|w\|\leq 1}{\sup}\left|f'''(\param)[u,v,w]\right|.$$
Throughout the paper, $\|\cdot\|$ denotes the Euclidean norm.
We will also let $\minevaluemle$ be the minimal eigenvalue of $\fisherinformation(\mle)$ and $\minevaluemap$ be the minimal eigenvalue of $\postfisherinformation(\map)$.
Moreover, throughout the paper  $\|\cdot\|_{op}$ will denote the operator (i.e. spectral) norm, $\left<\cdot,\cdot\right>$ will be the Euclidean inner product and $\postparam$ will always denote a random variable distributed according to the generalized posterior measure with density given by \cref{post_density}. $\mathcal{N}(\mu,\Sigma)$ will denote the normal distribution with mean $\mu$ and covariance $\Sigma$, and the function $\mathcal{L}(\cdot)$ will return the law of its argument. $I_{d\times d}$ will always denote the $d$-dimensional identity matrix.

Our bounds will be derived for two types of approximations. The first type is what we call the \textit{MAP-centric approach}. Within this approach, $\mathcal{L}(\sqrt{n}(\postparam-\map))$  is approximated by $\mathcal{N}(0,\postfisherinformation(\map)^{-1})$. On the other hand, what we call the \textit{MLE-centric approach} is the approximation of $\mathcal{L}(\sqrt{n}(\postparam-\mle))$ by  $\mathcal{N}(0,\fisherinformation(\mle)^{-1})$. The \textit{MLE-centric} approach is similar to the classical Bernstein--von Mises Theorem (see \Cref{sec:bvm} for details). The approximation it yields is a Gaussian distribution whose parameters depend only on the likelihood while ignoring the prior completely. The \textit{MAP-centric} approach is arguably more popular and standard among practitioners using the Laplace approximation. The approximating Gaussian's parameters depend on the posterior, thus depending on the prior as well. Thus, this approximation may be expected to be more accurate for many commonly used models. We derive our results for both approaches in order to make them applicable in a variety of circumstances. Indeed, our bounds can be used by practitioners who are interested in assessing the applicability of one of those approximation approaches. At the same time, we believe they are also interesting for researchers who study theoretical aspects of Bayesian Central Limit Theorems. 

The bounds we obtain are on the following distances:
\begin{enumerate}
	\item The Total Variation (TV) distance, which, for two probability measures $\nu_1$ and $\nu_2$ on a measurable space $(\Omega,\mathcal{F})$ is defined by $$TV(\nu_1,\nu_2):=\sup_{A\in\mathcal{F}} \left|\nu_1(A)-\nu_2(A)\right|.$$
	\item The 1-Wasserstein distance, which, for probability measures $\nu_1$ and $\nu_2$ and the set $\Gamma(\nu_1,\nu_2)$ of all couplings between them, is defined by
	$$W_1(\nu_1,\nu_2):=\inf_{\gamma\in \Gamma(\nu_1,\nu_2)}\int\|x-y\|d\gamma(x,y).$$
	Kantorovich duality (see, e.g. \citealt[Theorem 5.10]{villani}) provides an equivalent definition. Let $\|\cdot\|_L$ return the Lipschitz constant of the input. Then
	$$W_1(\nu_1,\nu_2)=\underset{\|f\|_L=1}{\sup_{f\text{ Lipschitz}:}}\left|\mathbbm{E}_{\nu_1}f - \mathbbm{E}_{\nu_2}f\right|.$$
	\item The following integral probability metric, which for $Y_1\sim\nu_1$ and $Y_2\sim\nu_2$ is defined by
	$$\sup_{v:\|v\|\leq 1}\left|\mathbbm{E}\left<v,Y_1\right>^2-\mathbbm{E}\left<v,Y_2\right>^2\right|.$$
	For zero-mean $\nu_1$ and $\nu_2$, this integral probability metric controls the operator norm of the difference of the covariance matrices of $\nu_1$ and $\nu_2$. For more general $\nu_1$ and $\nu_2$, it may be combined with the 1-Wasserstein distance in order to provide control over the operator norm of the difference between the covariance matrices of $\nu_1$ and $\nu_2$. Our proof techniques allow us to upper-bound integral probability metrics. Obtaining a bound on the above integral probability metric lets us reach our goal of controlling the difference between the covariance of the posterior and that of the Laplace approximation.
\end{enumerate}

\subsection{Assumptions made throughout the paper}
Now we list the assumptions that we will need to prove our finite-sample bounds and define constants used therein. We will first present those assumptions that will stand for both approaches described above and then others, which are divided between those relevant for the MAP-centric approach (\Cref{map-centric}) and the MLE-centric approach (\Cref{mle-centric}). We reiterate that the conditions we require are similar to the classical assumptions of the Bernstein--von Mises theorem, as given in \citet[Section 1.4]{ghosh} and \citet[Theorem 5]{miller}.
The first assumption that will be used throughout the paper is the following:
\begin{assumption}\label{assump1}
	There exists a unique MLE $\mle$. There also exists a real number $\deltamle>0$ such that the generalized log-likelihood $\loglikelihood$ is three times differentiable inside $\{\param:\|\param-\mle\|\leq \deltamle\} $. For the same $\deltamle>0$ there exists a real number $M_2>0$, such that:
	\begin{align}\label{taylor_condition}
		\underset{\|\param-\mle\|\leq \deltamle}{\sup}\frac{\|L_n'''(\param)\|^*}{n}\leq M_2.
	\end{align}
\end{assumption}
\begin{remark}
	\Cref{assump1} is needed to ensure that the posterior looks Gaussian inside the $\deltamle$-neighborhood around the MLE. Indeed, \Cref{assump1}, combined with Taylor's theorem, implies that $L_n(n^{-1/2}\theta+\mle)-L_n(\mle)$ can be approximated by $-\frac{1}{2}\theta^T\fisherinformation(\mle)\theta$ for $\theta$ in a neighborhood of $\mle$. Note that $-\frac{1}{2}\theta^T\fisherinformation(\mle)\theta$ is the logarithm of the $\mathcal{N}(0,\jhat)$ density (up to an additive constant).  When proving our bounds in the MLE-centric approach, we also use \Cref{assump1} to prove that the posterior satisfies the log-Sobolev inequality inside this neighborhood (see \Cref{appendix_c} for details).
\end{remark}
Moreover, we make the following assumption on the prior:
\begin{assumption}\label{assump_prior1}
	For the same $\deltamle>0$ as in \Cref{assump1}, there exists a real number $\hat{M}_1>0$, such that
	\begin{align*}
		\underset{\param:\|\param-\mle\|\leq\deltamle}{\sup}\left|\frac{1}{\prior(\param)}\right|\leq \hat{M}_1.
	\end{align*}
\end{assumption}
\begin{remark}
	Note that for \Cref{assump_prior1} to be satisfied, it suffices to assume that $\prior$ is continuous and positive in the $\deltamle$-ball around $\mle$. \Cref{assump_prior1} essentially ensures that the prior puts a non-negligible amount of mass in the $\deltamle$-neighbourhood of the MLE.
\end{remark}

\subsection{Additional assumptions in the MAP-centric approach}\label{map-centric}
In the MAP-centric approach we keep \Cref{assump1,assump_prior1} and additionally assume the following:
\begin{assumption}\label{assump2}
	There exists a unique MAP $\map$. There also exists a real number $\deltabar>0$, such that the log-prior, $\log\prior$, is three times differentiable inside $\{\param:\|\param -\map\|\leq \deltabar\}$. Moreover, for the same $\deltabar$, there exists a real number $\overline{M}_2>0$, such that
	\begin{align}\label{post_taylor_condition}
		\sup_{\param:\|\param-\map\|\leq\deltabar}\frac{\|\logposterior'''(\param)\|^*}{n}\leq \overline{M}_2.
	\end{align}
\end{assumption}
\begin{remark}
	\Cref{assump2} is very similar to \Cref{assump1}. The difference is that we now consider a ball around the MAP rather than the MLE, and we require additional differentiability of the prior density inside this ball. We will use \Cref{assump2} in the MAP-centric approach (together with \Cref{assump7}) to show that, inside the $\deltabar$-ball around the MAP, the posterior satisfies the log-Sobolev inequality.
	
	In the MAP-centric approach, we use both \Cref{assump2,assump1}. \Cref{assump1} will play an important role in the process of controlling the posterior in the region $\{\param:\|\param -\map\|> \deltabar\}$. More specifically, we will use \Cref{assump1} to lower-bound the normalizing constant of the posterior after controlling it with the integral $\int_{\|t-\mle\|\leq\deltamle}\prior(t)e^{\loglikelihood(t)-L_n(\mle)}dt$. See \Cref{sec:control_i2map} for more detail.
\end{remark}
\begin{assumption}\label{assump_size_of_delta_bar}
	For the same $\deltamle>0$ as in \Cref{assump1} and for the same $\deltabar$ as in \Cref{assump2},
	\begin{align*}
		\max\left\{\|\hat{\theta}_n-\bar{\theta}_n\|,\sqrt{\frac{\Tr\left[\postfisherinformation(\map)^{-1}\right]}{n}}\right\}<\deltabar\quad\text{and}\quad \sqrt{\frac{\Tr\left[\left(\jhat +\frac{\deltamle M_2}{3}I_{d\times d}\right)^{-1}\right]}{n}}< \deltamle.\end{align*}
\end{assumption}
\begin{remark}
	\Cref{assump_size_of_delta_bar} is an assumption on the size of $n$ and the choice of $\deltabar$ and $\deltamle$. Indeed, as long as the MLE and MAP converge to the same limit (which is true in the majority of commonly used modelling setups), we expect $\max\left\{\|\hat{\theta}_n-\bar{\theta}_n\|,\sqrt{\frac{\Tr\left(\postfisherinformation(\map)^{-1}\right)}{n}}\right\}$ to go to zero as $n\to\infty$. We similarly expect $\sqrt{\frac{\Tr\left[\left(\jhat +\frac{\deltamle M_2}{3}I_{d\times d}\right)^{-1}\right]}{n}}$ to go to zero as $n\to\infty$. Moreover $\sqrt{\frac{\Tr\left[\left(\jhat +\frac{\deltamle M_2}{3}I_{d\times d}\right)^{-1}\right]}{n}}<\deltamle$ will be satisfied if $\sqrt{\frac{\Tr\left(\fisherinformation(\mle)^{-1}\right)}{n}}<\deltamle$, which might be an easier condition to check.
	\Cref{assump_size_of_delta_bar} allows us to use appropriate Gaussian concentration inequalities in our proofs. Moreover, the assumption $\|\mle-\map\|<\deltabar$ is necessary for \Cref{assump_kappa1} below to be satisfied.
\end{remark}
\begin{assumption}\label{assump7}
	For the same $\deltabar>0$ and $\overline{M}_2>0$ as in \Cref{assump2},
	\begin{align}\label{inv_prior_cond}
		\minevaluemap> \deltabar\, \overline{M}_2.
	\end{align}
\end{assumption}
\begin{remark}
	If $\postfisherinformation(\map)$ is positive definite and \Cref{assump1} is satisfied then one can adjust the choice of $\deltabar$ so that both \cref{post_taylor_condition,inv_prior_cond} hold. Indeed, one can adjust the value of $\deltabar$ accordingly because decreasing the value of $\deltabar$ in \Cref{assump2} does not lead to an increase in the value of $\overline{M}_2$. At the same time, decreasing the value of $\deltabar$ in \Cref{assump7}, while keeping $\overline{M}_2$ fixed,  decreases the right-hand side of \cref{inv_prior_cond}.
	
	\Cref{assump7}, combined with \Cref{assump2} and with Taylor's theorem, will allow us to prove that the posterior is strongly log-concave inside the $\deltabar$-neighborhood around the MAP. As a result, we will be able to show that the posterior satisfies the log-Sobolev inequality inside this neighborhood.
\end{remark}
\begin{assumption}\label{assump_kappa1}
	For the same $\deltabar>0$ as in \Cref{assump2}, there exists $\kappabar>0$, such that
	\begin{align*}
		\underset{\param:\|\param-\mle\|>\deltabar-\|\mle-\map\|}{\sup}\frac{L_n(\param)-L_n(\hat{\param}_n)}{n}\leq -\kappabar.
	\end{align*}
\end{assumption}
\begin{remark}
	\Cref{assump_kappa1} ensures that any local maxima of $\loglikelihood$ achieved outside of the $\left(\deltabar-\|\mle-\map\|\right)$-ball around the MLE do not get arbitrarily close to the global maximum achieved at the MLE.
	It also ensures that the posterior puts asymptotically negligible mass outside the $\left(\deltabar-\|\mle-\map\|\right)$-neighborhood around the MLE.  As a result, only the \textit{locally Gaussian} part around the MLE remains as the sample size $n$ grows.
	Note that for the vast majority of commonly used parametric models and data generating distributions,  $\|\mle-\map\|$, which appears in the expression for the radius of the ball, will tend to $0$ as $n\to\infty$, a.s. Moreover, note that we do not strictly require that $\kappabar$ not depend on the sample size $n$. Under certain conditions, our bounds will converge to zero as $n\to\infty$ even if $\bar{\kappa}\xrightarrow{n\to\infty} 0$, as long as $\kappabar$ vanishes strictly slower than $\frac{d\log n}{n}$ (see \Cref{sec:sample_size_dep,dimension_dependence} for more discussion).
	
	This kind of assumption is standard in the discussion of the Bernstein--von Mises Theorem, both in classical references \citep{ghosh} and in more recent ones  \citep{miller}. Nevertheless, we note that there are references in which this assumption is replaced with certain weaker probabilistic separation conditions (such as uniformly consistent tests), see e.g. \cite{vandervaart}. However, it is not clear how to adapt similar conditions to our setup, in which we seek to obtain computable non-asymptotic bounds. Similarly, \cite{miller} remarked that it is already not clear how to use probabilistic separation conditions for  proving the asymptotic convergence of the posterior to Gaussianity when the studied convergence is almost sure, rather than in probability.
\end{remark}

\begin{remark}\label{remark_delta}
	In \Cref{assump_prior1,assump_size_of_delta_bar,assump7,assump_kappa1} we require the stated conditions to hold for the same $\deltamle$ as in \Cref{assump1} and for the same $\deltabar$ as in \Cref{assump2}. In practice, we may, however, verify those assumptions separately and, for each of them, find the ranges for $\deltamle>0$ and $\deltabar>0$ for which it holds. We may then set the values of $\deltamle>0$ and $\deltabar>0$ equal to (one of) the values of $\deltamle>0$ and $\deltabar>0$ for which all the \Cref{assump1,assump_prior1,assump2,assump_size_of_delta_bar,assump7,assump_kappa1} are satisfied.
\end{remark}

\subsection{Additional assumptions in the MLE-centric approach}\label{mle-centric}

Besides \Cref{assump1} and \Cref{assump_prior1}, in the MLE-centric approach, we have the following assumptions:
\begin{assumption}\label{assump_size_of_delta}
	For the same $\deltamle>0$ as in \Cref{assump1},
	\begin{align*}
		\sqrt{\frac{\Tr\left[\fisherinformation(\mle)^{-1}\right]}{n}}<\deltamle.
	\end{align*}
\end{assumption}
\begin{remark}
	\Cref{assump_size_of_delta} is an assumption on the size of $n$ and the choice of $\deltamle$. Indeed, for typical applications we expect $\sqrt{\frac{\Tr\left[\fisherinformation(\mle)^{-1}\right]}{n}}$ to go to zero as $n\to\infty$. This is a technical assumption, necessary to ensure that the Gaussian concentration inequalities we use in our proofs are valid.
\end{remark}

\begin{assumption}\label{assump_fisher}
	For the same $\deltamle>0$ and $M_2>0$ as in \Cref{assump1},
	\begin{align}
		\minevaluemle> \deltamle M_2.
	\end{align}
\end{assumption}
\begin{remark}
	\Cref{assump_fisher} is the analogue of \Cref{assump7} for the MLE-centric approach.
	Combined with \Cref{assump1} and with Taylor's theorem, this assumption will allow us to prove that the likelihood is strongly log-concave inside the $\deltamle$-neighborhood around the MLE. As a result, we will be able to show that the posterior satisfies the log-Sobolev inequality inside this neighborhood.
\end{remark}
\begin{assumption}\label{assump_kappa}
	For the same $\deltamle>0$, as in \Cref{assump1}, there exists $\kappamle>0$, such that
	\begin{align*}
		\underset{\param:\|\param-\mle\|>\deltamle}{\sup}\frac{L_n(\param)-L_n(\hat{\param}_n)}{n}\leq -\kappamle.
	\end{align*}
\end{assumption}
% \todo{I think \Cref{assump_kappa} should be almost sure}
\begin{remark}
	\Cref{assump_kappa} is similar to \Cref{assump_kappa1} and ensures that any local maxima of $\loglikelihood$ achieved outside of the $\deltamle$-ball around the MLE do not get arbitrarily close to the global maximum achieved at the MLE. In other words, this assumption ensures that the posterior puts asymptotically negligible mass outside the $\deltamle$-neighborhood around the MLE.
\end{remark}
\begin{assumption}\label{assump_prior2}
	For the same $\deltamle$ as in \Cref{assump1}, there exist real numbers $M_1>0$ and $\widetilde{M}_1>0$, such that
	\begin{align*}
		\underset{\param:\|\param-\mle\|\leq\deltamle}{\sup}\left\|\frac{\prior'(\param)}{\prior(\param)}\right\|\leq M_1 \qquad\text{and}\qquad\underset{\param:\|\param-\hat{\theta}_n\|\leq\deltamle}{\sup}\left|\prior(\param)\right|\leq \widetilde{M}_1.
	\end{align*}
\end{assumption}
\begin{remark}
	Note that for \Cref{assump_prior2} to be satisfied, it suffices that $\prior$ is continuously differentiable and positive inside the $\deltamle$-ball around $\mle$. \Cref{assump_prior2} is a technical assumption that we use in the MLE-centric approach when showing that the posterior satisfies the log-Sobolev inequality inside the $\deltamle$-ball around the MLE.
\end{remark}

\begin{remark}
	As in Remark \ref{remark_delta}, we note that one may first verify \Cref{assump1,assump_prior1,assump_kappa,assump_prior2,assump_fisher,assump_size_of_delta} separately and then set the value of $\deltamle>0$ equal to (one of) the values of $\deltamle>0$ for which all of those assumptions are satisfied.
\end{remark}

\subsection{Additional notation}\label{sec:additional_notation}
Certain quantities occur repeatedly in our bounds.  By giving these quantities
special symbols, we can express the bounds more compactly and readably.

First, we define a set of matrices closely related to $\jhat$ and $\jbar$.
\begin{align*}
\jhatplus :={}&
    \jhat + (\deltamle \m{2}/3)I_{d\times d} &
\jhatminus :={}&
    \jhat - (\deltamle \m{2}/3)I_{d\times d} \\
\jbarplus :={}&
    \jbar + (\deltabar\, \mbar{2}/3)
    I_{d\times d}  &
\jbarminus :={}&
    \jbar - (\deltabar \mbar{2}/3)
    I_{d\times d}.
\end{align*}
The superscript $p$ in those symbols refers to the \textit{plus} sign appearing in the definition of the symbol and the superscript $m$ refers to the \textit{minus} sign.
%
% \red{Mikolaj: when checking for correctness, note that}
% \begin{align*}
% %
% \jhatplus :={}& R_1^{-1} &
% \jhatminus :={}&  R_2^{-1} \\
% \jbarplus :={}& \bar{R}_1^{-1} &
% \jbarminus :={}& \bar{R}_2^{-1}.
% \end{align*}
%
Each of these matrices is positive definite by Assumptions \ref{assump7} and
\ref{assump_fisher}.  

We analogously define the minimum eigenvalues of each matrix in the preceding
display:
\begin{align*}
\minevalmleplus :={}&
    \left[\left\| \jhatplus^{-1} \right\|_{op}\right]^{-1} ; &
\minevalmleminus :={}&
    \left[\left\| \jhatminus^{-1} \right\|_{op}\right]^{-1} ;\\
\minevalmapplus :={}&
    \left[\left\| \jbarplus^{-1} \right\|_{op}\right]^{-1} ; &
\minevalmapminus :={}&
    \left[\left\| \jbarminus^{-1} \right\|_{op}\right]^{-1}.
\end{align*}

% \red{Mikolaj: when checking for correctness, note that}
% %
% \begin{align*}
% \minevalmleplus :={}&
%     \left[\left\| R_1 \right\|_{op}\right]^{-1} &
% \minevalmleminus :={}&
%     \left[\left\| R_2 \right\|_{op}\right]^{-1} \\
% \minevalmapplus :={}&
%     \left[\left\| \bar{R}_1 \right\|_{op}\right]^{-1} &
% \minevalmapminus :={}&
%     \left[\left\| \bar{R}_2 \right\|_{op}\right]^{-1}.
% \end{align*}

% Then we can define:
% %
% \begin{align*}
% \jbarinvtr :={}& \trinv{\postfisherinformation(\map)}\\
% \jbarminusinvtr :={}& \trinv{\jbarminus}\\
% \jbarplusinvtr :={}& \trinv{\jbarplus}\\
% \jhatinvtr :={}& \trinv{\fisherinformation(\map)}\\
% \jhatminusinvtr :={}& \trinv{\jhatminus}\\
% \jhatplusinvtr :={}& \trinv{\jhatplus}\\
% \end{align*}
% %
% %
% \begin{align*}
% \jbardet :={}& \det{\postfisherinformation(\map)}\\
% \jbarminusdet :={}& \det{\jbarminus}\\
% \jbarplusdet :={}& \det{\jbarplus}\\
% \jhatdet :={}& \det{\fisherinformation(\map)}\\
% \jhatminusdet :={}& \det{\jhatminus}\\
% \jhatplusdet :={}& \det{\jhatplus}\\
% \end{align*}
% %
%

%
Finally, we define a set of quantities of the following
form:
\begin{align*}
\decaybar :={}&
    \exp\left[-\frac{1}{2}
        \left(\deltabar\sqrt{n}-
        \sqrt{\jbarinvtr}\right)^2\minevaluemap
        \right] ;\\
% \decaybarminus :={}&
%     \exp\left[-\frac{1}{2}
%         \left(\deltabar\sqrt{n}-
%         \sqrt{\jbarminusinvtr}\right)^2\minevalmapminus
%         \right] \\
\decaybarplus :={}&
    \exp\left[-\frac{1}{2}
        \left(\deltabar\sqrt{n}-
        \sqrt{\jbarplusinvtr}\right)^2\minevalmapplus
        \right]; \\
\decayhat :={}&
    \exp\left[-\frac{1}{2}
        \left(\deltamle\sqrt{n}-
        \sqrt{\jhatinvtr}\right)^2\minevaluemle
        \right] ;\\
% \decayhatminus :={}&
%     \exp\left[-\frac{1}{2}
%         \left(\deltamle\sqrt{n}-
%         \sqrt{\jhatminusinvtr}\right)^2\minevalmleminus
%         \right] \\
\decayhatplus :={}&
    \exp\left[-\frac{1}{2}
        \left(\deltamle\sqrt{n}-
        \sqrt{\jhatplusinvtr}\right)^2\minevalmleplus
        \right].
\end{align*}
The preceding terms are labeled with a script ``D'' for ``decay,'' because they
decrease to zero exponentially in $n$ when all other quantities are held
constant.

% The quantities $\decaybarplus$, $\decaybarminus$,
% $\decayhatplus$, and $\decayhatminus$ refer to the corresponding expressions using the variants
% of $\jbar$ and $\jhat$ defined above.  For example,
% $\decaybarplus$ is the same as $\decaybar$
% but with $\jbarplus$ instead of $\jbar$
% and $\minevalmapplus$ instead of $\minevaluemap$.
% A complete set of definitions can be found in
% Appendix \ref{app:notation}.

% \textcolor{red}{Mikolaj: $\decayhatminus$ is never
% used.  I'm not sure whether that's a mistake on my
% part or if it makes sense to you.  Every other
% of these terms is used at least once.}

% \begin{align*}
% \decaybarminus :={}&
%     \exp\left[-\frac{1}{2}
%         \left(\deltabar\sqrt{n}-
%         \sqrt{\jbarminusinvtr}\right)^2\minevalmapminus
%         \right] \\
% \decaybarplus :={}&
%     \exp\left[-\frac{1}{2}
%         \left(\deltabar\sqrt{n}-
%         \sqrt{\jbarplusinvtr}\right)^2\minevalmapplus
%         \right] \\
% \decayhat :={}&
%     \exp\left[-\frac{1}{2}
%         \left(\delta\sqrt{n}-
%         \sqrt{\jhatinvtr}\right)^2\minevaluemle
%         \right] \\
% \decayhatminus :={}&
%     \exp\left[-\frac{1}{2}
%         \left(\delta\sqrt{n}-
%         \sqrt{\jhatminusinvtr}\right)^2\minevalmleminus
%         \right] \\
% \decayhatplus :={}&
%     \exp\left[-\frac{1}{2}
%         \left(\delta\sqrt{n}-
%         \sqrt{\jhatplusinvtr}\right)^2\minevalmleplus
%         \right] \\
% \end{align*}
% %

\subsection{Examples of popular models satisfying the assumptions}
%Although our results are non-asymptotic, it is useful to identify the sources of
%$n$ dependence, both for intuition and to connect our results with asymptotic
%Bayesian CLTs.
%
We now present results stating that, under standard regularity conditions, and
for large enough $n$, exponential families and generalized linear models satisfy
the assumptions listed above with constants that do not depend on $n$.

\subsubsection{Exponential families}

 We have the following result, whose proof can be found in Appendix \ref{proof_exponential}.
\begin{proposition}[cf. {\citealt[Theorem 12]{miller}}]
\label{prop_exponential}
    Consider an exponential family that is full, regular, nonempty, identifiable and in natural form. In particular, let this exponential family have density $q(y|\eta)=\exp\left(\eta^T
 s(y)-\alpha(\eta) \right)$ with respect to a sigma-finite Borel measure
 $\lambda$ on $\mathcal{Y}\subseteq\mathbbm{R}^k$, where
 $s:\mathcal{Y}\to\mathbbm{R}^d$, $\eta\in\mathbbm{R}^d$ and
 $\alpha(\eta)=\log\int_{\mathcal{Y}}\exp(\eta^Ts(y))\lambda(dy)$. Let
 $Q_{\eta}(E)=\int_Eq(y|\eta)\lambda(dy)$ and denote
 $\mathbbm{E}_{\eta}s(Y)=\int_{\mathcal{Y}}s(y)Q_{\eta}(dy)$. Let
 $\mathcal{E}:=\{\eta\in\mathbbm{R}^d\,:\,|\alpha(\eta)|<\infty\}$.
   Assume that $\mathcal{E}$ is open and nonempty, let the parameter space be
   given by $\Theta:=\mathcal{E}$ and assume that $\eta\mapsto Q_{\eta}$ is
   one-to-one. 
   
   Suppose $Y_1,Y_2,\dots\in\mathcal{Y}$ are i.i.d.\ random vectors,
   such that $\mathbbm{E}s(Y_i)=\mathbbm{E}_{\theta_0}s(Y)$ for some
   $\theta_0\in\Theta:=\mathcal{E}$. Let $L_n(\theta)=\sum_{i=1}^n \log
   q(Y_i|\theta)$. Then, almost surely, for large enough $n$, Assumptions \ref{assump1},
   \ref{assump_size_of_delta}, \ref{assump_fisher} and \ref{assump_kappa} are
   satisfied, with constants $\deltamle,M_2,\kappamle$ independent of
   $n$.

   If, in addition, the prior density $\pi$ does not depend on $n$ and is continuous and positive in a
   neighborhood around $\theta_0$ then, almost surely, for large enough $n$,  Assumption \ref{assump_prior1} is
   satisfied with for constant $\hat{M}_1$
   independent of $n$.

   If, in addition, the prior density $\pi$ is continuously differentiable in a
   neighborhood of $\theta_0$ then, almost surely, Assumption \ref{assump_prior2} is satisfied for large enough $n$, 
   with $M_1,\widetilde{M}_1$ independent of
   $n$.

   If, in addition, the prior density $\pi$ is thrice continuously
   differentiable on $\Theta$, then, almost surely, Assumptions \ref{assump2} --
   \ref{assump_kappa1} are satisfied for all large enough $n$, 
   with constants $\deltabar,\overline{M}_2,\kappabar$ independent of
   $n$.
\end{proposition}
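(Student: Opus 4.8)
The plan is to exploit the special structure of the natural exponential family, where the likelihood Hessians are deterministic and the log-likelihood is globally concave. Writing $S_n := \sum_{i=1}^n s(Y_i)$, we have $\loglikelihood(\theta) = \theta^\top S_n - n\alpha(\theta)$, so $\loglikelihood''(\theta) = -n\alpha''(\theta)$ and $\loglikelihood'''(\theta) = -n\alpha'''(\theta)$; in particular $\fisherinformation(\theta) = \alpha''(\theta)$ does not depend on the data and $\|\loglikelihood'''(\theta)\|^*/n = \|\alpha'''(\theta)\|^*$. Since the family is regular, $\alpha$ is smooth on the open set $\mathcal{E}$ with $\alpha''$ positive definite, so $\alpha'$ is a diffeomorphism onto the mean-parameter space and the MLE is $\mle = (\alpha')^{-1}(S_n/n)$ whenever $S_n/n$ lies in its image. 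By the strong law of large numbers $S_n/n \to \mathbb{E}\, s(Y) = \alpha'(\theta_0)$ almost surely, and continuity of $(\alpha')^{-1}$ gives $\mle \to \theta_0$ almost surely, while strict convexity of $\alpha$ gives uniqueness.

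First I would fix a compact ball $K := \{\theta : \|\theta - \theta_0\| \le 2r\} \subset \mathcal{E}$, set $M_2 := \sup_{\theta \in K}\|\alpha'''(\theta)\|^*$ (finite by continuity), and choose $\deltamle \in (0, r)$ small enough that $\deltamle M_2 < \tfrac12 \lambda_{\min}(\alpha''(\theta_0))$. For all large $n$ the consistency $\mle \to \theta_0$ forces the $\deltamle$-ball about $\mle$ into $K$, so \Cref{assump1} holds with this $M_2$. Because $\fisherinformation(\mle) = \alpha''(\mle) \to \alpha''(\theta_0)$, the trace $\Tr\left[\fisherinformation(\mle)^{-1}\right]$ converges to a finite constant, so $\sqrt{\Tr\left[\fisherinformation(\mle)^{-1}\right]/n} \to 0 < \deltamle$, giving \Cref{assump_size_of_delta}, while $\minevaluemle \to \lambda_{\min}(\alpha''(\theta_0)) > \deltamle M_2$ gives \Cref{assump_fisher}. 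The prior assumptions \Cref{assump_prior1,assump_prior2} reduce to continuity (resp.\ $C^1$-ness) and positivity of $\prior$ on the fixed compact $K$, which bound $1/\prior$, $\|\prior'/\prior\|$ and $\prior$ there uniformly in $n$.

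The crux is the separation condition \Cref{assump_kappa} (and its MAP analogue \Cref{assump_kappa1}), a statement over the unbounded exterior of a ball where uniform laws of large numbers do not directly apply. Here I would use global concavity: since $\theta \mapsto n^{-1}\loglikelihood(\theta)$ is concave with maximizer $\mle$, for any $\theta$ with $\|\theta - \mle\| > \deltamle$ the point where the segment $[\mle, \theta]$ meets the sphere $\{\|\cdot - \mle\| = \deltamle\}$ has at least as large a value, so
\[
\sup_{\|\theta - \mle\| > \deltamle} \frac{\loglikelihood(\theta) - \loglikelihood(\mle)}{n} = \max_{\|\theta - \mle\| = \deltamle} \frac{\loglikelihood(\theta) - \loglikelihood(\mle)}{n},
\]
a maximum over a compact sphere. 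On that sphere $n^{-1}\loglikelihood$ converges uniformly to the strictly concave limit $\ell(\theta) := \theta^\top\alpha'(\theta_0) - \alpha(\theta)$, whose unique maximizer is $\theta_0$; hence the right-hand side converges to $\max_{\|\theta - \theta_0\| = \deltamle}[\ell(\theta) - \ell(\theta_0)] < 0$, and for large $n$ it lies below a fixed negative constant $-\kappamle$. This reduction of the tail to a compact sphere is exactly what makes $\kappamle$ obtainable and independent of $n$.

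Finally, for the MAP-centric assumptions I would treat the log-prior as a vanishing perturbation of the normalized likelihood. Writing $\logposterior = \loglikelihood + \log\prior - \log z_n$, we get $\logposterior'''(\theta) = \loglikelihood'''(\theta) + (\log\prior)'''(\theta)$, so $\|\logposterior'''\|^*/n \le M_2 + n^{-1}\sup_K\|(\log\prior)'''\|^* \le M_2 + 1$ for large $n$ once $\prior \in C^3$, giving \Cref{assump2} with $\mbar{2} := M_2 + 1$. Because $n^{-1}\log\prior \to 0$ uniformly on $K$, the Hessian $-\alpha''(\theta) + n^{-1}(\log\prior)''(\theta)$ is negative definite near $\theta_0$ for large $n$, producing a unique local maximizer $\map$ there; combined with the steepness of $\alpha$ (which sends $\logposterior \to -\infty$ at the boundary of $\mathcal{E}$ and at infinity, dominating the prior) this maximizer is global and unique, with $\map \to \theta_0$ and hence $\|\mle - \map\| \to 0$. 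The remaining assumptions \Cref{assump_size_of_delta_bar,assump7,assump_kappa1} then follow as in the MLE case, with $\deltabar$ chosen like $\deltamle$ and the same concavity-to-sphere argument applied on the radius $\deltabar - \|\mle - \map\|$. I expect the genuine difficulties to be (i) the separation step above and (ii) establishing global existence and uniqueness of $\map$ when $\log\prior$ is not concave, both resolved by the concavity and steepness of the exponential-family log-likelihood.
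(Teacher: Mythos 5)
Your proposal is correct and takes essentially the same route as the paper's own proof: it exploits the exponential-family structure ($L_n'''=-n\alpha'''$, deterministic Hessians, global strict concavity), gets $M_2$, $\deltamle$ and the prior constants from compactness plus almost-sure consistency of $\mle$, obtains $\kappa$ and $\bar\kappa$ by reducing the tail supremum to a compact sphere via concavity and then using uniform convergence of $L_n/n$ to the strictly concave limit $f(\theta)=\theta^{T}\alpha'(\theta_0)-\alpha(\theta)$, and handles the MAP-centric assumptions by treating $n^{-1}\log\pi$ as a vanishing perturbation. The only cosmetic differences are that the paper invokes Miller's existence/consistency results for $\mle$ where you invert $\alpha'$ directly, and its sphere argument is centered at $\theta_0$ (radius $\deltamle/2$) rather than at the moving point $\mle$, with the paper's justification of a global MAP resting on upper-boundedness of $\pi$ where you invoke steepness.
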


\begin{remark}
Our Proposition \ref{prop_exponential} is very similar to \citet[Theorem 12]{miller}. The assumptions of Proposition \ref{prop_exponential} are, firstly, that the exponential family is full, regular, nonempty, identifiable and in natural form. Such conditions are standard and hold for typical commonly used exponential families \citep{miller_harrison}. Secondly, we assume that $\mathbbm{E}s(Y_i)=\mathbbm{E}_{\theta_0}s(Y)$ for some $\theta_0$, which is a standard assumption and  ensures that matching the expected sufficient statistics to the observed sufficient statistics is possible asymptotically.  Note that we do not assume that the model is correctly specified.
\end{remark}

\subsubsection{Generalized Linear Models}

We have the following result, whose proof can be found in Appendix \ref{proof_glm}.

\begin{proposition}[cf. {\citealt[Theorem 13]{miller}}]\label{prop_glm}
    Consider a regression model of the form $p(y_i|\theta,x_i)\propto_{\theta}
q(y_i|\theta^Tx_i)$ for covariates $x_i\in\mathcal{X}\subseteq\mathbbm{R}^d$
and coefficients $\theta\in \Theta\subseteq\mathbbm{R}^d$, where
$q(y|\eta)=\exp\left(\eta s(y)-\alpha(\eta)\right)$ is a one-parameter
exponential family, with respect to a sigma-finite Borel measure $\lambda$ on
$\mathcal{Y}\subseteq\mathbbm{R}^d$. Note that proportionality $\propto_{\theta}$ is with respect
to $\theta$, not $y_i$. 

Let $s:\mathcal{Y}\to\mathbbm{R}^d$,
$\eta\in\mathbbm{R}^d$ and
$\alpha(\eta)=\log\int_{\mathcal{Y}}\exp(\eta^Ts(y))\lambda(dy)$.  Moreover,
let $Q_{\eta}(E)=\int_Eq(y|\eta)\lambda(dy)$ and
$\mathcal{E}:=\{\eta\in\mathbbm{R}^d\,:\,|\alpha(\eta)|<\infty\}$.
   Assume $\Theta$ is open, $\Theta$ is convex, and $\theta^Tx\in\mathcal{E}$
   for all $\theta\in\Theta$, $x\in\mathcal{X}$. Moreover, assume $\mathcal{E}$
   is non-empty and open and $\eta\mapsto Q_{\eta}$ is one-to-one. Suppose
   $(X_1,Y_1),(X_2,Y_2),\dots\in\mathcal{X}\times\mathcal{Y}$ are i.i.d.\ such that:
   \begin{enumerate}
        \item $f'(\theta_0)=0$ for some $\theta_0\in\Theta$, where
        $f(\theta)=\mathbbm{E}\log q(Y_i\,|\,\theta^TX_i)$
        \item $\mathbbm{E}\left|X_is(Y_i)\right|<\infty$ and
        $\mathbbm{E}\left|\alpha(\theta^TX_i)\right|<\infty$ for all
        $\theta\in\Theta$,
        \item for all $a\in\mathbbm{R}^d$, if $a^TX_i\stackrel{a.s.}=0$ then
        $a=0$
       \item There is $\epsilon>0$ 
       such that for all $j,k,l\in\{1,\dots,d\}$, \sloppy
       $\mathbbm{E}\left[\underset{\theta:\|\theta-\theta_0\|\leq\epsilon}{\sup}
       \left|\alpha'''(\theta^TX_i)X_{ij}X_{ik}X_{il}\right|\right]<\infty.$
    %   \todo{Is $\bar{E}$ defined?  Do you mean the closure?  Is that notation
    %   defined? Why not just do $\|\theta-\theta_0\| \le \deltamle$?}
             %
\end{enumerate}
Let $L_n(\theta)=\sum_{i=1}^n \log
   p(Y_i|\theta,X_i)$. Then, almost surely, for all large enough $n$, Assumptions \ref{assump1},
\ref{assump_size_of_delta}, \ref{assump_fisher} and \ref{assump_kappa} are
satisfied with constants $\deltamle,M_2,\kappamle$ independent of
$n$.

If, in addition, the prior density $\pi$ does not depend on $n$ and is continuous and positive in a
neighborhood around $\theta_0$ then, almost surely, Assumption \ref{assump_prior1} is satisfied
for large enough $n$ and constant $\hat{M}_1$ independent of $n$.

If, in addition, the prior density $\pi$ is continuously differentiable in a
neighborhood of $\theta_0$ then, almost surely, Assumption \ref{assump_prior2} is satisfied
for large enough $n$ and $M_1,\widetilde{M}_1$ independent of $n$.

If, in addition, the prior density $\pi$ is thrice continuously
differentiable on $\Theta$, then, almost surely, Assumptions \ref{assump2} --
\ref{assump_kappa1} are satisfied for all large enough $n$ and
for constants $\deltabar,\overline{M}_2,\kappabar$ independent of
$n$.
\end{proposition}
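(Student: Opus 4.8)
The plan is to mirror the argument of \cite[Theorem 13]{miller}: first establish almost-sure consistency of the estimators together with uniform convergence of the normalized log-likelihood and its derivatives, and then verify each assumption by taking the radii $\deltamle,\deltabar$ small and $n$ large. Up to a $\theta$-independent additive constant, the per-observation log-likelihood is $(\theta^T X_i)\,s(Y_i)-\alpha(\theta^T X_i)$, so that
\begin{align*}
\loglikelihood'(\theta) &= \sum_{i=1}^n \bigl(s(Y_i)-\alpha'(\theta^T X_i)\bigr)X_i, \qquad
\loglikelihood''(\theta) = -\sum_{i=1}^n \alpha''(\theta^T X_i)\,X_iX_i^T, \\
\loglikelihood'''(\theta)[u,v,w] &= -\sum_{i=1}^n \alpha'''(\theta^T X_i)\,(u^T X_i)(v^T X_i)(w^T X_i).
\end{align*}
By condition (2) and the strong law of large numbers, $\tfrac1n\loglikelihood(\theta)\to f(\theta)$ a.s.\ pointwise; since $\alpha''>0$ (strict convexity of a non-degenerate one-parameter log-partition) and condition (3) makes $\mathbbm{E}[X_iX_i^T]$ full rank, $f$ is strictly concave, so condition (1) identifies $\theta_0$ as its unique maximizer and standard argmax consistency gives $\mle\to\theta_0$ a.s.

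Next I would control the local quantities. By condition (4) and the strong law, $\tfrac1n\sum_i\sup_{\|\theta-\theta_0\|\le\epsilon}|\alpha'''(\theta^T X_i)X_{ij}X_{ik}X_{il}|$ converges a.s.\ to a finite limit for each triple $(j,k,l)$, which bounds $\|\loglikelihood'''\|^*/n$ by a finite, $n$-independent $M_2$ on any ball of radius $\deltamle\le\epsilon$ around the (eventually nearby) MLE, giving \Cref{assump1}. The normalized Fisher information $\fisherinformation(\theta)=\tfrac1n\sum_i\alpha''(\theta^T X_i)X_iX_i^T$ converges a.s.\ to $J(\theta_0):=\mathbbm{E}[\alpha''(\theta_0^T X_i)X_iX_i^T]$, which is positive definite by condition (3) and $\alpha''>0$, so $\minevaluemle$ is eventually bounded below by a fixed positive constant; shrinking $\deltamle$ then secures $\minevaluemle>\deltamle M_2$ (\Cref{assump_fisher}), while $\trinv{\fisherinformation(\mle)}$ stays bounded so that $\sqrt{\trinv{\fisherinformation(\mle)}/n}\to0$ yields \Cref{assump_size_of_delta}. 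Once $\mle$ enters the neighborhood where $\prior$ is continuous, positive and continuously differentiable, the suprema of $1/\prior$, $\|\prior'/\prior\|$ and $\prior$ over the $\deltamle$-ball are finite and $n$-independent, giving \Cref{assump_prior1,assump_prior2}.

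The main obstacle is the separation \Cref{assump_kappa}, since it constrains the global tail of the likelihood and the local Taylor arguments say nothing outside the ball. Here I would upgrade pointwise to uniform convergence of $\tfrac1n\loglikelihood$ on compact sets (a Glivenko--Cantelli / bracketing argument using the integrability in condition (2)), combine it with the strict concavity of $f$ to obtain a uniform gap on a compact annulus $\{\delta\le\|\theta-\theta_0\|\le R\}$, and, when $\Theta$ is unbounded, add a tail estimate: condition (3) forces $\theta^T X_i\to\pm\infty$ in directions of nonzero $a$, so the growth of $\alpha$ drives $f(\theta)-f(\theta_0)$ below a fixed negative level as $\|\theta\|\to\infty$. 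Together these produce an $n$-independent gap $-\kappamle$; I expect to reuse the corresponding steps of \cite[Theorem 13]{miller} essentially verbatim.

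Finally, the MAP-centric assumptions \Cref{assump2,assump_size_of_delta_bar,assump7,assump_kappa1} follow by the same route after observing that $\logposterior=\loglikelihood+\log\prior-\log z_n$ differs from $\loglikelihood$ only through the fixed, thrice-differentiable term $\log\prior$ (the constant $\log z_n$ drops out of all derivatives). Hence $\postfisherinformation(\theta)=\fisherinformation(\theta)-\tfrac1n(\log\prior)''(\theta)\to J(\theta_0)$ and $\|\logposterior'''\|^*/n\le\|\loglikelihood'''\|^*/n+O(1/n)$, so $\map\to\theta_0$, $\minevaluemap\to\lambda_{\min}(J(\theta_0))$, and $\overline{M}_2$ may be taken arbitrarily close to $M_2$; choosing $\deltabar$ small then gives \Cref{assump2,assump7}, the vanishing of $\sqrt{\trinv{\postfisherinformation(\map)}/n}$ and of $\|\mle-\map\|$ gives \Cref{assump_size_of_delta_bar}, and \Cref{assump_kappa1} inherits the likelihood tail bound above because its ball radius $\deltabar-\|\mle-\map\|$ converges to $\deltabar$.
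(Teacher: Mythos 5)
Most of your outline coincides with the paper's proof: the derivative formulas, bounding $\|\loglikelihood'''\|^*/n$ via condition (4) and the law of large numbers, positive definiteness of the limiting Fisher information from condition (3), the prior assumptions once $\mle\to\theta_0$ is in hand, and the observation that $\log\prior$ perturbs the normalized derivatives of $\logposterior$ only at order $1/n$ in the MAP-centric part. The genuine gap is in your treatment of the separation conditions (\Cref{assump_kappa,assump_kappa1}). Your plan --- uniform convergence of $\loglikelihood/n$ on a compact annulus plus a tail estimate asserting that the growth of $\alpha$ drives $f(\theta)-f(\theta_0)$ below a fixed negative level as $\|\theta\|\to\infty$ --- only controls the \emph{limit} $f$ at infinity. \Cref{assump_kappa} is a statement about $\sup_{\|\theta-\mle\|>\deltamle}\,(\loglikelihood(\theta)-\loglikelihood(\mle))/n$ for each large finite $n$, over an unbounded region; neither a Glivenko--Cantelli argument (which lives on compact sets) nor any property of $f$ alone transfers to that region, so as written the argument does not close.

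The paper closes it by using strict concavity of $\loglikelihood$ \emph{itself}, not of $f$: for $\theta$ with $\|\theta-\theta_0\|>\deltamle/2$, let $\xi_\theta$ be the point where the segment from $\theta_0$ to $\theta$ meets the sphere $\{t:\|t-\theta_0\|=\deltamle/2\}$; concavity gives $\frac{\loglikelihood(\theta)-\loglikelihood(\theta_0)}{n}\le\frac{\|\theta-\theta_0\|}{\deltamle/2}\cdot\frac{\loglikelihood(\xi_\theta)-\loglikelihood(\theta_0)}{n}$, and since the latter factor is eventually nonpositive, this is at most $\sup_{\|t-\theta_0\|=\deltamle/2}\frac{\loglikelihood(t)-\loglikelihood(\theta_0)}{n}$. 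Thus the supremum over the entire exterior region is reduced to a single compact sphere, where uniform convergence holds automatically (pointwise almost-sure convergence of concave functions is uniform on compacts, so no bracketing argument is needed), and $\sup_{\|t-\theta_0\|=\deltamle/2}(f(t)-f(\theta_0))<0$ delivers an $n$-independent $\kappamle$; the same computation with radius $\deltabar-\|\mle-\map\|$ gives \Cref{assump_kappa1}. Your annulus-plus-tail decomposition is therefore both incomplete at infinity and unnecessary once concavity of $\loglikelihood$ rather than of $f$ is exploited. A secondary omission: \Cref{assump1} requires the MLE to \emph{exist} and be unique; argmax consistency presupposes that maximizers exist, which can fail for every finite $n$ (e.g.\ separable data in logistic regression). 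The paper obtains almost-sure existence for large $n$ from the cited results of Miller, with uniqueness following from strict concavity of $\loglikelihood$.
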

\begin{remark}
Our Proposition \ref{prop_glm} is very similar to \citet[Theorem 13]{miller}. Condition 1 in Proposition \ref{prop_glm} says essentially that the MLE exists asymptotically. Conditions 2 and 4 are moment conditions. They will be satisfied in many situations --- for instance if the covariates are bounded and $\mathbbm{E}s(Y_i)$ exists (since $\alpha\in C^{\infty}$). Condition 3 is necessary to ensure identifiability. When $\mathbbm{E}X_iX_i^T$ exists and is finite, condition 3 is equivalent to $\mathbbm{E}X_iX_i^T$ being non-singular, which is often assumed to ensure identifiability for GLMs \cite[Example 16.8]{vandervaart}. Note that Proposition \ref{prop_glm} does not assume that the model is correctly specified.
\end{remark}

\section{Main results}\label{s:main}

In this section we present our bounds on the quality of Laplace approximation in
the \textit{MAP-centric} approach, as described in
\Cref{section_notation}. This approach is arguably the most popular one among
users of Laplace approximation (e.g. \citealt[Section 4.4]{bishop}, \citealt[Sections 4.6.8.2 and 10.5.1]{murphy}). The proofs of the results from this section are
presented in \Cref{introductory_arguments,appendix_b}. A
discussion of our bounds can be found in \Cref{discussion} below. In
particular, \Cref{discussion} discusses the dependence on the sample size
$n$ and dimension $d$ of our bounds, their dependence on the data, their
computability and the way in which they control credible sets, means and
variances. Our bounds in the  \textit{MLE-centric} approach will be presented in \Cref{further_results}.

\subsection{Control over the total variation distance}
We start with a bound over the total variation distance.
\MainMapThm{}

\subsection{Control over the 1-Wasserstein distance}
Now, we bound the 1-Wasserstein distance, which is known to control the difference of means.
\WassertsteinMAPThm{}
\begin{remark}
	Note that, in order for the bound in \Cref{thm:wasserstein_map} to be finite, we need the following integral with respect to the prior to be finite: $\int_{\|u-\mle\|>\deltabar-\|\mle-\map\|}\|u-\map\|\pi(u)du$. 
\end{remark}
\subsection{Control over the difference of covariances}
Finally, we upper bound an integral probability metric that lets us control the difference of covariances.
\WTwoMAPThm{}
\begin{remark}
	Note that, in order for the bound in \Cref{w2_map} to be finite, we need the following integral with respect to the prior to be finite: $\int_{\|u-\mle\|>\deltabar-\|\mle-\map\|}\|u-\map\|^2\pi(u)du$.
\end{remark}

\section{Discussion of the bounds}\label{discussion}
We make some remarks about the bounds presented in \Cref{s:main} and their applicability in approximate inference. 
\subsection{Dependence on the sample size $n$}\label{sec:sample_size_dep}
The quantities $A_1$, $A_2$, $A_3$, $B_1$, $B_2$, $B_3$, $B_4$, $C_1$,
$C_2$, $C_3$, and $C_4$ appearing in the above bounds depend on $n$ but, for
models and data generating distributions that satisfy the assumptions of the
Bernstein--von Mises theorem (as in \citealt[Theorem 1.4.2]{ghosh} or \citealt[Theorem 5]{miller}), they are
bounded as $n$ grows. In particular, they are bounded in $n$, as long as the
constants $M_2,\overline{M}_2,\hat{M}_1$ are bounded from above and
$\deltabar, \deltamle$ are bounded from below by a positive
number. In this scenario, keeping the dimension $d$ fixed, all our bounds will vanish as $n\to\infty$
at the rate of $\frac{1}{\sqrt{n}}$, as long as asymptotically the ratio of $\frac{d+3}{2}\cdot\frac{\log n}{n}$ to $\kappabar$ goes to zero. See \cref{map_poisson,kappa_bar_weibull,calculating_kappa} for a discussion of the choice of $\kappabar$ in our experiments. We reiterate that our bounds are fully non-asymptotic and
computable.

In the numerical examples in \Cref{applications} below, we observed that our bounds are often coarse for small and moderate sample sizes. The main reason for this looseness at small and moderate sample sizes can be understood by looking at our assumptions. Indeed, in the cases where $\minevaluemap$ is very small, \Cref{assump7} pushes $\deltabar$ to also become very small. A small $\deltabar$ means, in turn, that we need to choose a very small $\kappabar$ to make \Cref{assump_kappa1} satisfied. Subsequently, a very small $\kappabar$, combined with a small or moderate sample size $n$, results in the bound in \Cref{main_map} being dominated by the third summand $A_2n^{d/2}e^{-n\kappabar}$, the bound in \Cref{thm:wasserstein_map} being dominated by $B_3(B_2+\sqrt{n}B_4)n^{d/2}e^{-n\kappabar}$ and the bound in \Cref{w2_map} being dominated by $C_3(C_2+nC_4)n^{d/2}e^{-n\kappabar}$. As a consequence, we need a large sample size $n$ in order to start seeing the asymptotic convergence rate of our bounds. 

\citet[Section 2.2]{katsevich_bvm} notes that our bounds are not affine invariant. Indeed, passing the parameter of interest through the following transformation $\theta\mapsto \postfisherinformation(\map)^{1/2}(\theta-\map)$ and applying our analysis to the transformed parameter would change the value of our bounds. In particular, such a transformation makes the negative Hessian of the generalized log posterior, divided by $n$ and evaluated at zero (the new mode of the posterior), equal to the identity matrix. Importantly, this means that, after the transformation, the left-hand side of  \Cref{inv_prior_cond} becomes equal to $1$, irrespective of the dimension $d$ and one can thus have a much better control over the range of $\deltabar$'s that satisfy \Cref{assump7}. We conjecture that such a transformation thus leads in many cases to an improvement in the tightness of our bounds for smaller sample sizes $n$. A detailed analysis of the consequences of applying the transformation proposed in \citet[Section 2.2]{katsevich_bvm} is, however, beyond the scope of the present paper. We refer the interested reader to \citet{katsevich_bvm} and, more specifically, to Section 2.2 and Appendix A.3 therein. 

We hypothesized that the looseness of our bounds might be a function of the condition number of $\postfisherinformation(\map)$ (defined as the ratio of the largest eigenvalue of $\postfisherinformation(\map)$ and the smallest eigenvalue $\minevaluemap$). In our experiments, we investigated this hypothesis for logistic regression with a Student's t prior. We found that while it may be the case that a smaller condition number is associated with a smaller bound, the relationship is not definitive. For instance, in \Cref{fig:logistic_condition_mean,fig:logistic_condition_covariance,fig:logistic_condition_number}, we can see both very large bound values and very small bound values for the same condition number; see \cref{sec:are_bounds_smaller} for more discussion. We leave a more detailed investigation for future work.

\subsection{Dependence on the dimension $d$}\label{dimension_dependence} Several
terms in our bounds depend on $d$ and the exact dependence differs depending
on the analyzed model. This is not surprising; the fact that the dimension
dependence of the Laplace approximation is model-dependent has already been
observed by other authors, including \cite{katsevich}. Nevertheless, as
observed in \citet{katsevich}, which was written after the first version of
the current paper appeared on ArXiv, the dimension dependence in our
total-variation and 1-Wasserstein bounds is such that it cannot be improved
in general. It is also better than the dimension dependence appearing in the
previous works on this topic by \cite{helin,spokoiny_laplace,dehaene,fisher}. Below,
we elaborate more on our dimension dependence and on the comparison to the
dependence other authors have achieved in the past.

Let us take a closer look at our bound on the total variation distance
presented in \Cref{main_map}. Our bound consists of three summands. The
first summand is of the same order as
$\frac{\Tr\left[\postfisherinformation(\map)^{-1}\right]\overline{M}_2}{\sqrt{n}\cdot\sqrt{\minevaluemap-\overline{M}_2\deltabar}}$
as $d$ and $n$ grow. The second summand is of a lower order, provided, for
instance, that $\deltabar\gg\frac{\sqrt{\log n}}{\sqrt{n\,\minevaluemap}}$.
This condition is reasonable to expect as for models satisfying the
assumptions of the Bernstein--von Mises theorem \citep[Theorem 1.4.2]{ghosh},
$\deltabar$ is required to be positive and constant in $n$. The third
summand is of the same order as
$\hat{M}_1\left|\text{det}\left(\jhatplus\right)\right|^{1/2}n^{d/2}e^{-n\kappabar}$.
Therefore, if $\kappabar\gg \frac{\log
	n}{n}\cdot\frac{d+1}{2}+\frac{1}{n}\log\left(\hat{M}_1\left|\text{det}\left(\jhatplus\right)\right|^{1/2}\right)$
and $\deltabar\gg\frac{\sqrt{\log n}}{\sqrt{n\,\minevaluemap}}$, then the
first summand is of the leading order as $n$ and $d$ grow. 

Let us then look at the dimension dependence of the first summand in
\Cref{main_map} through the lens of the recent analysis presented by
\citet{katsevich} and thus make a comparison to the previous work. Our first
summand in \Cref{main_map} is of the same order as $C_d\sqrt{d^2/n}$, where
\begin{align*}
	C_d = 
	\frac{\Tr\left[\postfisherinformation(\map)^{-1}\right]}{d}\cdot
	\frac{\overline{M}_2}
	{\sqrt{\minevaluemap-\overline{M}_2\deltabar}} \leq
	\frac{\overline{M}_2}{\minevaluemap\sqrt{\minevaluemap-\overline{M}_2\deltabar}},
\end{align*}
where we recall from \Cref{assump2} that $\overline{M}_2$ controls the
third derivative of the log posterior.
As noted by \cite{katsevich}, all the recent works by
\cite{helin,spokoiny_laplace,dehaene} have obtained finite-sample bounds on
the total variation distance of order $c_d\sqrt{d^3/n}$, where $c_d$ is a
ratio of the third derivative of the log posterior and the $(3/2)$-power of
the second derivative of the log-posterior, with the definition varying
slightly from paper to paper\footnote{To be precise, we note that the bound
	appearing in \cite{spokoiny_laplace} is actually expressed in terms of the
	\textit{effective dimension} $d_{\text{eff}}$, which may sometimes be
	smaller than the actual dimension $d$ and depends on the strength of
	regularization by a Gaussian prior. The effective dimension $d_{\text{eff}}$
	however approaches the true dimension $d$ as the sample size $n$ goes to
	infinity, as long as the prior does not depend on $n$. For such a prior, the
	asymptotic order of the total-variation-distance bound in
	\cite{spokoiny_laplace} is still $c_d\sqrt{d^3/n}$.} (while the bounds provided by \citealt{fisher} are of a higher order).  As described in
\Cref{related_work}, all the works of \citet{helin,spokoiny_laplace,dehaene,fisher}
obtain their bounds under assumptions significantly stronger than ours.
Still, our bound offers a tighter dimension dependence as it is of order
$C_d\sqrt{d^2/n}$, where $C_d$ again represents the ratio of a bound on the
third derivative of the log posterior and the $(3/2)$-power of the second
derivative of the log-posterior.

As noted above, \cite{katsevich} refers specifically to our
paper and states that the order of our bound cannot be improved in general.
Indeed, \citet[Theorem V1]{katsevich} states that the condition
$\bar{C}_dd\ll \sqrt{n}$ is necessary for accurate Laplace approximation,
where $\bar{C}_d$ is a model-specific term involving ratios of derivatives
of the log-posterior. We refer the interested reader to \citet{katsevich} for
a more precise statement of this result and a more thorough discussion.

In \Cref{example_1}, we discuss a popular model which satisfies the
assumptions of \cite{spokoiny_laplace}  and for which our bound on the
total variation distance is of smaller order than the one of
\cite{spokoiny_laplace}. 
\begin{example}\label{example_1}
	Let us consider the following example, inspired by \citet[Section 3]{katsevich} and discussed in more detail in \Cref{app_logistic_example}. Suppose that $X_i\stackrel{\text{i.i.d.}}\sim\mathcal{N}(0,I_{d\times d})$, $\tilde{Y}_i\,|\,X_i\sim \text{Bernoulli}\left(s(\param_0^TX_i)\right)$ and 
	\begin{align*}
		Y_i=\begin{cases}
			1,&\text{if }\tilde{Y}_i=1\\
			-1, &\text{if }\tilde{Y}_i=0,
		\end{cases}
	\end{align*}
	\sloppy where $s$ is the sigmoid $s(t)=(1+e^{-t})^{-1}$ and $\param_0$ is the ground truth value of the parameter. For simplicity, take $\param_0=(1,0,\dots,0)$. Now, let $\rho(t)=-\log s(t)$ and take $\loglikelihood(\param)=-\sum_{i=1}^n\rho\left(Y_iX_i^T\param\right)$, which corresponds to logistic regression. Consider a standard Gaussian prior on $\param$ given by $\pi(\param)\propto \exp(-\|\param\|^2/2)$ and assume $d^{3/2}\leq n\leq e^{\sqrt{d}}.$ This model satisfies the assumptions of \Cref{main_map} with high probability for sufficiently large $n$ and the first summand in the bound in \Cref{main_map} is of the leading order. See \Cref{app_logistic_example_assumptions} for a discussion of those facts.
	
	In \citet{spokoiny_laplace}, bounds are proved on the quality of the Laplace approximation for models in which the likelihood is log-concave and the prior is Gaussian. The model we consider here in this example (logistic regression with a Gaussian prior) satisfies these conditions as well as the additional local smoothness conditions imposed in \cite{spokoiny_laplace}. \cite{spokoiny_laplace} notes in their paper that their bounds on the total-variation distance from the Laplace approximation converge to zero only if $\sqrt{\frac{(d_{eff})^3}{n}}\to 0$. For logistic regression with a Gaussian prior, as we consider it here in this example, we have that:
	\begin{align*}
		d_{eff}:=&\Tr{\left(\jbar+\frac{(\log\pi)''(\map)}{n}\right)\jbar^{-1}}\geq d\left(1- \frac{1}{n \,\minevaluemap}\right),
	\end{align*}
	and $\minevaluemap$ is lower-bounded by a positive constant not depending on $n$ or $d$ with high probability (see \Cref{lower_bound_eff_dim} for more detail). Therefore $\frac{d_{eff}}{d}\xrightarrow{n\to\infty}1$ in probability. As we show in \Cref{example_bound}, our bound on the
	total variation distance is therefore of a smaller order than the one of
	\cite{spokoiny_laplace}; our bound is of order $\sqrt{\frac{d^2}{n}}$ with high
	probability. Moreover, we note that \citet[Section 3]{katsevich} has shown
	numerically that this rate is optimal for logistic regression. 
\end{example}

    Furthermore, we note that in \Cref{thm:wasserstein_map}, the first summand (which will be the leading-order term in the majority of standard modeling setups) in our $1$-Wasserstein bound is of the order of $\bar{C}_d\sqrt{d^2/n}$, where 
    \begin{align*}
\bar{C}_d=\frac{\Tr\left[\postfisherinformation(\map)^{-1}\right]\overline{M}_2}{d\cdot\left(\minevaluemap-\overline{M}_2\deltabar\right)}\leq \frac{\overline{M}_2}{\minevaluemap\left(\minevaluemap-\overline{M}_2\deltabar\right)}.
    \end{align*}
    Moreover, in the bound of \Cref{w2_map}, the first summand is of the order of $\tilde{C}_d\sqrt{d^3/n}$, where
    \begin{equation*}
    \tilde{C}_d\leq \frac{\overline{M}_2}{\minevaluemap^{3/2}\left(\minevaluemap-\overline{M}_2\deltabar\right)},
    \end{equation*}
and the second summand is of the order $\hat{C}_d\,d^2/n$, where
\begin{align*}
    \hat{C}_d\leq \frac{\overline{M}_2}{\minevaluemap^{2}\left(\minevaluemap-\overline{M}_2\deltabar\right)^2}
\end{align*}
and those two summands form together the term that will be of the leading order in the majority of standard modelling setups. 

In \Cref{sed:proof_techniques} we explain how our proof techniques allowed us to obtain the bounds of order $C_d\sqrt{d^2/n}$ and  $\bar{C}_d\sqrt{d^2/n}$ in \Cref{main_map,thm:wasserstein_map}, thus achieving a sharper dimension dependence as compared to all earlier works. In \Cref{sed:proof_techniques} we also explain why our covariance error bound in \Cref{w2_map} is of the order of $\tilde{C}_d\sqrt{d^3/n}$ and why we could not achieve the order of $\tilde{C}_d\sqrt{d^2/n}$ in \Cref{w2_map} using our techniques.

\subsection{Dependence on the data}
In the typical applications, the bounds in \Cref{main_map,thm:wasserstein_map,w2_map} depend on the data. For a practitioner, such dependence on data is very natural because the bound is computable using the data and therefore usable in practice.
However, for a theoretical statistician, interested in Bernstein--von Mises phenomena, it might be interesting to assume the data come from a certain distribution. For instance, researchers investigating frequentist properties of Bayesian estimators may make such an assumption in order to compare the coverage of Bayesian credible sets with that of frequentist confidence sets.  It might be interesting to make such an assumption for instance in order to investigate how fast the coverage of Bayesian credible sets approaches the coverage of frequentist confidence sets as $n\to\infty$.
Having made such an assumption, one can use our bounds in order to quantify the speed of almost sure convergence or convergence in probability of the distances between the prior and the Gaussian. To this end, one only needs to control the speed of the relevant mode of convergence of our bounds, which, in most cases, should be achievable using standard results, similar to those quantifying the rate of convergence in the law of large numbers.

% \todo{These sorts of remarks are the kind of thing I was suggesting we do for each term systematically, so that the reader can see
	% how every key component of the theorem varies in $J$, $n$, and $\delta$.  It's very possible it's all here, but it takes a lot of
	% flipping back and forth now to see it.}

\subsection{Computability}

Our bounds are computable from the constants and quantities
introduced in \Cref{section_notation}, such as $\hat{M}_1$, $M_2$,
$\overline{M}_2$, $\deltabar$, $\kappabar$, $\mle$, $\map$,
$\postfisherinformation(\map)$, $\fisherinformation(\mle)$. Computing some of
those constants and quantities, including the bounds on the third derivatives of the log-likelihood and
log-posterior given by $M_2$ and $\overline{M}_2$, may require additional work. It
is often possible to obtain such bounds analytically. Sharper bounds can
typically be obtained numerically. A robust approach to doing so is to obtain an
analytical bound on the fourth derivative and then run a grid-search optimizer
and apply the mean value theorem. Running a grid-search optimizer can, however, be very expensive
computationally, especially in high dimensions. Another option is to run a
simpler global optimizer. This is much faster but can
occasionally return results that are inaccurate or incorrect. Indeed, a global numerical optimizer might output a value smaller than the maximum, rather than producing the desired upper bound. There is,
therefore, a statistical--computational trade-off that needs to be taken
into consideration when calculating our bounds in practice, on real data sets. Moreover, there is typically a range of values for $\deltamle$ and $\deltabar$ for which the assumptions of \Cref{thm:wasserstein_map,w2_map,main_map} are satisfied. The user may find the optimal choice of $\deltamle$ and $\deltabar$ within the appropriate ranges by running a numerical optimizer on the bounds. We describe how we compute our bounds in practice in \cref{sec:logistic_setup} and \cref{calculations_poisson,calculations_weibull}.
We present our
bounds computed for several example Bayesian models in
\Cref{applications}.

\subsection{Our bounds control the quality of credible-set approximations}
Our bound on the total variation distance provides quality guarantees on the approximate computation of posterior credible sets. Indeed, suppose, for instance, that one is interested in finding a value $b_{\alpha}$, such that
$\mathbbm{P}\left(\|\postparam-\map\|\leq \frac{b_{\alpha}}{\sqrt{n}}\right)\geq1-\alpha$,
for a fixed value $\alpha$, where the probability $\mathbbm{P}$ is \textit{conditional on the observed data}. Let $A(n)$ denote the value of our upper bound in \Cref{main_map}.  If $n$ is sufficiently large and $A(n)$ is smaller that $\alpha$, then one could choose $b_{\alpha}=\tilde{b}_{\alpha}$, such that, for $\boldsymbol{Z}_n\sim\mathcal{N}(0,\postfisherinformation(\map)^{-1})$ ,
$\mathbbm{P}\left(\|\boldsymbol{Z}_n\|\leq \tilde{b}_{\alpha}\right)= 1-\alpha+A(n).$
Our bound implies that:
\begin{align*}
	\left|\mathbbm{P}\left(\|\postparam-\map\|\leq \frac{\tilde{b}_{\alpha}}{\sqrt{n}}\right) - \mathbbm{P}\left(\|\boldsymbol{Z}_n\|\leq \tilde{b}_{\alpha}\right)\right|\leq A(n)
\end{align*}
and so
$\mathbbm{P}\left(\|\postparam-\map\|\leq \frac{\tilde{b}_{\alpha}}{\sqrt{n}}\right)\geq1-\alpha.$

\subsection{Our bounds control the difference of means}
Our bound on the 1-Wasserstein distance controls the difference of means in the Laplace approximation, in the following way. The upper bound in \Cref{thm:wasserstein_map} controls $\sqrt{n}\|\mathbbm{E}[\postparam] - \map\|$. In order to obtain an upper bound on $\|\mathbbm{E}[\postparam] - \map\|$, one needs to divide our bound from \Cref{thm:wasserstein_map} by $\sqrt{n}$.

\subsection{Our bounds control the difference of covariances}
\Cref{w2_map} together with \Cref{thm:wasserstein_map} let us control the difference of covariances. Suppose, for instance, that we are interested in the operator norm of the difference of the posterior covariance matrix and the covariance matrix of the Laplace approximation. Let $B(n)$ denote the value of our bound from \Cref{thm:wasserstein_map} and  let $C(n)$ be the value of our bound from \Cref{w2_map}. Then,
% 	for $Z_n\sim\mathcal{N}\left(0,\postfisherinformation(\map)^{-1}\right)$
% \begin{align*}
	% &\left\|\text{Cov}(\postparam)-\frac{\postfisherinformation(\map)^{-1}}{n}\right\|_{op}\\
	% =&\frac{1}{n}\sup_{v:\|v\|\leq 1}\left|\mathbbm{E}\left<v,\sqrt{n}\left(\postparam-\mathbbm{E}\postparam\right)\right>^2-\mathbbm{E}\left<v,Z_n\right>^2\right|\\
	% \leq&\frac{1}{n}\sup_{v:\|v\|\leq 1}\left|\mathbbm{E}\left<v,\sqrt{n}\left(\postparam-\map\right)\right>^2-\mathbbm{E}\left<v,Z_n\right>^2\right|\\
	% &+\frac{1}{n}\sup_{v:\|v\|\leq 1}\left|\mathbbm{E}\left<v,\sqrt{n}\left(\postparam-\map\right)\right>^2-\mathbbm{E}\left<v,\sqrt{n}\left(\postparam-\mathbbm{E}\postparam\right)\right>^2\right|.
	% \end{align*}
% Now, note that
% \begin{align*}
	% &\sup_{v:\|v\|\leq 1}\left|\mathbbm{E}\left<v,\sqrt{n}\left(\postparam-\map\right)\right>^2-\mathbbm{E}\left<v,\sqrt{n}\left(\postparam-\mathbbm{E}\postparam\right)\right>^2\right|\\
	% =&\sup_{v:\|v\|\leq 1}\left|\mathbbm{E}\left[\left<v,\sqrt{n}\left(\mathbbm{E}\postparam-\map\right)\right>\left<v, \sqrt{n}\left(2\postparam-\mathbbm{E}\postparam-\map\right)\right] \right>\right|\\
	% =&\sup_{v:\|v\|\leq 1}\left|\left<v,\sqrt{n}\left(\mathbbm{E}\postparam-\map\right)\right>\left<v, \sqrt{n}\left(2\mathbbm{E}\postparam-\mathbbm{E}\postparam-\map\right)\right>\right|\\
	% \leq &n\|\mathbbm{E}\postparam - \map\|^2\\
	% \leq & B(n)^2.
	% \end{align*}
% Therefore,
a straightforward calculation reveals that:
\begin{align*}
	\left\|\text{Cov}(\postparam)-\frac{\postfisherinformation(\map)^{-1}}{n}\right\|_{op}\leq \frac{1}{n}\left(B(n)^2+C(n)\right).
\end{align*}

\section{Our proof techniques}\label{sed:proof_techniques}

\def\postmeas{\mu_{n}^{\pi}}
\def\normmeas{\mu_{n}^{\mathcal{N}}}
\def\postdens{f_{n}^{\pi}}
\def\normdens{f_{n}^{\mathcal{N}}}
Now, we briefly describe our proof strategies. We discuss how we prove our theory for the MAP-centric approach (in \cref{main_map,thm:wasserstein_map,w2_map} above), the MLE-centric approach (in \cref{theorem_main,theorem_1wasserstein,theorem_2wasserstein} below), and additional theory for general test functions in one dimension (in \cref{theorem_univariate} in the appendix).

All the proofs of our results are provided in full detail in \cref{introductory_arguments,appendix_b,appendix_c,appendix_d}.

\subsection{The MAP-Centric Approach}
In our proofs for the MAP-centric approach, we compare the distribution of
$\sqrt{n}\left(\postparam-\map\right)$ to
$\mathcal{N}\left(0,\postfisherinformation(\map)^{-1}\right)$ by concentrating
separately on the region $\{\param:\|\param\|\leq\deltabar\sqrt{n}\}$ and the
region $\{\param:\|\param\|>\deltabar\sqrt{n}\}$.

In order to compare the rescaled posterior with the Gaussian in the outer region
$\{\param:\|\param\|>\sqrt{n}\deltabar\}$, we simply upper bound tail
integrals with respect to the posterior and with respect to the Gaussian.  We
use \cref{assump_kappa1}, together with Gaussian concentration
inequalities, which let us upper-bound integrals with respect to the posterior
with integrals with respect to the prior, multiplied by $n^{d/2}e^{-n\bar{\kappa}}$
and suitable constants. Integrals with respect to the Gaussian are upper-bounded
using Gaussian concentration inequalities.

With the tail integrals controlled, we can focus on the two distributions
truncated to the region $\{\param:\|\param\| \le \sqrt{n}\deltabar\}$.  As we
will describe in more detail shortly, we prove our main results, 
\cref{main_map,thm:wasserstein_map,w2_map} by (1) controlling the Fisher
divergence using a Taylor series expansion, (2) controlling the Kullback-Leibler (KL) divergence
using strong log-concavity in the inner region together with the log-Sobolev
inequality, and (3) controlling the total variation and Wasserstein distances
in terms of the KL divergence using Pinsker's inequality and the
transportation-entropy (Talagrand) inequality.

We use the following notation to represent the truncated versions of the two
probability measures. First, for any probability measure $\mu$ and measurable set $A$, let $[\mu]_A$ denote $\mu$ truncated to set $A$. $[\mu]_A$ is constructed by restricting $\mu$ to set $A$ and then renormalizing it so that $[\mu]_A$ is a well-defined probability measure.  Now, let $B_0\left(\deltabar\sqrt{n}\right) :=
\{\param:\|\param\|\leq\deltabar\sqrt{n}\}$ denote the inner region,
and define the truncated posterior and normal measures respectively as
$\postmeas:=\left[\mathcal{L}\left(\sqrt{n}\left(\postparam-\map\right)\right)\right]_{B_0\left(\deltabar\sqrt{n}\right)} $
and
$\normmeas=\left[\mathcal{N}\left(0,\postfisherinformation(\map)^{-1}\right)\right]_{B_0\left(\deltabar\sqrt{n}\right)}$,
with densities with respect to the Lebesgue measure given respectively by
$\postdens$ and $\normdens$.
By Taylor expanding $(\log \postdens)'$ around $\map$ and applying 
\cref{assump2} we can
successfully control the following \textit{Fisher divergence} between the two truncated
distributions:
\begin{align}\label{fisher_control}
\text{Fisher}\left(\normmeas\|\postmeas\right):=\int%_{\{t:\|t\|\leq\deltabar\sqrt{n}\}}
\left\|\left( \log \postdens\right)'(t)-\left( \log
\normdens\right)'(t)\right\|^2 \normmeas(dt)
\leq\frac{3\left(\Tr\left[\jbar^{-1}\right]\overline{M}_2\right)^2}{4n}.
\end{align}
%

%
% Indeed, by Assumption \ref{assump2} and Taylor's theorem, for all $t\in \{\param:\|\param\|\leq\sqrt{n}\deltabar\}$ there exists a constant $c\in[0,1]$, such that
% \begin{align*}
% \left(\log \postdens\right)'(t)=\frac{\overline{L}_n'(n^{-1/2}t+\overline{\theta}_n)}{\sqrt{n}}=&\frac{\overline{L}_n'(\overline{\theta}_n)}{\sqrt{n}}+\frac{\overline{L}_n''(\overline{\theta}_n)t}{n}+\frac{\overline{L}_n'''(\overline{\theta}_n+c n^{-1/2}t)[t,t]}{2n^{3/2}}\\
% =&0-\bar{J}(\bar{\theta}_n)t+\frac{1}{2\sqrt{n}}\frac{\overline{L}_n'''(\overline{\theta}_n+c n^{-1/2}t)[t,t]}{n}\\
% =&\left( \log\normdens\right)'(t)+\frac{1}{2\sqrt{n}}\frac{\overline{L}_n'''(\overline{\theta}_n+cn^{-1/2}t)[t,t]}{n},
% \end{align*}
% and it follows that $$\sup_{\|t\|\leq\sqrt{n}\deltabar }\|(\log \postdens)'(t)-(\log \normdens)'(t)\|\leq \frac{\overline{M}_2}{2\sqrt{n}}.$$
\sloppy Now, by \cref{assump2,assump7}, combined with
Taylor's theorem, we show that the density of $\postmeas$ is
$\left(\minevaluemap-\deltabar\overline{M}_2\right)$-strongly log-concave on $B_0\left(\deltabar\sqrt{n}\right)$
(i.e. the logarithm of its density is a strongly concave function). The
celebrated \textit{Bakry-\'Emery} criterion  \citep{bakry-emery} (see also
\citealt[Theorem A1]{Schlichting:2019}) says that any strongly log-concave measure
satisfies the \textit{log-Sobolev inequality}, which is an inequality between
the KL divergence (or relative entropy) and the Fisher divergence (see
\citealt[Chapter 5]{bakry_gentil_ledoux} for a comprehensive treatment of the topic
or \citealt[Section 2]{vempala_wibisono} for a brief and intuitive explanation of
the main ideas). Specifically, in our context, the log-Sobolev inequality for
$\postmeas$ on the set $B_0\left(\deltabar\sqrt{n}\right)$
% together with \ref{fisher_control},
implies that:
%
% \begin{align*}
%     \text{KL}\left[\mu\left\|\postmeas\right.\right]:=
%     \int f(x)\log\left(\frac{f(x)}{\postdens(x)}\right)dx
%     \leq \frac{\text{Fisher}(\mu|\postmeas)}{2\left(\minevaluemap-\deltabar\overline{M}_2\right)}.
%     % :=\frac{\int \left\|\left( \log f\right)'(t)-\left( \log \normdens\right)'(t)\right\|^2
%     % \mu(dt)}{2\left(\minevaluemap-\deltabar\overline{M}_2\right)}.
% \end{align*}
%
%
\begin{align}\label{kl_control}
\text{KL}[\normmeas||\postmeas] :=
\int \log\left(\frac{\normdens(x)}{\postdens(x)}\right) \normmeas(dx)
\leq \frac{\text{Fisher}(\normmeas\|\postmeas)}{2\left(\minevaluemap-\deltabar\overline{M}_2\right)}.
% \leq \frac{\left(\overline{M}_2\right)^2}{8\left(\minevaluemap-\deltabar\overline{M}_2\right)}.
\end{align}
 Note that we could also use the log-Sobolev inequality for $\normmeas$ and obtain an bound on $\text{KL}(\postmeas||\normmeas)$ expressed in terms of $\text{Fisher}(\postmeas\|\normmeas)$. However, such a bound would not be as useful as it would involve integrating with respect to $\postmeas$, which is often intractable. This is why we intentionally use the log-Sobolev inequality for $\postmeas$, which yields the upper bound in \cref{kl_control} expressed in terms of $\text{Fisher}(\normmeas\|\postmeas)$, thus involving only Gaussian integration. Because of that, we derive a bound expressed in terms of the right-hand side of \cref{fisher_control}.

Finally, we can then control the total variation and Wasserstein distances in
terms of the KL divergence. By Pinsker's inequality \cite[Theorem 2.16]{massart}
we have that
\begin{align*} \text{TV}\left(\postmeas,\normmeas\right)\leq
\sqrt{\frac{1}{2}\text{KL}[\normmeas||\postmeas]},
\end{align*}
which gives us
the desired control over the total variation distance. In order to control the
1-Wasserstein distance and the integral probability metric introduced in
\cref{w2_map}, we bound the 2-Wasserstein distance given by:
\begin{align*}
\text{W}_2\left(\postmeas,\normmeas\right) :=
    \inf_{\Gamma}\sqrt{\mathbbm{E}_\Gamma\left[\|X-Y\|^2\right]},
\end{align*}
where the infimum is taken over all distributions $\Gamma$ of
$(X,Y)$ with the correct marginals $X\sim\postmeas$ and $Y\sim \normmeas$.
Indeed, the 2-Wasserstein distance is known to upper bound the 1-Wasserstein
distance and a transformation of it upper-bounds the integral probability metric
appearing in \cref{w2_map}. We upper-bound the 2-Wasserstein distance by
the KL divergence, which we have previously controlled in
\cref{fisher_control}. We use the \textit{transportation-entropy (Talagrand)
inequality}, which is implied by the log-Sobolev inequality (see
\citealt[Theorem 4.1]{gozlan} for the specific result we use or \citealt[Section
2.2.1]{vempala_wibisono} for an intuitive explanation). In our context, this
inequality says that
\begin{align*} &\text{W}_2\left(\postmeas,\normmeas\right)
\leq
\sqrt{\frac{2\text{KL}\left[\normmeas||\postmeas\right]}{\minevaluemap-\deltabar\overline{M}_2}}.
\end{align*}
Combined with \cref{kl_control} it yields the desired control over the
1-Wasserstein distance and the metric considered in \cref{w2_map}, inside
the inner region $B_0\left(\deltabar\sqrt{n}\right)$.

It is precisely the bounds on the total-variational distance and the 1-Wasserstein distance between the truncated measures $\postmeas$ and $\normmeas$ that yield the leading summands of orders $C_d\sqrt{d^2/n}$ and $\bar{C}_d\sqrt{d^2/n}$  in the bounds in \Cref{main_map,thm:wasserstein_map} respectively, described in detail in \Cref{dimension_dependence}. The leading summand in \Cref{w2_map} is of order $\tilde{C}_d\sqrt{d^3/n}$ (using the notation of \Cref{dimension_dependence}) because of the following part of our proof, which can be found in full detail in \Cref{sec:control_i1map}. Suppose that $X\sim\postmeas$ and $Y\sim\normmeas$. Consider any vector $v\in\mathbbm{R}^d$ with $\|v\|\leq 1$ and any coupling $(\tilde{X},\tilde{Y})$ between $\postmeas$ and $\normmeas$. Then,
\begin{align*}
    \left|\mathbbm{E}\left[\left<v,X\right>^2\right]-\mathbbm{E}\left[\left<v,Y\right>^2\right]\right|=&\left|\mathbbm{E}\left[\left<v,\tilde{X}-\tilde{Y}\right>\left<v,\tilde{X}+\tilde{Y}\right>\right]\right|\\
    =&\left|\mathbbm{E}\left[\left<v,\tilde{X}-\tilde{Y}\right>^2\right]+2\mathbbm{E}\left[\left<v,\tilde{X}-\tilde{Y}\right>\left<v,\tilde{Y}\right>\right]\right|\\
	\leq&\mathbbm{E}\left[\left\|\tilde{X}-\tilde{Y}\right\|^2\right]+2\sqrt{\mathbbm{E}\left[\left\|\tilde{X}-\tilde{Y}\right\|^2\right]}\sqrt{\mathbbm{E}\left[\|Y\|^2\right]},
\end{align*}
from which it follows that
\begin{align*}
\sup_{\|v\|\leq1}\left|\mathbbm{E}\left[\left<v,X\right>^2\right]-\mathbbm{E}\left[\left<v,Y\right>^2\right]\right|\leq \left(\text{W}_2\left(\postmeas,\normmeas\right)\right)^2+2\text{W}_2\left(\postmeas,\normmeas\right)\sqrt{\mathbbm{E}\left[\|Y\|^2\right]}.
\end{align*}
We obtain a bound on $\text{W}_2\left(\postmeas,\normmeas\right)$ that is of order $\bar{C}_d\sqrt{d^2/n}$ (and appears as the leading summand in the final bound in \Cref{thm:wasserstein_map}). Yet, at the same time, $\sqrt{\mathbbm{E}\left[\|Y\|^2\right]}$ is of order $\frac{\sqrt{d}}{\minevaluemap}$. As a result, the leading term in the bound of \Cref{w2_map} is of order $\tilde{C}_d\sqrt{d^3/n}$.

\subsection{The MLE-Centric Approach}
\cref{theorem_main,theorem_1wasserstein,theorem_2wasserstein} establish our bounds in the MLE-centric approach. Our proof technique is very similar to the strategy we used in the MAP-centric approach. We compare the distribution of
$\sqrt{n}\left(\postparam-\mle\right)$ to
$\mathcal{N}\left(0,\fisherinformation(\mle)^{-1}\right)$ by concentrating
separately on the region $\{\param:\|\param\|\leq\deltamle\sqrt{n}\}$ and the
region $\{\param:\|\param\|>\deltamle\sqrt{n}\}$. In the outer region, we use \cref{assump_kappa} and Gaussian concentration inequalities. In the inner region, we use the log-Sobolev inequality.

\subsection{Our theory for general test functions in one dimension}
\cref{theorem_univariate} in the appendix establishes our bounds for general test functions in a single dimension. Its proof also divides the domain into an inner and outer
region, but proceeds using Stein's method instead of via the Fisher
divergence, using known properties of solutions to the Stein equation in one
dimension.  As with our control over the Fisher divergence, the key to our proof is using Taylor's expansions and
expressing the bound as an integral over the known measure $\normmeas$.

\section{Further results: MLE-centered approach}\label{further_results}
In this section we present some further results on the quality of Laplace approximation in the \textit{MLE-centric} approach. The proofs of \Cref{theorem_main,theorem_1wasserstein,theorem_2wasserstein} presented below can be found in \Cref{introductory_arguments,appendix_c}.

\subsection{Control over the TV distance in the MLE-centric approach}
We start by controlling the total variation distance.
\MainThm{}

\subsection{Control over the $1$-Wasserstein distance in the MLE-centric approach}
\OneWassThm{}
\begin{remark}
	Note that, in order for the bound in \Cref{theorem_1wasserstein} to be finite, we need the following integral with respect to the prior: $\int_{\|u-\mle\|>\deltamle}\|u-\mle\|\pi(u)du$ to be finite.
\end{remark}

\subsection{Control over the difference of covariances in the MLE-centric approach}
\TwoWassThm{}
\begin{remark}
	Note that, in order for the bound in \Cref{theorem_2wasserstein} to be finite, we need the following integral with respect to the prior to be finite: $\int_{\|u\|>\deltamle}\|u\|^2\pi(u+\hat{\theta}_n)du$. 
\end{remark}

\section{Example applications}\label{applications}

Now, we present examples of our bounds computed for different models.\footnote{The code for our experiments is available at \url{https://github.com/mikkasprzak/laplace_approximation.git}} None of
these examples correspond to strongly log-concave posteriors, making them challenging for the Laplace approximation. Indeed, the first
one yields a weakly log-concave posterior, and the second and third one produce
non-log-concave posteriors. They all show that our bounds are explicitly
computable for a variety of commonly used models, including heavy-tailed ones, to which the methods currently available in the literature do not apply.
Our examples also show that the computations may be executed in a practical amount of
time. Our bounds also go well below the true values of the mean and the norm of the covariance, for reasonable sample sizes. This comparison indicates that they are applicable for practitioners who wish to assess how confident they should be in their mean and variance estimates.

\subsection{Our bounds work under misspecification: Poisson likelihood with gamma prior and exponential data}\label{poisson_example}

First, we look at a one-dimensional conjugate model, for which we can compare
our bounds on the difference of means and the difference of variances to the
ground truth. We consider a Poisson likelihood and a gamma prior with shape
equal to $0.1$ and rate equal to $3$, which yields a weakly log-concave posterior. Our data are generated from the
exponential distribution with mean $10$. \Cref{fig:mean_gamma,fig:variance_gamma} show that our bounds (in the MAP-centric approach) on the difference of means and the
difference of variances get close to the true difference of means and the true difference of variances  for
sample sizes above around  $250$. More detail on how one can compute the
constants appearing in the bounds can be found in
\Cref{calculations_poisson}.

\begin{figure}
	\begin{center}
		\begin{subfigure}{0.45\textwidth}
			\includegraphics[width=\textwidth]{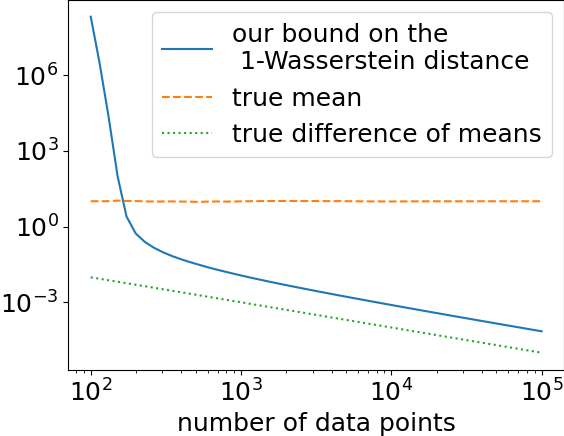}
			\caption{Posterior mean}
			\label{fig:mean_gamma}
		\end{subfigure}
		\hspace{1cm}
		\begin{subfigure}{0.45\textwidth}
			\includegraphics[width=\textwidth]{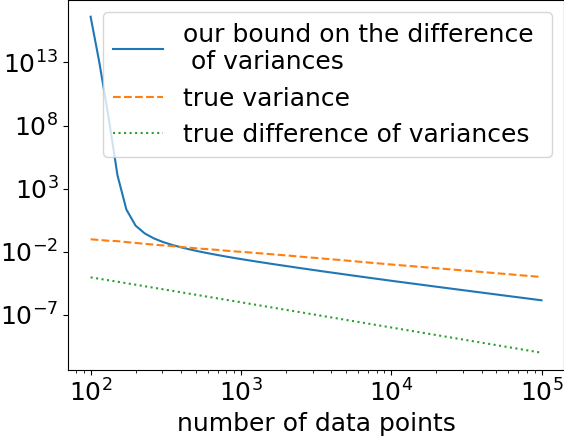}
			\caption{Posterior variance}
			\label{fig:variance_gamma}
		\end{subfigure}
	\end{center}
	\caption{Poisson likelihood with gamma prior and exponential data (MAP-centric approach)}
\end{figure}

\subsection{Our bounds work for heavy-tailed posteriors: Weibull likelihood with inverse-gamma prior}\label{weibull_example}

Now, we consider another conjugate model and compare our bounds to the ground
truth. In this case, the posterior is heavy-tailed, which represents one common way that non-log-concavity arises in practice. In our experiment, we set
the shape of the Weibull to $\frac{1}{2}$, and we make inference about the scale.
The prior is inverse-gamma with shape equal to $3$ and scale equal to $10$. The
data are Weibull with shape $1/2$ and scale $1$. \Cref{fig:mean_weibull,fig:variance_weibull} demonstrate that our bounds on the difference of means
and the difference of variances (in the MAP-centric approach) get close to the
true difference of means and the true difference of variances for sample sizes
in low thousands. Our bounds take large values for small sample sizes --- a phenomenon explained in detail in \cref{sec:sample_size_dep} above. More detail on how one can compute the constants appearing in
the bounds can be found in \Cref{calculations_weibull}.

\begin{figure}
	\begin{center}
		\begin{subfigure}{0.45\textwidth}
			\includegraphics[width=\textwidth]{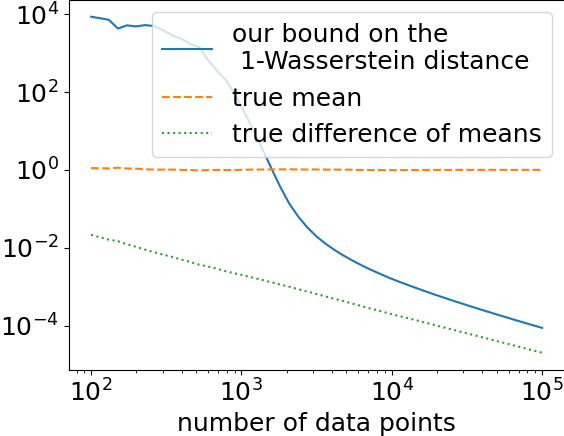}
			\caption{Posterior mean}
			\label{fig:mean_weibull}
		\end{subfigure}
		\hspace{1cm}
		\begin{subfigure}{0.45\textwidth}
			\includegraphics[width=\textwidth]{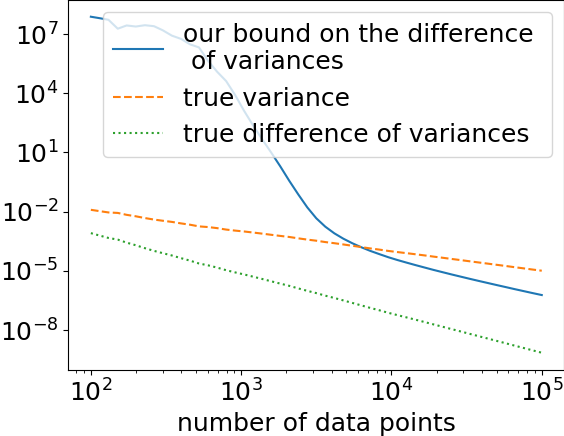}
			\caption{Posterior variance}
			\label{fig:variance_weibull}
		\end{subfigure}
	\end{center}
	\caption{Weibull likelihood with inverse-gamma prior (MAP-centric approach)}
\end{figure}

\subsection{Our bounds work for multivariate non-log-concave posteriors: logistic regression with Student's t prior}\label{logistic_example}

While the examples above have a one-dimensional parameter, we next show that our bounds are computable and non-vacuous for a multivariate posterior. Like the Weibull example above, it is not log-concave. But in  this case, the non-log-concavity arises closer to the central mass of the distribution rather than in the tails.

\subsubsection{Setup} \label{sec:logistic_setup}
Suppose $X_1,\dots,X_n\in\mathbbm{R}^d$ and $Y_1,\dots,Y_n\in\{-1,1\}$.  We will study the following log-likelihood:
\begin{align}\label{logistic_likelihood}
	\loglikelihood(\param):=\loglikelihood\left(\param|\left(Y_i\right)_{i=1}^n,\left(X_i\right)_{i=1}^n\right)=-\sum_{i=1}^n\log(1+e^{-X_i^T\param Y_i}).
\end{align}
For a covariance matrix $\Sigma$, a vector $\mu\in\mathbbm{R}^d$ and the number of degrees of freedom $\nu>0$, we consider a $d$-dimensional Student's t prior on $\param$, given by
\begin{align}\label{t_prior}
	\prior(\param)=\frac{\Gamma((\nu+d)/2)}{\Gamma(\nu/2)\nu^{d/2}\pi^{d/2}|\Sigma|^{1/2}}\left[1+\frac{1}{\nu}(\theta-\mu)^T\Sigma^{-1}(\theta-\mu)\right]^{-(\nu+d)/2}.
\end{align}
Combining the log-likelihood given by \cref{logistic_likelihood} with the prior given by \cref{t_prior} yields a posterior that is known not to be log-concave. In \Cref{example_detail_logistic} we compute analytically the constants arising in our bounds in the case of the posterior coming from the likelihood given in \cref{logistic_likelihood} and prior given in \cref{t_prior}.  In our present experiments, we choose the Student's t prior to have mean zero, identity covariance, and four degrees of freedom. 
% Moreover, for certain data sets the posterior is heavy tailed. An easy one-dimensional example of such a data set is one for which $X_iY_i>0$ for all $i$. \textcolor{blue}{****QUESTION: what about the data in these experiments specifically? can we say that these experiments specifically are heavy tailed}

We performed our experiments for the MAP-centric approach. We derived the values for the MAP and MLE numerically; recall that we use both the MAP and MLE in our MAP-centric bounds. Moreover, in order to sharpen our bounds, we ran a  built-in global optimizer scipy.optimize.shgo to derive the maximum of the third derivative inside a ball around the MLE and the MAP, i.e. to derive $M_2$ and $\overline{M}_2$. As there was a certain degree of choice for $\deltamle$ and $\deltabar$, we also optimized the choice thereof numerically. The data $(Y_i)_{i=1}^n$ we used came from logistic regression with parameter $(1,1,1,1,1)$, where we simulated the $(X_i)_{i=1}^n$ i.i.d.\ from the $5$-dimensional normal distribution with mean $(0,0,0,0,0)$ and covariance $\frac{0.15}{\sqrt{5}}I_{5\times 5}$, where $I_{5\times 5}$ is the $5\times 5$ identity matrix. We compared our bound from \Cref{thm:wasserstein_map} with the Euclidean norm of the difference between the MAP and the simulated true posterior mean. We obtained our simulated true posterior mean through MCMC, using the standard \textit{Stan} implementation (with 4 parallel chains). Similarly, we compared our bound from \Cref{w2_map} with the operator norm of the difference between $\frac{\postfisherinformation(\map)^{-1}}{n}$ and the simulated true posterior covariance, obtained again through the standard \textit{Stan} MCMC implementation.

\subsubsection{Results}\label{subsec_experim}

\Cref{fig:logistic_mean_true_comparison,fig:logistic_covariance_true_comparison} demonstrate that, in this case, our bounds on the 1-Wasserstein distance and the 2-norm of the difference of covariances tightly control the true difference of means and the true difference of covariances, respectively, for sample sizes of about $10{,}000$ or higher. Our bounds are still computable and accurate at lower sample sizes, but they can become loose enough as to lose practicality.

\subsection{Our bounds become looser and more difficult to compute as parameter dimension increases: logistic regression with Student's t prior}\label{logistic_dimension}

We use a similar setup as in \cref{sec:logistic_setup}, but now we vary the dimension $d$ between 1 and 9. We find that increasing $d$ loosens our bounds and makes them more difficult to compute. But we still find that they are computable and tight for reasonable data set sizes.

Specifically, we follow the setup in \Cref{sec:logistic_setup} except that now we simulate $(X_i)_{i=1}^n$ i.i.d.\ from the $d$-dimensional normal distribution with mean $(0,\dots,0)$ and covariance $\frac{0.15}{\sqrt{d}}I_{d\times d}$. We simulate the corresponding $(Y_i)_{i=1}^n$ from $d$-dimensional logistic regression with parameter $(0,\dots,0)$. We compare our bounds across different dimensions in \Cref{fig:logistic_dimension_comparison}. While we see that the bounds generally increase with dimension, we also see that they remain practically small for some portion of the plotted data range for every dimension we consider.

There are two ways in which our bounds become more difficult to compute with increasing dimension. First, we cover the ability to compute the bounds. Second, we cover computational speed in cases where the bounds are possible to compute.

On the first point, we are able to compute our bounds when \cref{assump7} is satisfied.
The sample sizes where \cref{assump7} is satisfied vary by dimension. In \Cref{fig:logistic_dimension_comparison}, we plot our bounds for every number of data points where \cref{assump7} is satisfied. 
For the same data sets and priors, we calculated the minimum number of data points required to satisfy \cref{assump7}, for the same range of dimensions: $d$ between $1$ and $9$. See \Cref{fig:logistic_minimal_data}. We find that the minimum number of data points remains practical in this range.

\begin{figure}
	\begin{center}
		\begin{subfigure}{0.45\textwidth}
			\includegraphics[width=\textwidth]{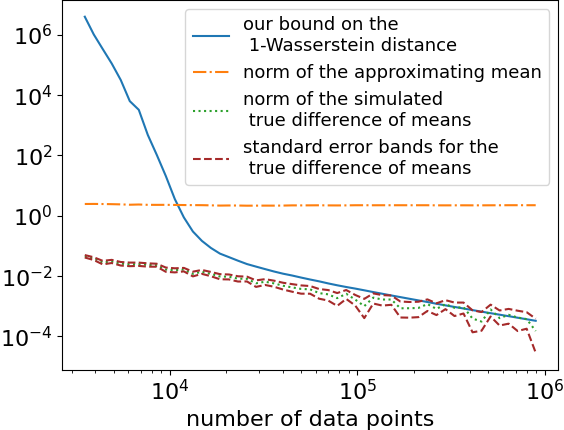}
			\caption{Posterior mean}
			\label{fig:logistic_mean_true_comparison}
		\end{subfigure}
		\hspace{1cm}
		\begin{subfigure}{0.45\textwidth}
			\includegraphics[width=\textwidth]{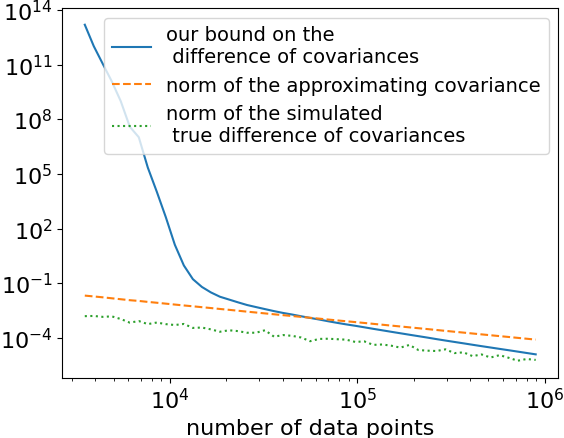}
			\caption{Posterior covariance}
			\label{fig:logistic_covariance_true_comparison}
		\end{subfigure}
	\end{center}
	\caption{Our bounds for 5-dimensional logistic regression with t prior (MAP-centric approach) compared to the simulated true difference of means and covariances (with Monte Carlo standard error bands included for the posterior mean). The data $(Y_i)_{i=1}^n$ come from logistic regression with parameter $(1,1,1,1,1)$, where the $(X_i)_{i=1}^n$ are simulated i.i.d.\ from the $5$-dimensional normal distribution with mean $(0,0,0,0,0)$ and covariance $\frac{0.15}{\sqrt{5}}I_{5\times 5}$.}
	\label{figure3}
\end{figure}

\begin{figure}
	\begin{center}
		\begin{subfigure}{0.45\textwidth}
			\includegraphics[width=\textwidth]{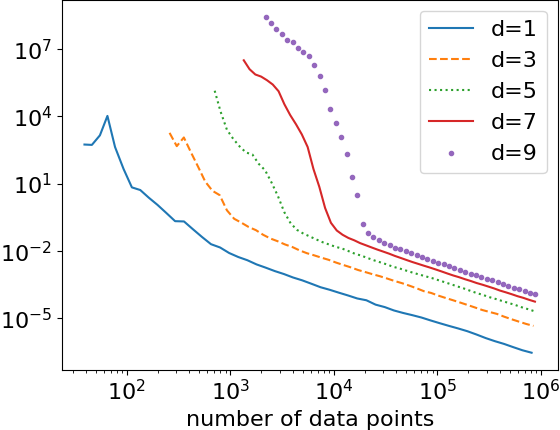}
			\caption{Our bound on 1-Wasserstein distance}
		\end{subfigure}
		\hspace{1cm}
		\begin{subfigure}{0.45\textwidth}
			\includegraphics[width=\textwidth]{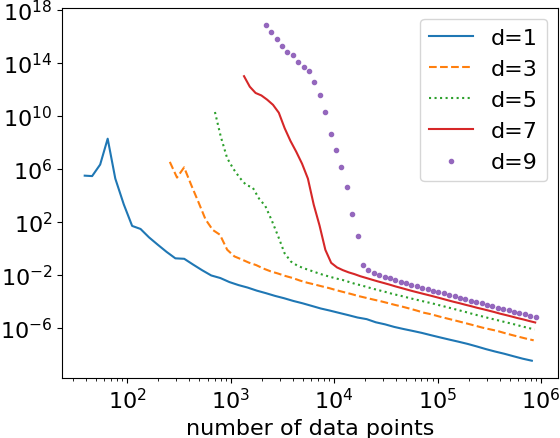}
			\caption{Our bound on difference of covariances}
		\end{subfigure}
	\end{center}
	\caption{Comparison of our bounds for the logistic regression with t prior (MAP-centric approach) for different dimensions d. The data $(Y_i)_{i=1}^n$ come from logistic regression with parameter $(0,\dots,0)$, where the $(X_i)_{i=1}^n$ are simulated i.i.d.\ from the normal distribution with mean $(0,\dots,0)$ and covariance $\frac{0.15}{\sqrt{d}}I_{d\times d}$.}\label{fig:logistic_dimension_comparison}
\end{figure}

\begin{figure}
	\begin{center}
		\includegraphics[width=0.5\textwidth]{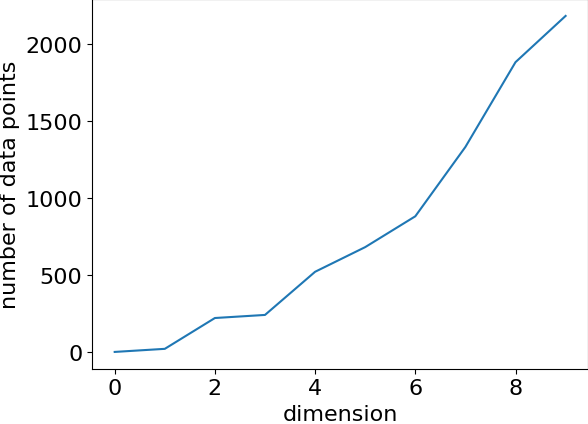}
		\caption{Minimum number of data points required to compute our bounds for logistic regression with t prior (MAP-centric approach). The data $(Y_i)_{i=1}^n$ come from logistic regression with parameter $(0,\dots,0)$, where the $(X_i)_{i=1}^n$ are simulated i.i.d.\ from the normal distribution with mean $(0,\dots,0)$ and covariance $\frac{0.15}{\sqrt{d}}I_{d\times d}$.}\label{fig:logistic_minimal_data}
	\end{center}
\end{figure}

\begin{figure}
	\begin{center}
		\begin{subfigure}{0.45\textwidth}
			\includegraphics[width=\textwidth]{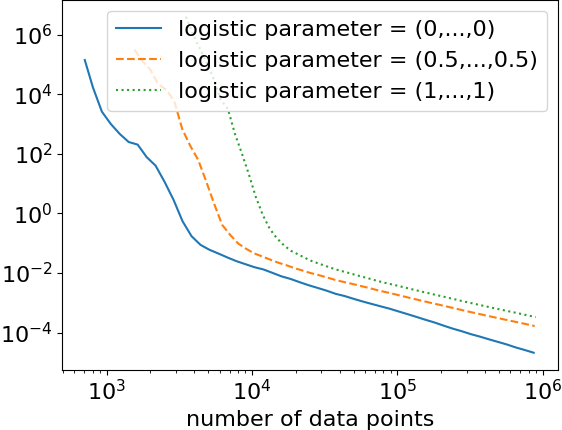}
			\caption{Our bound on 1-Wasserstein distance}
                \label{fig:logistic_condition_mean}
		\end{subfigure}
		\hspace{1cm}\vspace{1cm}
		\begin{subfigure}{0.45\textwidth}
			\includegraphics[width=\textwidth]{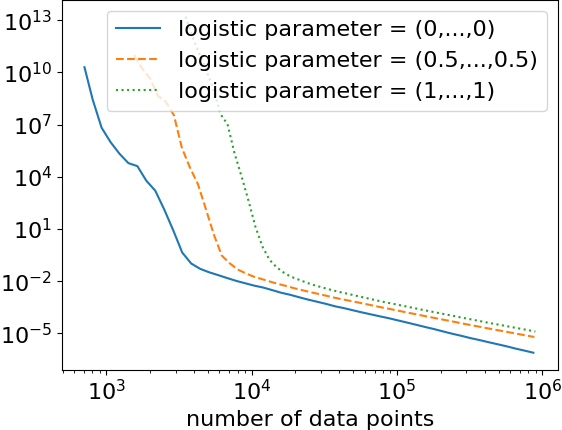}
			\caption{Our bound on difference of covariances}
                \label{fig:logistic_condition_covariance}
		\end{subfigure}
        		
        \begin{subfigure}{0.45\textwidth}
			\includegraphics[width=\textwidth]{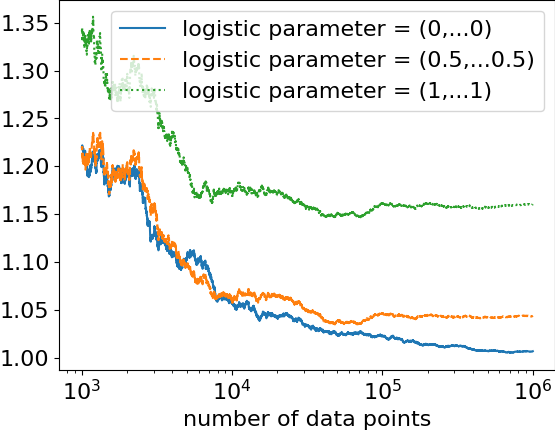}
			\caption{Condition number}
                \label{fig:logistic_condition_number}
		\end{subfigure}
	\end{center}
	\caption{Comparison of our bounds for the 5-dimensional logistic regression with t prior (MAP-centric approach) for different data sets and the comparison of the condition number calculated for those data sets. The data  $(X_i)_{i=1}^n$ were in each case simulated i.i.d.\ from the $5$-dimensional normal distribution with mean $(0,0,0,0,0)$ and covariance $\frac{0.15}{\sqrt{5}}I_{5\times 5}$. The corresponding $(Y_i)_{i=1}^n$ were simulated from logistic regression with parameter $(0,0,0,0,0)$ in the first case, $(0.5,0.5,0.5,0.5,0.5)$ in the second case and $(1,1,1,1,1)$ in the third case.}
\end{figure}

When we are able to compute our bounds, computation still slows down as dimension increases due to
the numerical optimization involved in calculating $\overline{M}_2$. The slowdown is mainly caused by the global optimizer scipy.optimize.shgo becoming slow in higher dimensions. To create \Cref{fig:logistic_dimension_comparison}, we computed our bounds on the 1-Wasserstein distance and the 2-norm of the difference of covariances, simultaneously, on 51 points spread uniformly on the logarithmic scale between the minimum sample size given by \Cref{fig:logistic_minimal_data} and the sample size of $10^6$. Performing this experiment on a MacBook Pro with the Apple M3 Max Chip and 16 cores in total for $d=1$ took us about $1.5$ total minutes, for $d=3$ about $3.35$ minutes, for $d=5$ about $10$ minutes, for $d=7$ about $36$ minutes and for $d=9$ about $147$ minutes.

\subsection{Are our bounds smaller for small condition numbers? Logistic regression with Student's t prior}\label{sec:are_bounds_smaller}

In all of our experiments above, we see that our bounds are tighter for larger sample sizes and become loose at small numbers of data points; see \Cref{sec:sample_size_dep}. We hypothesized that the looseness of our bounds might be a function of condition number (defined as the ratio of the largest and the smallest eigenvalue of $\postfisherinformation(\map)$). To test this hypothesis, we calculated and compared our bounds from \Cref{thm:wasserstein_map,w2_map} for $5$-dimensional logistic regression on three different datasets. As before, the prior was Student's t with $4$ degrees of freedom, mean zero, and identity covariance. We simulated the data  $(X_i)_{i=1}^n$ i.i.d.\ from the $5$-dimensional normal distribution with mean $(0,0,0,0,0)$ and covariance $\frac{0.15}{\sqrt{5}}I_{5\times 5}$. We simulated the corresponding $(Y_i)_{i=1}^n$ from logistic regression but now with three different parameter values: $(0,0,0,0,0)$ in the first case, $(0.5,0.5,0.5,0.5,0.5)$ in the second case, and $(1,1,1,1,1)$ in the third case. In \Cref{fig:logistic_condition_mean,fig:logistic_condition_covariance}, we plot how our bound varies both by number of data points and by the logistic parameter. In \Cref{fig:logistic_condition_number} we plot how the condition number varies by number of data points and logistic parameter. As expected, the condition number is higher for smaller numbers of data points in \Cref{fig:logistic_condition_number}, and that range of data points roughly corresponds to the range where our bound is looser in \Cref{fig:logistic_condition_mean,fig:logistic_condition_covariance}. But we do not see that condition number is decisive. For instance, we see that our bound is loose for condition numbers above 1.15 on the orange curve (parameter $(0.5,0.5,0.5,0.5,0.5)$); compare \Cref{fig:logistic_condition_mean,fig:logistic_condition_covariance} and  \Cref{fig:logistic_condition_number}. But we also see that the condition numbers for the green curve (parameter $(1,1,1,1,1)$) are  generally at or above $1.15$ (\Cref{fig:logistic_condition_number}), but our bounds are still tight for high sample sizes (\Cref{fig:logistic_condition_mean,fig:logistic_condition_covariance}).
See \Cref{sec:sample_size_dep} for an additional discussion.

\section{Conclusions and future work}\label{conclusions}
We provide bounds on the quality of the Laplace approximation that are computable and hold under the standard assumptions of the Bernstein--von Mises Theorem. We control the total-variation distance, the 1-Wasserstein distance, and another useful integral probability metric. Our bounds on the total-variation and 1-Wasserstein distance are such that their sample-size and dimension dependence cannot be improved in general. However, the constants in our bounds are evidently sub-optimal, which makes our bounds unnecessarily large for small and moderate sample sizes. We hope that the values of those constants may be reduced in the future.

An interesting question is whether, for multivariate posteriors, we could derive bounds on more general integral probability metrics, in a way similar to our univariate \Cref{theorem_univariate}.  We proved \Cref{theorem_univariate} using Stein's method and we present it as part of the appendix. Whether or not this result could be extended to the multivariate context is to a certain extent a question about the applicability of techniques like Stein's method in dimension greater than one for measures truncated to a bounded convex set. So far we have struggled to find enough theory that would let us compute useful bounds using this approach.
% Acknowledgements and Disclosure of Funding should go at the end, before appendices and references

\acks{The authors would like to thank Krishna Balasubramanian, Xiao Fang, Max Fathi, Jonathan Huggins, Anya Katsevich, Tin D. Nguyen, Jeff Miller, Daniel Paulin, Gesine Reinert, H\r{a}vard Rue, Adil Salim, Vladimir Spokoiny and Yvik Swan for interesting discussions. We are particularly grateful to Anya Katsevich for a discussion concerning the dimension dependence.
	This work is part of project \textbf{Stein-ML} that has received funding from the European Union’s Horizon 2020 research and innovation program under the Marie Skłodowska-Curie grant agreement \textbf{No 101024264}. Additionally, this work was supported by France 2030. This work was also supported in part by an NSF Career
	Award and an ONR Early Career Grant.}

% Manual newpage inserted to improve layout of sample file - not
% needed in general before appendices/bibliography.

\appendix
\begin{appendix}

\section{An additional general univariate bound}\label{sec:univariate}
Now we present the following bound, which works for pretty arbitrary test functions, but only in dimension one. The proof of this bound is different from the proofs of the results presented in the main body of the paper as it relies on Stein's method instead of the log-Sobolev inequality. Stein's method allows us to achieve control over the difference of expectations with respect to the rescaled posterior and with respect to the approximating Gaussian of very general test functions. The particular properties of Stein's method we use, however, only yield bounds in dimension one. We believe it would be interesting and useful to derive such general bounds in higher dimensions in the future. The proof of \Cref{theorem_univariate} presented below can be found in \Cref{introductory_arguments,appendix_d}.
\UnivariateThm{}
\begin{remark}
    The bound in \Cref{theorem_univariate} is for the MLE-centric approach. However, its proof can easily be modified in order to yield analogous bounds on the quality of approximation in the MAP-centric approach, as described in \Cref{section_notation} and presented in \Cref{s:main}.

Constants $G_1,G_2,G_4,G_5$ appearing above may be straightforwardly controlled by controlling the involved Gaussian integrals. Moreover, the fact that $C_n^{(1)}, C_n^{(2)}, C_n^{(3)}, C_n^{(4)}$ are positive follows from \Cref{assump_fisher}.
\end{remark}
\section{Introduction to the Laplace approximation and the Bernstein--von Mises theorem}\label{sec:bvm}
The foundations of Laplace approximation date back to the work of \cite{laplace} (see \citealt{laplace_english} for an English translation and \citealt{bach} for an intuitive discussion). It was originally introduced as a method of approximating integrals of the form
\begin{align*}
	\text{Int}(n):=\int_K e^{-nf(x)}dx,\quad n\in\mathbbm{N},
\end{align*}
where $K$ is a subset of $\mathbbm{R}^d$ and $f$ is a real-valued function on $\mathbbm{R}^d$. Suppose that $x^*\in K$ is a strict global maximizer of $f$ on $K$. Heuristically, under appropriate smoothness assumptions on $f$, we can use Taylor's expansion to obtain:
\begin{align*}
f(x)\approx &f(x^*)+f'(x^*)(x-x^*)+\frac{1}{2}(x-x^*)^Tf''(x^*)(x-x^*)\\
=& f(x^*)+\frac{1}{2}(x-x^*)^Tf''(x^*)(x-x^*).
\end{align*}
We therefore have:
\begin{align*}
	\text{Int}(n)\approx&\int_K\exp\left[-nf(x^*)-\frac{n}{2}(x-x^*)^Tf''(x^*)(x-x^*)\right]dx
\end{align*}
 As a result, heuristically, up to a constant not depending on $K$, $	\text{Int}(n)$ can be approximated by the integral of the density of the Gaussian measure with mean $x^*$ and covariance matrix given by $\frac{1}{n}f''(x^*)^{-1}$. Now, suppose that $e^{-nf(x)}$ is an unnormalized posterior density, i.e. that $\Pi_n(\cdot)\propto e^{-nf(\cdot)}$ for some posterior $\Pi_n$.
Writing $\boldsymbol{\Pi}_n$ for the posterior probability measure and $\map$ for the posterior mode (i.e. the maximum a posteriori or MAP),  the above statement suggests that
\begin{align}\label{laplace_approximation}
	\boldsymbol{\Pi}_n\approx\mathcal{N}(\map,\left(\log(\Pi_n)''(\map)\right)^{-1}),
\end{align}
where $\mathcal{N}(\mu,\Sigma)$ denotes the normal law with mean $\mu$ and covariance $\Sigma$. The computation of the mean and covariance of the above Gaussian is in the majority of cases easy numerically. It can be achieved using standard optimization schemes and does not require access to integrals with respect to the posterior, the normalizing constant of the posterior density or the true parameter. This is why Laplace approximation is a popular tool in approximate Bayesian inference.

While the above heuristic considerations may be turned into rigorous statements under certain conditions, a proper probabilistic grounding for the Laplace approximation is provided by the Bernstein--von Mises (BvM) theorem. As described in numerous classical references, including \citet[Section 1.4]{ghosh} or \citet[Section 10.2]{vandervaart}, the BvM theorem says that under mild assumptions on the likelihood and the prior, the posterior distribution converges to a Gaussian law in the following sense. Suppose that $\postparam$ is distributed according to the posterior, obtained after observing $n$ data points. Let $\param_0$ be the true parameter, $\mle$ be the MLE and $I(\cdot)$ be the Fisher information matrix. Let $\text{TV}$ denote the total variation distance and let the function $\mathcal{L}(\cdot)$ return the law of its argument. Then, if the model is well-specified and certain regularity conditions are satisfied,
\begin{align}\label{eq:bvm1}
	\text{TV}\left(\mathcal{L}\left(\sqrt{n}(\postparam-\mle)\right),\mathcal{N}(0,I(\param_0)^{-1})\right)\xrightarrow{\mathcal{P}}0,\qquad \text{as }n\to\infty,
\end{align}
where $n$ is the number of data and the convergence occurs in probability with respect to the law of the data.

While the model being well-specified is a crucial assumption in the above statement, \cite{misspecified_bvm} proved its modified version, under model misspecification. The main difference is in the limiting covariance matrix. Specifically, in this context, the authors assume the model is of the form $\param\mapsto p_{\param}$ and the observations are sampled from a density $p_0$ that is not necessarily of the form $p_{\param_0}$ for some $\theta_0$. They show that under certain regularity conditions,
\begin{align}\label{eq:bvm2}
	\text{TV}\left(\mathcal{L}\left(\sqrt{n}(\postparam-\mle)\right),\mathcal{N}(0,V(\param^*)^{-1})\right)\xrightarrow{\mathcal{P}}0,\qquad \text{as }n\to\infty,
\end{align}
where $\param^*$ minimizes the Kullback-Leibler divergence  $\param\mapsto \int \log(p_0(x)/p_{\param}(x))p_0(x)dx$ and $V(\param^*)$ is minus the second derivative of this map, evaluated at $\param^*$.

 Let us now denote by $L_n$ the generalized log-likelihood. A closer look at the classical proofs, including Le Cam's one (see e.g. \citealt[Section 1.4]{ghosh}), or more recent ones, including that of \citet[Appendix B]{miller}, reveals that, under standard regularity conditions:
\begin{align}\label{eq:bvm3}
	\text{TV}\left(\mathcal{L}\left(\sqrt{n}(\postparam-\mle)\right),\mathcal{N}\left(0,\left[-\frac{L_n''(\mle)}{n}\right]^{-1}\right)\right)\xrightarrow{\mathcal{P}}0,\qquad \text{as }n\to\infty,
\end{align}
no matter if the model is well-specified or not. It is known that under mild assumptions the MLE $\mle$ and the maximum a posteriori (MAP) $\map$ get arbitrarily close to each other as the number of data $n$ goes to infinity. Similarly, denoting by $\overline{L}_n$ the logarithm of the posterior density, $\overline{L}_n$ and $L_n$ get arbitrarily close  as $n$ goes to infinity. It can be shown, in a similar fashion to \cref{eq:bvm3}, that under standard regularity assumptions,
\begin{align}\label{eq:bvm4}
	\text{TV}\left(\mathcal{L}\left(\sqrt{n}(\postparam-\map)\right),\mathcal{N}\left(0,\left[-\frac{\logposterior''(\map)}{n}\right]^{-1}\right)\right)\xrightarrow{\mathcal{P}}0,\qquad \text{as }n\to\infty.
\end{align}
\Cref{eq:bvm4} gives a rigorous justification and meaning to the Laplace approximation given by \cref{laplace_approximation} and \cref{eq:bvm3} provides its alternative version. The approximating covariance in both \cref{eq:bvm4,eq:bvm3} is computable without access to the posterior normalizing constant or the true parameter.

The recent paper \citet{miller} proves almost sure versions of the statements given in \cref{eq:bvm1,eq:bvm2} for a large collection of commonly used models. Naturally, similar almost sure convergence statements can be obtained for the approximations appearing in \cref{eq:bvm3,eq:bvm4}.

\section{Proofs of Propositions \ref{prop_exponential} and \ref{prop_glm}}\label{sec:proofs_of_propositions}
\subsection{Proof of Proposition \ref{prop_exponential}}\label{proof_exponential}
    The proof is inspired by the proof of \citet[Theorem 12]{miller}. Note that $L_n(\theta)=-n\alpha(\theta)+\theta^TS_n,$ where $S_n=\sum_{i=1}^n s(Y_i)$. By standard exponential family theory \cite[Proposition 19]{miller_harrison}, $\alpha$ is $C^{\infty}$, strictly convex on $\Theta$, $\alpha'(\theta)=\mathbbm{E}_{\theta}s(Y)$ and $\alpha''(\theta)$ is symmetric positive definite for all $\theta\in\Theta$. Let $s_0:=\mathbbm{E}_{\theta_0}s(Y)$. By the strong law of large numbers, for all $\theta\in\Theta$, 
    \begin{align*}
    \frac{L_n(\theta)}{n}\xrightarrow{n\to\infty}-\alpha(\theta)+\theta^Ts_0=:f(\theta), \quad\text{almost surely}. 
    \end{align*}
    Note that due to the almost sure convergence of the sufficient statistics, we actually have a stronger statement. Indeed, it holds that, almost surely, for all $\theta\in\Theta$, $\frac{L_n(\theta)}{n}\xrightarrow{n\to\infty}f(\theta)$.
    
    Now, note that $f'(\theta_0)=0$. Note also that the MLE $\mle$ satisfying $L_n'(\mle)=0$ is unique (if it exists) because $L_n$ is strictly concave. By \citet[Theorems 12 and 5]{miller}, we have that the MLE $\mle$ almost surely exists and 
    \begin{align}\label{conv_of_mle}
    \mle\to\theta_0\,\,\text{ almost surely}\quad\text{and}\quad \frac{L_n(\mle)}{n}\xrightarrow{n\to\infty}f(\theta_0)\,\,\text{ almost surely.}
    \end{align}

    Let $\deltamle>0$ and $\bar{E}:=\{\theta:\|\theta-\theta_0\|\leq 2\delta\}$ be such that $\bar{E}\subseteq\Theta$. Then $\alpha'''$ is bounded on $\bar{E}$, since $\alpha'''$ is continuous and $\bar{E}$ is compact. Hence, $\frac{L_n'''}{n}$ is uniformly bounded on $\bar{E}$ because $L_n'''(\theta)=-n\alpha'''(\theta)$. Therefore, almost surely, \Cref{assump1} is satisfied for large enough $n$ and small enough $\delta$ with $M_2$ not depending on $n$. This is because $\mle\to\theta_0$ almost surely and so, almost surely, for large enough $n$, $\{\theta:\|\theta-\mle\|\leq \delta\}\subseteq \bar{E}$.  Now,  let $\xi_{\theta}$ be the point on the line connecting $\theta$ and $\theta_0$ that lies on $\{t:\|t-\theta_0\|=\deltamle/2\}$.
    % Note that
    % \begin{align*}
    %      \left|\frac{L_n(\mle)}{n}-\frac{L_n(\theta_0)}{n}\right|\xrightarrow{n\to\infty}0.
    % \end{align*}
    % Indeed, this is because of the convergence $\mle\to\theta_0$ and $\frac{L_n}{n}$ being equiLipschitz on $\{t:\|t-\theta_0\|<\epsilon\}$ for any $\epsilon>0$, by \cite[Theorem 7]{miller}.
    The strict concavity of $L_n$ implies that, almost surely,
    \begin{align}
        &\limsup_{n\to\infty}\sup_{\theta:\|\theta-\mle\|>\deltamle}\frac{L_n(\theta) - L_n(\mle)}{n}\notag\\
        \leq& \limsup_{n\to\infty}\sup_{\theta:\|\theta-\theta_0\|>\deltamle/2}\frac{L_n(\theta)-L_n(\theta_0)}{n}\notag \\
        \leq&\limsup_{n\to\infty}\sup_{\theta:\|\theta-\theta_0\|>\deltamle/2}\frac{\|\theta_0-\theta\|}{\deltamle/2}\cdot\frac{L_n(\xi_{\theta})-L_n(\theta_0)}{n}\notag\\
        \leq&\limsup_{n\to\infty} \sup_{t:\|t-\theta_0\|=\deltamle/2}\frac{L_n(t)-L_n(\theta_0)}{n}\notag\\
        =&\limsup_{n\to\infty} \sup_{t:\|t-\theta_0\|=\deltamle/2}\left(-\alpha(t)+\frac{t^TS_n}{n}\right)-f(\theta_0)\notag\\
        \stackrel{(\ast)}=&\sup_{t:\|t-\theta_0\|=\deltamle/2}f(t)-f(\theta_0)\stackrel{(\ast\ast)}<0.\label{kappa_calculation1}
        \end{align}
    Equality $(\ast)$ follows from the fact that the sequence of functions $\left(t\mapsto \frac{t^TS_n}{n}\right)_n$ almost surely converges to $\left(t\mapsto t^Ts_0\right)$ uniformly on the set $\{t:\|t-\theta_0\|=\deltamle/2\}$.
    Now, we note that the function $t\mapsto f(t)-f(\theta_0)$ is continuous and negative on the set $\{t:\|t-\theta_0\|=\deltamle/2\}$, by \citet[Lemma 27 (1)]{miller}. Therefore, its supremum on the compact set $\{t:\|t-\theta_0\|=\deltamle/2\}$ is negative, which justifies the last inequality $(\ast\ast)$. Therefore almost surely \Cref{assump_kappa} is satisfied for large enough $n$, for a $\kappamle$ not depending on $n$.

    Note that the above considerations are valid for any fixed $\deltamle>0$, such that $\bar{E}\subseteq\Theta$. Since $\alpha''$ is continuous on $\Theta$ and $\alpha''(\theta)$ is symmetric, positive definite for all $\theta$ and $\mle\to\theta_0$, almost surely, \Cref{assump_fisher} is also satisfied  for $n$ large enough and for $\deltamle>0$ small enough, almost surely. Moreover, note that $\Tr\left(\fisherinformation(\mle)^{-1}\right)=\Tr\left(\alpha''(\mle)^{-1}\right)\leq \frac{d}{\minevaluemle}$. Since $\alpha''$ is continuous, $\alpha''(\theta)$ is positive definite for all $\theta\in\Theta$ and $\mle\to\theta_0$ almost surely, we have that almost surely for large $n$, $\frac{d}{\minevaluemle}$ is uniformly bounded from above. Therefore, \Cref{assump_size_of_delta} is satisfied for large enough $n$, almost surely.

    Furthermore, \Cref{assump_prior1} is satisfied immediately for large $n$ (with $\hat{M}_1$ independent of $n$) if the prior density $\pi$ is continuous and positive in a neighborhood around $\theta_0$ and if $\mle\to\theta_0$, which holds almost surely (\cref{conv_of_mle}). Similarly \Cref{assump_prior2} is immediately satisfied for large $n$ (with $M_1$ and $\widetilde{M}_1$ independent of $n$) if, additionally, $\pi$ is continuously differentiable in a neighborhood of $\theta_0$.

    Now, assume that $\pi$ is thrice continuously differentiable on $\Theta$. Then $\logposterior$ is upper-bounded and has at least one global maximum $\theta_n^*\in\Theta$. Using \cref{kappa_calculation1}, we note that $\|\theta_n^*-\mle\|\xrightarrow{n\to\infty}0$ almost surely (as $\pi$ is uniformly upper bounded on $\Theta$). We also note that, almost surely, for any fixed neighborhood of $\theta_0$ and for sufficiently large $n$, $-\frac{\logposterior''(\theta)}{n}=\alpha''(\theta)-\frac{(\log\pi)''(\theta)}{n}$ is strictly positive definite for $\theta$ in that neighborhood. This means that, almost surely, for sufficiently large $n$, the MAP $\map$ is unique. Moreover, $\|\map-\mle\|\xrightarrow{n\to\infty}0$ almost surely,which implies that $\map\to\theta_0$ almost surely, by \cref{conv_of_mle}.
    % It follows from \Cref{assump_prior2} holding almost surely that
    % \begin{align*}
    %     \left\|\frac{\logposterior'(\mle)}{n}-\frac{\logposterior'(\map)}{n}\right\|=\left\|\frac{\logposterior'(\mle)}{n}\right\|=\left\|\frac{\logposterior'(\mle)}{n}-\frac{\loglikelihood'(\mle)}{n}\right\|\xrightarrow{n\to\infty}0,\quad \text{a.s.}
    % \end{align*}
    % Note that $-\frac{\logposterior''(\theta)}{n}=\alpha''(\theta)-\frac{(\log\pi)''(\theta)}{n}$
    % note that, if \Cref{assump_prior1,assump_prior2} are satisfied, then $\|\mle-\map\|\xrightarrow{n\to\infty}0$ almost surely.
    Moreover, note that $$\frac{\logposterior'''(\theta)}{n}=-\alpha'''(\theta)+\frac{\left(\log \pi\right)'''(\theta)}{n}.$$
    Note that $(\log\pi)'''$ is continuous in a neighbourhood of $\theta_0$. Then, almost surely, for large enough $n$ and small enough $\deltabar$, $\frac{\logposterior'''(\theta)}{n}$ is uniformly bounded inside $\{\theta: \|\theta-\map\|\leq \deltabar\}$ (which follows from the fact that \Cref{assump1} is satisfied). Thus, \Cref{assump2} is satisfied for large enough $n$ and small enough $\deltabar$, almost surely.  Using the same assumption that $(\log\pi)'''$ is continuous in a neighbourhood of $\theta_0$, we have that $\alpha''-\frac{(\log \pi)''}{n}$ is continuous in a neighbourhood of $\theta_0$ and for large enough $n$, $\alpha''(\theta)-\frac{(\log \pi)''(\theta)}{n}$ is symmetric, positive definite for all $\theta$ in a (potentially smaller) neighborhood of $\theta_0$. Moreover, as we showed above, $\map\to\theta_0$, almost surely. Therefore, \Cref{assump7} is satisfied for $n$ large enough and for $\deltabar>0$ small enough, almost surely.  
    
    Now, keeping the same assumptions, note that $\max\left\{\|\hat{\theta}_n-\bar{\theta}_n\|,\sqrt{\frac{\Tr\left(\postfisherinformation(\map)^{-1}\right)}{n}}\right\}<\deltabar$ for large enough $n$ and small enough $\deltabar$, almost surely. This is because if $\minevaluemap>0$ then $\Tr\left(\postfisherinformation(\map)^{-1}\right)\leq \frac{d}{\minevaluemap}$, which is uniformly bounded from above for large enough $n$. Note also that $\sqrt{\frac{\Tr\left[\left(\jhat +\frac{\deltamle M_2}{3}I_{d\times d}\right)^{-1}\right]}{n}}<\deltamle$ follows from \Cref{assump_size_of_delta}, which holds for large $n$ and small $\delta>0$, almost surely, as we showed above. Therefore,  \Cref{assump_size_of_delta_bar} is satisfied for large enough $n$ and small enough $\deltamle$ and $\deltabar$, almost surely. Moreover, as $\map\to\theta_0$ almost surely, \Cref{assump_kappa1} is satisfied for large enough $n$ and small enough $\deltabar$, almost surely, by an argument analogous to \cref{kappa_calculation1}.$\blacksquare$
    
\subsection{Proof of Proposition \ref{prop_glm}}\label{proof_glm}
    The proof is inspired by the proof of \citet[Theorem 13]{miller}.
    Note that, for all $\theta\in\Theta,$ $L_n(\theta)=-\sum_{i=1}^n\alpha(\theta^TX_i)+\theta^TS_n$, where $S_n=\sum_{i=1}^nX_is(Y_i)$. Thus, $L_n$ is $C^{\infty}$ on $\Theta$ by the chain rule, since $\alpha$ is $C^{\infty}$ on $\mathcal{E}$ by \citet[Proposition 19]{miller_harrison}. Also, $L_n$ is strictly concave since $\alpha$ is strictly convex \citep[Proposition 19]{miller_harrison}. Now, note that, for all $\theta\in\Theta$, $\frac{L_n(\theta)}{n}\xrightarrow{n\to\infty}f(\theta)$ almost surely (by our assumed condition 2). This implies that, almost surely, $\frac{L_n(\theta)}{n}\xrightarrow{n\to\infty}f(\theta)$, for all $\theta\in\Theta$. In order to show this implication, let us fix a countable dense subset $C$ of $\Theta$. Then, almost surely, $\frac{L_n(\theta)}{n}\xrightarrow{n\to\infty}f(\theta)$ for all $\theta\in C$. Since $\frac{L_n}{n}$ is concave, it follows from \citet[Theorem 10.8]{rockafellar} that, almost surely, the limit $\tilde{f}(\theta):=\lim_n\frac{L_n(\theta)}{n}$ exists and is finite for all $\theta\in\Theta$ and $\tilde{f}$ is concave. As $f$ is also concave, then $\tilde{f}$ and $f$ are continuous functions \cite[Theorem 10.1]{rockafellar} that agree on a dense subset of $\Theta$ so they are equal on $\Theta$.

    Now, note that, by \citet[Theorem 5 and Theorem 13]{miller}, almost surely, there exists MLE $\mle$, such that $L_n'(\mle)=0$ and
    \begin{align}\label{mle_conv}
    \mle\xrightarrow{n\to\infty}\theta_0,\quad\text{almost surely}. 
    \end{align}
    As $L_n$ is strictly concave, this MLE is almost surely unique.

    We will show that, with probability $1$, $\frac{L_n'''}{n}$ is uniformly bounded on $\bar{E}=\{\theta:\|\theta-\theta_0\|\leq\epsilon\}$, where $\epsilon>0$ satisfies condition 4 of Proposition \ref{prop_glm}. Fix $j,k,l\in\{1,\dots,d\}$ and define
   $T(\theta,x)=\alpha'''(\theta^Tx)x_jx_kx_l$
  for $\theta\in\Theta$, $x\in\mathcal{X}$. For all $x\in\mathcal{X}$, $\theta\mapsto T(\theta,x)$ is continuous and for all $\theta\in\Theta$, $x\mapsto T(\theta,x)$ is measurable. Since $L_n'''(\theta)_{j,k,l}=-\sum_{i=1}^n T(\theta,X_i)$, condition 4 above implies that with probability $1$, $\frac{L_n'''(\theta)_{j,k,l}}{n}$ is uniformly bounded on $\bar{E}$, by the uniform law of large numbers \cite[Theorem 1.3.3]{ghosh}. Letting $C_{j,k,l}(X_1,X_2,\dots)$ be such a uniform bound for each $j,k,l$, we have that with probability $1$, for all $n\in\mathbbm{N}$, $\theta\in \bar{E}$, $\left\|\frac{L_n'''(\theta)}{n}\right\|^2=\sum_{j,k,l} \frac{L_n'''(\theta)^2_{j,k,l}}{n^2}\leq \sum_{j,k,l}C_{j,k,l}(X_1,X_2,\dots)^2<\infty$. Hence $\frac{L_n'''}{n}$ is uniformly bounded on $\bar{E}$. Therefore, almost surely, for large enough $n$ and small enough $\deltamle$, \Cref{assump1} is satisfied with a constant $M_2$ not depending on $n$. This is because $\mle\to\theta_0$ almost surely and so, almost surely, for large enough $n$ and $\delta=\epsilon/2$, $\{\theta:\|\theta-\mle\|\leq\delta\}\subseteq \bar{E}$.

    Now, using \citet[Theorem 7]{miller},
    \begin{align}\label{f''_conv}
    f''(\theta_0)\stackrel{a.s.}=\lim_{n\to\infty}\frac{L_n''(\theta_0)}{n}=\lim_{n\to\infty}\left(-\frac{1}{n}\sum_{i=1}^n\alpha''(\theta_0^TX_i)X_iX_i^T\right).
    \end{align}
    This limit exists and is finite almost surely. Therefore, by the strong law of large numbers,
    \begin{align*}
        f''(\theta_0)\stackrel{a.s.}=-\mathbbm{E}\left(\alpha''(\theta_0^TX_i)X_iX_i^T\right).
    \end{align*}
    Therefore, $-f''(\theta_0)$ is positive definite almost surely because $\alpha''(\eta)$ is positive definite for any $\eta$ and so for any nonzero $a\in\mathbbm{R}^d$, we have
    \begin{align*}
        \mathbbm{E}\left[\alpha''(\theta_0^TX_i)a^TX_iX_i^Ta\right]>0
    \end{align*}
    by our assumption that $a^TX_i$ is distinct from zero with a positive probability. Now, by \citet[Theorem 7]{miller}, $\frac{L_n''}{n}$ is $L$-equi-Lipschitz for some $L<\infty$. Therefore, we have that:
    \begin{align}\label{equilip}
        \left\|\frac{L_n''(\mle)}{n}-f''(\theta_0)\right\|\leq L\left\|\mle-\theta_0\right\|+\left\|\frac{L_n''(\theta_0)}{n}-f''(\theta_0)\right\|.
    \end{align}
    It follows that $\frac{L_n''(\mle)}{n}\xrightarrow{n\to\infty}f''(\theta_0)$ almost surely by \cref{mle_conv,f''_conv}.
    This means that, almost surely, for large enough $n$, the minimum eigenvalue of $-\frac{L_n''(\mle)}{n}$ stays positive and lower bounded by a positive number, not depending on $n$, as $f''(\theta_0)$ is positive definite. Therefore, almost surely, \Cref{assump_fisher} is satisfied for large enough $n$ and small enough $\deltamle$. Moreover, for the same reason, \Cref{assump_size_of_delta} is satisfied, for large enough $n$ and small enough $\deltamle>0$, almost surely. Indeed, note that $\Tr\left(\fisherinformation(\mle)^{-1}\right)=\Tr\left(\alpha''(\mle)^{-1}\right)\leq \frac{d}{\minevaluemle}$. Since $\frac{L_n''(\mle)}{n}\xrightarrow{n\to\infty}f''(\theta_0)$ almost surely, we have that, almost surely, for large $n$, $\frac{d}{\minevaluemle}$ is uniformly bounded from above.

    Now,  let $\xi_{\theta}$ be the point on the line connecting $\theta$ and $\theta_0$ that lies on $\{t:\|t-\theta_0\|=\deltamle/2\}$.
    % Note that
    % \begin{align*}
    %      \left|\frac{L_n(\mle)}{n}-\frac{L_n(\theta_0)}{n}\right|\xrightarrow{n\to\infty}0.
    % \end{align*}
    % Indeed, this is because of the convergence $\mle\to\theta_0$ and $\frac{L_n}{n}$ being equiLipschitz on $\{t:\|t-\theta_0\|<\epsilon\}$ for any $\epsilon>0$, by \cite[Theorem 7]{miller}.
    The strict concavity of $L_n$ implies that, almost surely,
    \begin{align}
        &\limsup_{n\to\infty}\sup_{\theta:\|\theta-\mle\|>\deltamle}\frac{L_n(\theta) - L_n(\mle)}{n}\notag\\
        \leq& \limsup_{n\to\infty}\sup_{\theta:\|\theta-\theta_0\|>\deltamle/2}\frac{L_n(\theta)-L_n(\theta_0)}{n}\notag \\
        \leq&\limsup_{n\to\infty}\sup_{\theta:\|\theta-\theta_0\|>\deltamle/2}\frac{\|\theta_0-\theta\|}{\deltamle/2}\cdot\frac{L_n(\xi_{\theta})-L_n(\theta_0)}{n}\notag\\
        \leq&\limsup_{n\to\infty} \sup_{t:\|t-\theta_0\|=\deltamle/2}\frac{L_n(t)-L_n(\theta_0)}{n}\notag\\
        =&\limsup_{n\to\infty} \sup_{t:\|t-\theta_0\|=\deltamle/2}\frac{L_n(t)}{n}-f(\theta_0)\notag\\
        \stackrel{(\ast)}=&\sup_{t:\|t-\theta_0\|=\deltamle/2}f(t)-f(\theta_0)<0.\label{kappa_calculation}
        \end{align}
    Equality $(\ast)$ follows from the fact that, by \citet[Theorem 7]{miller}, the sequence of functions $\left(\frac{L_n}{n}\right)_n$ almost surely converges to $f$ uniformly on the set $\{t:\|t-\theta_0\|=\deltamle/2\}$.
    Now, we note that the function $t\mapsto f(t)-f(\theta_0)$ is continuous and negative on the set $\{t:\|t-\theta_0\|=\deltamle/2\}$, by \citet[Lemma 27 (1)]{miller}. Therefore, its supremum on the compact set $\{t:\|t-\theta_0\|=\deltamle/2\}$ is negative, which justifies the last inequality. Therefore, almost surely, \Cref{assump_kappa} is satisfied, for large enough $n$, small enough $\delta$ and for some $\kappamle>0$ not depending on $n$.

     Furthermore, \Cref{assump_prior1} is satisfied immediately, almost surely for large $n$ and small enough $\delta$, if the prior density $\pi$ is continuous and positive in a neighborhood around $\theta_0$ since $\mle\to\theta_0$ almost surely (\cref{mle_conv}). Similarly \Cref{assump_prior2} is immediately satisfied almost surely if, additionally, $\pi$ is continuously differentiable in a neighborhood of $\theta_0$.
     
 Now, assume that $\pi$ is thrice continuously differentiable on $\Theta$. Then $\logposterior$ is upper-bounded and has at least one global maximum $\theta_n^*\in\Theta$. Using \cref{kappa_calculation}, we note that $\|\theta_n^*-\mle\|\xrightarrow{n\to\infty}0$ almost surely (as $\pi$ is uniformly upper bounded on $\Theta$). We also note that, almost surely, for any fixed neighborhood of $\theta_0$ and for sufficiently large $n$, $-\frac{\logposterior''(\theta)}{n}$ is strictly positive definite for $\theta$ in that neighborhood. This means that, almost surely, for sufficiently large $n$, the MAP $\map$ is unique. Moreover, $\|\map-\mle\|\xrightarrow{n\to\infty}0$ almost surely,which implies that $\map\to\theta_0$ almost surely. Moreover, note that $$\frac{\logposterior'''(\theta)}{n}=\frac{L_n'''(\theta)}{n}+\frac{\left(\log \pi\right)'''(\theta)}{n}.$$
    Hence, if $(\log\pi)'''$ is continuous in a neighbourhood of $\theta_0$ then, almost surely, for large enough $n$ and small enough $\deltabar$, $\frac{\logposterior'''(\theta)}{n}$ is uniformly bounded inside $\{\theta: \|\theta-\map\|\leq \deltabar\}$ (which follows from the fact that \Cref{assump1} holds almost surely and that $\|\mle-\map\|\to 0$ almost surely). Thus, almost surely, \Cref{assump2} is satisfied for large enough $n$ and small anough $\deltabar$ with $\overline{M}_2$ independent of $n$.  

    Now, as in \cref{equilip}, we have that
    \begin{align*}
        \left\|\frac{L_n''(\map)}{n}-f''(\theta_0)\right\|\leq L\left\|\map-\theta_0\right\|+\left\|\frac{L_n''(\theta_0)}{n}-f''(\theta_0)\right\|
    \end{align*}
    and so $\frac{L_n''(\map)}{n}\xrightarrow{n\to\infty} f''(\theta_0)$ almost surely. As a result, almost surely, for large enough $n$, the minimum eigenvalue of $-\frac{L_n''(\map)}{n}$ stays lower bounded by a positive number not depending on $n$. It follows that, if $(\log\pi)'''$ is continuous in a neighbourhood of $\theta_0$, then, almost surely, for large $n$, the minimum eigenvalue of $-\frac{L_n''(\map)}{n}-\frac{(\log \pi)''(\map)}{n}$ stays lower bounded by a positive number not depending on $n$. Therefore, almost surely, \Cref{assump7} is also satisfied, for $n$ large enough and for $\deltabar>0$ small enough.  
    
    Still assuming that $(\log\pi)'''$ is continuous in a neighborhood of $\theta_0$, note that, almost surely, $\max\left\{\|\hat{\theta}_n-\bar{\theta}_n\|,\sqrt{\frac{\Tr\left(\postfisherinformation(\map)^{-1}\right)}{n}}\right\}<\deltabar$ for large enough $n$ and small enough $\deltabar$. This is because $\Tr\left(\postfisherinformation(\map)^{-1}\right)\leq \frac{d}{\minevaluemap}$ and $\|\hat{\theta}_n-\bar{\theta}_n\|\to 0$ almost surely. Moreover, note that $\sqrt{\frac{\Tr\left[\left(\jhat+\frac{\deltamle M_2}{3}I_{d\times d}\right)^{-1}\right]}{n}}< \deltamle$ if \Cref{assump_size_of_delta} is satisfied. Therefore, almost surely, \Cref{assump_size_of_delta_bar} is satisfied for large enough $n$ and small enough $\deltamle$ and $\deltabar$. Furthermore, as $\map\to\theta_0$ almost surely then also almost surely \Cref{assump_kappa1} is satisfied for large enough $n$ and small enough $\deltabar$, by an argument analogous to \cref{kappa_calculation}.$\blacksquare$

	\section{Introductory arguments for the proofs of Theorems \ref{main_map} -- \ref{w2_map}, \ref{theorem_main} -- \ref{theorem_2wasserstein} and \ref{theorem_univariate}}\label{introductory_arguments}
	\subsection{Introduction}
	In order to prove the main results of this paper, we let $g:\mathbbm{R}^d\to\mathbbm{R}$ and consider:
	\begin{align}
		D_g^{MLE}:=&\left|\mathbbm{E}\left[g\left(\sqrt{n}(\postparam-\mle)\right)\right]-\mathbbm{E}_{Z_n\sim\mathcal{N}(0,\fisherinformation(\mle)^{-1})}\left[g(Z_n)\right]\right|;\label{dgmle}\\
		D_g^{MAP}:=&\left|\mathbbm{E}\left[g\left(\sqrt{n}(\postparam-\map)\right)\right]-\mathbbm{E}_{\bar{Z}_n\sim\mathcal{N}(0,\postfisherinformation(\map)^{-1})}\left[g(\bar{Z}_n)\right]\right|.\label{dgmap}
	\end{align}
	The following lemma will be useful in the sequel:
	\begin{lemma}\label{lemma1}
		Let $Z\sim\mathcal{N}(0,\Sigma)$. Then, for any $t>0$,
		\begin{align}
			&\mathbbm{P}\left[\left\|Z\right\|-\sqrt{\Tr(\Sigma)}\geq \sqrt{2\left\|\Sigma\right\|_{op}t}\right]\leq e^{-t};\label{concentration1}\\
			&\mathbbm{P}\left[\left\|Z\right\|^2-\Tr(\Sigma)\geq 2\sqrt{\Tr\left(\Sigma^2\right)t}+2\left\|\Sigma\right\|_{op}t\right]\leq e^{-t}.\label{concentration2}
		\end{align}
	\end{lemma}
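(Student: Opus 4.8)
The plan is to recognize both inequalities as standard Gaussian concentration statements and to prove each by reducing to a canonical form: for \eqref{concentration1} I would invoke the Lipschitz (Borell--TIS) concentration of Gaussian measure, while for \eqref{concentration2} I would diagonalize $\Sigma$ and run a Chernoff argument on the resulting weighted sum of independent $\chi^2_1$ variables, i.e.\ the Laurent--Massart bound.

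For \eqref{concentration1}, write $Z = \Sigma^{1/2} g$ with $g \sim \mathcal{N}(0, I_{d\times d})$ and set $F(g) := \|\Sigma^{1/2}g\| = \|Z\|$. Since $|F(g) - F(h)| \le \|\Sigma^{1/2}(g-h)\| \le \sqrt{\|\Sigma\|_{op}}\,\|g-h\|$, the map $F$ is Lipschitz with constant $\sqrt{\|\Sigma\|_{op}}$. The Gaussian concentration inequality then gives
\[
\mathbbm{P}\left[\|Z\| - \mathbbm{E}\|Z\| \ge r\right] \le \exp\left(-\frac{r^2}{2\|\Sigma\|_{op}}\right)
\]
for every $r>0$. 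By Jensen's inequality $\mathbbm{E}\|Z\| \le \sqrt{\mathbbm{E}\|Z\|^2} = \sqrt{\Tr(\Sigma)}$, so the event in \eqref{concentration1} is contained in $\{\|Z\| - \mathbbm{E}\|Z\| \ge r\}$; taking $r = \sqrt{2\|\Sigma\|_{op}\,t}$ makes the right-hand side equal to $e^{-t}$, which is exactly \eqref{concentration1}.

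For \eqref{concentration2}, I would diagonalize $\Sigma = O\Lambda O^T$ with $\Lambda = \mathrm{diag}(\lambda_1,\dots,\lambda_d)$, $\lambda_i \ge 0$, so that $\|Z\|^2 = g^T\Sigma g \stackrel{d}{=} \sum_{i=1}^d \lambda_i \xi_i^2$ for i.i.d.\ $\xi_i \sim \mathcal{N}(0,1)$ and hence $\|Z\|^2 - \Tr(\Sigma) = \sum_i \lambda_i(\xi_i^2 - 1)$. Using the moment generating function $\mathbbm{E}[e^{s\lambda_i(\xi_i^2-1)}] = e^{-s\lambda_i}(1 - 2s\lambda_i)^{-1/2}$, valid for $0 \le s < (2\lambda_i)^{-1}$, together with the elementary inequality $-u - \tfrac12\log(1-2u) \le \tfrac{u^2}{1-2u}$ applied with $u = s\lambda_i$, the log-moment generating function is bounded by
\[
\log \mathbbm{E}\left[e^{s(\|Z\|^2 - \Tr(\Sigma))}\right] \le \sum_{i=1}^d \frac{s^2 \lambda_i^2}{1 - 2s\lambda_i} \le \frac{s^2 \Tr(\Sigma^2)}{1 - 2s\|\Sigma\|_{op}},
\]
where I used $\sum_i \lambda_i^2 = \Tr(\Sigma^2)$ and $\max_i \lambda_i = \|\Sigma\|_{op}$ to bound each denominator and the numerator sum. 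A Chernoff bound followed by optimization over $s \in (0,(2\|\Sigma\|_{op})^{-1})$ then yields $\mathbbm{P}[\|Z\|^2 - \Tr(\Sigma) \ge 2\sqrt{\Tr(\Sigma^2)t} + 2\|\Sigma\|_{op}t] \le e^{-t}$, which is \eqref{concentration2}.

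The only genuinely computational step is the Chernoff optimization in the last display; everything else is a direct application of standard principles (the Borell--TIS inequality for the norm, and the explicit $\chi^2$ moment generating function for the squared norm). I would therefore either carry out that one-line optimization explicitly or simply cite the Laurent--Massart inequality for weighted chi-squares, noting that with $a_i = \lambda_i$ the identities $\|a\|_2 = \sqrt{\Tr(\Sigma^2)}$ and $\|a\|_\infty = \|\Sigma\|_{op}$ make the correspondence exact.
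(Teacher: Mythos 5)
Your proposal is correct, but it is structured quite differently from the paper's proof. The paper treats \eqref{concentration2} as the primitive fact, citing it directly from Hsu--Kakade--Zhang (their Proposition 1), and then obtains \eqref{concentration1} \emph{from} \eqref{concentration2} by pure algebra: squaring the event $\{\|Z\|-\sqrt{\Tr(\Sigma)}\geq\sqrt{2\|\Sigma\|_{op}t}\}$ rewrites it as $\{\|Z\|^2-\Tr(\Sigma)\geq 2\|\Sigma\|_{op}t+2\sqrt{2\Tr(\Sigma)\|\Sigma\|_{op}t}\}$, and since $\Tr(\Sigma^2)\leq\|\Sigma\|_{op}\Tr(\Sigma)$ this event is contained in the one appearing in \eqref{concentration2}, so no additional probabilistic tool is needed. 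You instead prove the two inequalities independently: \eqref{concentration1} via Borell--TIS concentration of the $\sqrt{\|\Sigma\|_{op}}$-Lipschitz map $g\mapsto\|\Sigma^{1/2}g\|$ together with Jensen's bound $\mathbbm{E}\|Z\|\leq\sqrt{\Tr(\Sigma)}$, and \eqref{concentration2} by diagonalizing $\Sigma$ and running the Laurent--Massart Chernoff argument on the weighted chi-square sum. Both routes are sound; I checked your MGF bound $-u-\tfrac12\log(1-2u)\leq u^2/(1-2u)$ and the Chernoff optimization (your bound is of the sub-gamma form $s^2\Tr(\Sigma^2)/(1-2s\|\Sigma\|_{op})$, which yields exactly the deviation $2\sqrt{\Tr(\Sigma^2)t}+2\|\Sigma\|_{op}t$ at level $e^{-t}$). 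The paper's approach buys brevity and a single external dependency, with \eqref{concentration1} coming essentially for free; your approach buys self-containedness (you actually prove the quadratic bound rather than cite it) and invokes a second, independent piece of machinery (Gaussian Lipschitz concentration) whose reach extends beyond norms of Gaussians, at the cost of being longer and requiring the reader to trust or verify the Chernoff computation.
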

	\begin{proof}
		The proof follows an argument similar to the one of \citet[page 135]{vershynin}. Specifically, \cref{concentration2} comes from \citet[Proposition 1]{hsu}. In order to prove \cref{concentration1}, we note that
		\begin{align*}
			\mathbbm{P}\left[\left\|Z\right\|-\sqrt{\Tr(\Sigma)}\geq \sqrt{2\left\|\Sigma\right\|_{op}t}\right]=&\mathbbm{P}\left[\left\|Z\right\|^2-\Tr(\Sigma)\geq 2\left\|\Sigma\right\|_{op} t + 2\sqrt{2\Tr\left(\Sigma\right)\left\|\Sigma\right\|_{op}t}\right]\\
			\leq & \mathbbm{P}\left[\left\|Z\right\|^2-\Tr(\Sigma)\geq 2\sqrt{\Tr\left(\Sigma^2\right)t}+2\left\|\Sigma\right\|_{op}t\right]\leq e^{-t}.
		\end{align*}
	\end{proof}
	\subsection{Initial decomposition of the distances $D_g^{MLE}$ and $D_g^{MAP}$}
	Now, let
	\begin{align*}
	h_g^{MLE}(u):=g(u)-\frac{n^{-d/2}}{C^{MLE}_n} \int_{\|t\|\leq \deltamle\sqrt{n}}g(t)\posterior(n^{-1/2} t+\mle) dt,\\
	h_g^{MAP}(u):=g(u)-\frac{n^{-d/2}}{C^{MAP}_n} \int_{\|t\|\leq \deltabar\sqrt{n}}g(t)\posterior(n^{-1/2} t+\map) dt,
	\end{align*}
	for
	\begin{align}
	 C_n^{MLE}:=n^{-d/2}\int_{\|u\|\leq\deltamle\sqrt{n}}\posterior(n^{-1/2} u+\mle)du\label{c_n_mle}\\
	 C_n^{MAP}:=n^{-d/2}\int_{\|u\|\leq\deltabar\sqrt{n}}\posterior(n^{-1/2} u+\map)du.\notag
	\end{align}
	Note that, for
	\begin{align}\label{fn_mle}
	F_n^{MLE}:=\int_{\|u\|\leq\deltamle\sqrt{n}}\frac{\sqrt{\det \jhat}e^{-u^T\fisherinformation(\mle)u/2}}{(2\pi)^{d/2}}du,
	\end{align}
	we have
	\begin{align}
		&D_g^{MLE}\notag\\
		=& \left|\int_{\mathbbm{R}^d}h_g^{MLE}(u)\frac{\sqrt{\det \jhat}e^{-u^T\fisherinformation(\hat{\theta}_n)u/2}}{(2\pi)^{d/2}}du\right.\notag\\
		&\left.\hspace{5cm}-n^{-d/2} \int_{\|u\|>\deltamle\sqrt{n}} h_g^{MLE}(u) \posterior(n^{-1/2} u+\mle)du\right|\notag\\
		\leq &\frac{1}{F_n^{MLE}}\left|\int_{\|u\|\leq\deltamle\sqrt{n}}h_g^{MLE}(u)\frac{\sqrt{\det \jhat}e^{-u^T\fisherinformation(\mle)u/2}}{(2\pi)^{d/2}}du\right|\notag\\
		&+\left|\int_{\|u\|>\deltamle\sqrt{n}}h_g^{MLE}(u)\left[\frac{\sqrt{\det \jhat}e^{-u^T\fisherinformation(\mle)u/2}}{(2\pi)^{d/2}}-n^{-d/2}\,\posterior(n^{-1/2} u+\mle)\right] du\right|\notag\\
		=&\left|\int_{\|u\|\leq\deltamle\sqrt{n}}g(u)\frac{\sqrt{\det \jhat}e^{-u^T\fisherinformation(\mle)u/2}}{F_n^{MLE}(2\pi)^{d/2}}du\right.\notag\\
		&\left.\hspace{6cm}-\frac{n^{-d/2}}{C_n^{MLE}} \int_{\|u\|\leq\deltamle\sqrt{n}}g(u)\posterior(n^{-1/2} u+\mle) du\right|\notag\\
		&+\left|\int_{\|u\|>\deltamle\sqrt{n}}h^{MLE}_g(u)\left[\frac{\sqrt{\det \jhat}e^{-u^T\fisherinformation(\mle)u/2}}{(2\pi)^{d/2}}-n^{-d/2}\,\posterior(n^{-1/2} u+\mle)\right] du\right|\notag\\
		=:&I_1^{MLE}+I_2^{MLE}.\label{i1i2mle}
	\end{align}

In a similar manner, for
\begin{align}
	F_n^{MAP}:=\int_{\|u\|\leq\deltabar\sqrt{n}}\frac{\sqrt{\det \jbar}e^{-u^T\postfisherinformation(\map)u/2}}{(2\pi)^{d/2}}du,\label{f_n_map}
\end{align}
we have
\begin{align}
	&D_g^{MAP}\notag\\
	\leq&\left|\int_{\|u\|\leq\deltabar\sqrt{n}}g(u)\frac{\sqrt{\det \jbar}e^{-u^T\postfisherinformation(\map)u/2}}{F_n^{MAP}(2\pi)^{d/2}}du\right.\notag\\
	&\left.\hspace{6cm}-\frac{n^{-d/2}}{C_n^{MAP}} \int_{\|u\|\leq\deltabar\sqrt{n}}g(u)\posterior(n^{-1/2} u+\map) du\right|\notag\\
	&+\left|\int_{\|u\|>\deltabar\sqrt{n}}h^{MAP}_g(u)\left[\frac{\sqrt{\det \jbar}e^{-u^T\postfisherinformation(\map)u/2}}{(2\pi)^{d/2}}-n^{-d/2}\,\posterior(n^{-1/2} u+\map)\right] du\right|\notag\\
	=:&I_1^{MAP}+I_2^{MAP}. \label{i1i2map}
\end{align}

\subsection{Strategies for controlling terms $I_1^{MLE}$ and $I_1^{MAP}$}
In order to control $I_1^{MAP}$ in \Cref{main_map,thm:wasserstein_map,w2_map} and $I_1^{MLE}$ in \Cref{theorem_main,theorem_1wasserstein,theorem_2wasserstein}, we apply the log-Sobolev inequality and the associated transportation inequalities. Before we introduce those concepts, we define the following Kullback-Leibler divergence (or relative entropy, see e.g. \citealt[Section 1.6.1]{bishop}) between two measures $\nu$ and $\mu$ such that $\nu\ll\mu$:
\begin{align}\label{def:kl_divergence}
\text{KL}(\nu\|\mu)=\int\log\left(\frac{d\nu}{d\mu}(x)\right)\nu(dx).
\end{align}
We also define the following Fisher divergence between two measures $\nu$ and $\mu$ such that $\nu\ll\mu$
\begin{align}\label{def:fisher_divergence}
	\text{Fisher}(\nu\|\mu)=\int\left\|\left(\log\left(\frac{d\nu}{d\mu}\right)\right)'(x)\right\|^2\nu(dx).
\end{align}
The following definition, which can be found in  \citet[Chapter 5]{bakry_gentil_ledoux} and \citet[Section 2.2]{vempala_wibisono}, will play a central role in our proofs:
\begin{definition}[Log-Sobolev inequality (LSI)]\label{def:log-sobolev}
Let $\mu$ be a probability measure on $\Omega\subset\mathbbm{R}^d$ and let $\alpha>0$. We say that $\mu$ satisfies the log-Sobolev inequality with constant $\alpha$, or LSI($\alpha$) for short, if 
\begin{align}\label{general_lsi}
	\int f^2\log f^2 d\mu - \int f^2d\mu \log\left(\int f^2 d\mu\right)\leq \frac{2}{\alpha}\int\left\| f'\right\|^2d\mu,\quad\text{for all }f:\Omega\to\mathbbm{R}^+.
\end{align}
Equivalently, we say that $\mu$ satisfies LSI($\alpha$) if for any probability measure $\nu$ on $\Omega$, such that $\mu\ll\nu$ and $\nu\ll\mu$,
\begin{align}\label{measure_lsi}
	\text{KL}(\nu\|\mu)\leq \frac{1}{2\alpha}\text{Fisher}(\nu\|\mu).
\end{align}
\end{definition}
\begin{remark}
	The equivalence between the two definitions of the log-Sobolev inequality LSI($\alpha$), given by \cref{general_lsi,measure_lsi}, follows by the following argument. If $\nu$ is a probability measure on $\Omega$ such that $\mu$ and $\nu$ are equivalent then we obtain \cref{measure_lsi} by setting $f^2=\frac{d\nu}{d\mu}$ in \cref{general_lsi}. On the other hand, for a function $f:\Omega\to\mathbbm{R}^+$, let $\nu$ be a probability measure on $\Omega$ with density $\nu(dx)\propto f^2(x)\mu(dx)$. Plugging this $\nu$ into \cref{measure_lsi} yields \cref{general_lsi}.
\end{remark}

One of the fundamental results we will use is the \textit{Bakry-\'Emery} criterion. Its first general version was proved in \citet{bakry-emery}. Below we state its version for measures on $\mathbbm{R}^d$ truncated to convex sets (see \citealt[Theorem 2.1]{kolesnikov} and \citealt[Theorem A1]{Schlichting:2019}):
\begin{proposition}[Bakry-\'Emery criterion]\label{prop_lsi}
Let $\Omega\subset\mathbbm{R}^d$ be convex, let $H:\Omega\to\mathbbm{R}$ and consider a probability measure $\mu(dx)\propto e^{-H(x)}\mathbbm{1}_\Omega(x)dx$. Assume that $H''(x)\succeq \alpha I_{d\times d}$, for some $\alpha>0$. Then $\mu$ satisfies LSI($\alpha$) (see Definition \cref{def:log-sobolev}).
\end{proposition}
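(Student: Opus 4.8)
The plan is to prove \Cref{prop_lsi} by the classical Bakry--\'Emery semigroup argument, adapted to the truncated domain by reflecting boundary conditions. To $\mu(dx)\propto e^{-H(x)}\mathbbm{1}_\Omega(x)\,dx$ I associate the overdamped Langevin diffusion on $\Omega$ with generator $\mathcal{L}f=\Delta f-\langle\nabla H,\nabla f\rangle$ and normal reflection on $\partial\Omega$, for which $\mu$ is the reversible invariant measure; let $(P_t)_{t\ge 0}$ denote the corresponding Markov semigroup. The entire argument is driven by the carr\'e-du-champ $\Gamma(f)=|\nabla f|^2$ and its iterate $\Gamma_2(f)=\tfrac12\mathcal{L}\Gamma(f)-\Gamma(f,\mathcal{L}f)$, for which Bochner's formula gives the pointwise identity $\Gamma_2(f)=\|\nabla^2 f\|_{\mathrm{HS}}^2+\langle\nabla f,(\nabla^2 H)\nabla f\rangle$. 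Since $\|\nabla^2 f\|_{\mathrm{HS}}^2\ge 0$ and $\nabla^2 H\succeq\alpha I_{d\times d}$ by hypothesis, this yields the curvature bound $\Gamma_2(f)\ge\alpha\,\Gamma(f)$.

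First I would derive the gradient commutation estimate $|\nabla P_t f|^2\le e^{-2\alpha t}P_t(|\nabla f|^2)$. Fixing $t>0$ and setting $\psi(s):=P_s(|\nabla P_{t-s}f|^2)$ for $s\in[0,t]$, the diffusion identities give $\psi'(s)=2P_s(\Gamma_2(P_{t-s}f))$, so the curvature bound forces $\psi'(s)\ge 2\alpha\,\psi(s)$; integrating $(e^{-2\alpha s}\psi(s))'\ge 0$ from $0$ to $t$ and reading off $\psi(0)=|\nabla P_t f|^2$ and $\psi(t)=P_t(|\nabla f|^2)$ proves the claim. Next I would combine the curvature bound with the de Bruijn entropy-dissipation identity: for positive $f$ normalized by $\int f\,d\mu=1$, integration by parts gives $\tfrac{d}{dt}\mathrm{Ent}_\mu(P_t f)=-I(P_t f)$ with Fisher information $I(g):=\int |\nabla g|^2/g\,d\mu$, and since $P_tf\to1$ we get $\mathrm{Ent}_\mu(f)=\int_0^\infty I(P_t f)\,dt$. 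The same $\Gamma_2\ge\alpha\Gamma$ estimate forces exponential decay $I(P_t f)\le e^{-2\alpha t}I(f)$, and integrating in $t$ yields $\mathrm{Ent}_\mu(f)\le\tfrac{1}{2\alpha}I(f)$. Taking $f=g^2$ and using $I(g^2)=4\int|\nabla g|^2\,d\mu$ recovers precisely \eqref{general_lsi} with constant $\alpha$, which by the remark following \Cref{def:log-sobolev} is equivalent to the measure form \eqref{measure_lsi}, as required.

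The main obstacle is the boundary of $\Omega$. In the whole-space case the identity $\psi'(s)=2P_s(\Gamma_2(\cdot))$, the integration-by-parts formula $\int \mathcal{L}g\,h\,d\mu=-\int\Gamma(g,h)\,d\mu$, and the Fisher-information decay are all clean; on a bounded domain each integration by parts produces a boundary term, the decisive one being of the form $\int_{\partial\Omega}\mathrm{I\!I}(\nabla f,\nabla f)\,d\sigma$ with $\mathrm{I\!I}$ the second fundamental form of $\partial\Omega$. This is exactly where convexity of $\Omega$ enters: for a convex domain $\mathrm{I\!I}\succeq 0$, so the boundary contribution has the right sign and only reinforces $\Gamma_2\ge\alpha\Gamma$, leaving the differential inequalities intact. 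Making this rigorous requires either approximating $\Omega$ by smooth convex domains and passing to the limit, or working directly with the Neumann semigroup on a convex set; both are standard but technical, so in a full write-up I would invoke the stated forms in \cite{kolesnikov,Schlichting:2019} rather than reproduce the boundary analysis. An alternative route, yielding the same conclusion and again hinging on convexity of $\Omega$ (which keeps $W_2$-geodesics inside $\Omega$), would be to deduce the log-Sobolev inequality from the $\alpha$-geodesic convexity of the relative entropy via the Otto--Villani theorem.
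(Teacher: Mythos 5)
The paper never proves \Cref{prop_lsi}: it is imported as a black box from \cite{kolesnikov} and \cite{Schlichting:2019}, so there is no internal argument to compare yours against, and the relevant question is whether your sketch correctly reconstructs the cited result. It essentially does. The Bochner identity $\Gamma_2(f)=\|\nabla^2 f\|_{\mathrm{HS}}^2+\langle\nabla f,(\nabla^2 H)\nabla f\rangle$ for the generator $\mathcal{L}f=\Delta f-\langle\nabla H,\nabla f\rangle$ gives the curvature bound $\Gamma_2\ge\alpha\,\Gamma$ under $H''\succeq\alpha I_{d\times d}$; the interpolation $\psi(s)=P_s(|\nabla P_{t-s}f|^2)$, de Bruijn's identity, and the exponential decay of Fisher information then yield $\mathrm{Ent}_\mu(g^2)\le\frac{2}{\alpha}\int|\nabla g|^2\,d\mu$, which is exactly \cref{general_lsi} with constant $\alpha$, and the passage to \cref{measure_lsi} is the remark following \Cref{def:log-sobolev}. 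You also put your finger on the one place where truncation matters: each integration by parts produces a boundary term, and convexity of $\Omega$ gives the second fundamental form of $\partial\Omega$ the favorable sign, so the whole-space computation survives normal reflection. The caveat is that, as you concede, your write-up defers precisely this boundary analysis --- construction and regularity of the reflected (Neumann) semigroup on a convex set whose boundary need not be smooth, justification of differentiating under $P_s$, and the approximation by smooth convex domains --- to \cite{kolesnikov} and \cite{Schlichting:2019}, i.e.\ to the very statements being proved. So what you have is a faithful reconstruction of the mechanism behind the citation rather than an independent, self-contained proof; a complete write-up would need to carry out that approximation argument in full, or instead run the Otto--Villani/HWI route you mention at the end, which likewise uses convexity of $\Omega$ (to keep $W_2$-geodesics, i.e.\ straight-line displacement interpolations, inside the domain) and derives $\mathrm{LSI}(\alpha)$ from the $\alpha$-displacement convexity of the entropy.
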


The following Holley-Stroock perturbation principle (see \citealt[page 1184]{holley-stroock}, \citealt[Theorem A2]{Schlichting:2019}) will be crucial in our proofs concerning the MLE-centric approach: 
\begin{proposition}[Holley-Stroock perturbation principle]
	\label{holley-stroock}
Let $\Omega\subset\mathbb{R}^d$ and $H:\Omega\to\mathbbm{R}$. Let $\psi:\Omega\to\mathbbm{R}^d$ be a bounded function. Let $\mu$ and $\tilde{\mu}$ be probability measures with densities of the form $\mu(dx)\propto e^{-H(x)}\mathbbm{1}_{\Omega}(x)dx$ and $\tilde{\mu}(dx)\propto e^{-H(x)-\psi(x)}\mathbbm{1}_{\Omega}(x)dx$. Suppose that $\mu$ satisfies the LSI($\alpha$) (see Definition \cref{def:log-sobolev}). Then $\tilde{\mu}$ satisfies  LSI($e^{-\text{osc}\psi}\alpha$), where $\text{osc}\psi:=\sup_{\Omega}\psi-\inf_{\Omega}\psi$.
\end{proposition}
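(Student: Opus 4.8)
The plan is to reduce the claim to the functional form \cref{general_lsi} of the log-Sobolev inequality and then exploit the two-sided comparison of $\tilde\mu$ and $\mu$ that the boundedness of $\psi$ provides. I would write $Z_\mu := \int_\Omega e^{-H}\,dx$ and $Z_{\tilde\mu} := \int_\Omega e^{-H-\psi}\,dx$ for the two normalizing constants and set $\psi_{\min} := \inf_\Omega\psi$, $\psi_{\max} := \sup_\Omega\psi$, so that $\osc\psi = \psi_{\max}-\psi_{\min}$ (finite because $\psi$ is bounded). The Radon–Nikodym derivative is then $\frac{d\tilde\mu}{d\mu}(x) = \frac{Z_\mu}{Z_{\tilde\mu}}e^{-\psi(x)}$, which is pinched between $\frac{Z_\mu}{Z_{\tilde\mu}}e^{-\psi_{\max}}$ and $\frac{Z_\mu}{Z_{\tilde\mu}}e^{-\psi_{\min}}$.

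First I would record the variational representation of entropy: for any nonnegative $g$, writing $\phi_c(g) := g\log(g/c)-g+c$,
\begin{align*}
\mathrm{Ent}_{\tilde\mu}(g) = \inf_{c>0}\int\phi_c(g)\,d\tilde\mu,
\end{align*}
with the infimum attained at $c=\int g\,d\tilde\mu$. The decisive feature of this formulation is that $\phi_c(g)$ is \emph{pointwise nonnegative} for all $g,c\ge 0$, a consequence of the convexity of $t\mapsto t\log t$ (indeed $\phi_c$ is minimized at $g=c$, where it vanishes). This nonnegativity is the crux: it lets me compare the $\tilde\mu$- and $\mu$-integrals of $\phi_c(g)$ using only the one-sided estimate $e^{-\psi}\le e^{-\psi_{\min}}$, which would fail for a signed integrand.

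Next I would carry out the two comparisons. Choosing $c = \int g\,d\mu$ (the minimizer adapted to $\mu$, which is admissible since the formula gives an inequality for every $c$) and using nonnegativity of $\phi_c$,
\begin{align*}
\mathrm{Ent}_{\tilde\mu}(g) &\le \int\phi_c(g)\,d\tilde\mu = \frac{Z_\mu}{Z_{\tilde\mu}}\int\phi_c(g)\,e^{-\psi}\,d\mu \\
&\le \frac{Z_\mu}{Z_{\tilde\mu}}\,e^{-\psi_{\min}}\int\phi_c(g)\,d\mu = \frac{Z_\mu}{Z_{\tilde\mu}}\,e^{-\psi_{\min}}\,\mathrm{Ent}_\mu(g).
\end{align*}
For the Dirichlet energy, since $\|f'\|^2\ge 0$, the reverse one-sided estimate $e^{-\psi}\ge e^{-\psi_{\max}}$ yields
\begin{align*}
\int\|f'\|^2\,d\mu \le \frac{Z_{\tilde\mu}}{Z_\mu}\,e^{\psi_{\max}}\int\|f'\|^2\,d\tilde\mu.
\end{align*}

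Finally I would chain these through the hypothesis LSI($\alpha$) for $\mu$. Taking $g=f^2$ and combining the three displays, the normalizing-constant ratios cancel and I obtain
\begin{align*}
\mathrm{Ent}_{\tilde\mu}(f^2) &\le \frac{Z_\mu}{Z_{\tilde\mu}}\,e^{-\psi_{\min}}\cdot\frac{2}{\alpha}\int\|f'\|^2\,d\mu \\
&\le \frac{2}{\alpha}\,e^{\psi_{\max}-\psi_{\min}}\int\|f'\|^2\,d\tilde\mu = \frac{2}{e^{-\osc\psi}\,\alpha}\int\|f'\|^2\,d\tilde\mu,
\end{align*}
which is exactly LSI($e^{-\osc\psi}\alpha$) for $\tilde\mu$ in the form \cref{general_lsi}. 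The only genuine obstacle is setting up and correctly invoking the variational formula for entropy with its nonnegative integrand; once that is secured, the boundedness of $\psi$ does all the remaining work, and crucially no smoothness or convexity of $H$ or $\Omega$ is required, so the principle applies to the truncated posteriors arising in our MLE-centric proofs.
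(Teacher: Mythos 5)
Your proof is correct: the variational representation $\mathrm{Ent}_{\nu}(g)=\inf_{c>0}\int\left[g\log(g/c)-g+c\right]d\nu$ with its pointwise nonnegative integrand, the two one-sided comparisons of $d\tilde{\mu}$ against $d\mu$, and the cancellation of the normalizing-constant ratios constitute exactly the classical Holley--Stroock argument, and the chaining indeed yields LSI($e^{-\osc\psi}\alpha$) in the form of \cref{general_lsi}. The paper does not prove this proposition itself but quotes it from \cite{holley-stroock} and \cite[Theorem A2]{Schlichting:2019}, and your argument is essentially the proof given in those references, so there is no discrepancy to report.
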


In order to control $I_1^{MAP}$ and $I_1^{MLE}$ in our proofs, we control the Fisher divergence between the rescaled posterior and the Gaussian inside the ball around the MAP or the MLE, establish the log-Sobolev inequality inside that ball for the rescaled posterior and thus control the KL divergence. In order to obtain our bounds on the total variation distance in \Cref{main_map,theorem_main}, we will use the following result:
\begin{proposition}[Pinsker's inequality, e.g. {\citealt[Theorem 2.16]{massart}}]\label{pinsker}
For any two probability measures $\mu$ and $\nu$ such that $\nu\ll\mu$, we have that
\begin{align*}
	\text{TV}(\mu,\nu)\leq \sqrt{\frac{1}{2}\text{KL}\left(\nu\|\mu\right)}.
\end{align*}
\end{proposition}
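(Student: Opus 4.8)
The plan is to obtain the sharp constant $1/2$ by reducing to the two-point case and invoking the monotonicity (data-processing) property of the KL divergence. First I would fix an arbitrary measurable set $A$ and write $p:=\nu(A)$, $q:=\mu(A)$. Pushing both $\nu$ and $\mu$ forward under the indicator map $x\mapsto\mathbbm{1}_A(x)$ produces two Bernoulli laws, and the key reduction step is that this coarsening cannot increase the relative entropy, so that $\text{KL}(\nu\|\mu)\geq d(p\|q)$, where $d(p\|q):=p\log(p/q)+(1-p)\log((1-p)/(1-q))$ denotes the binary relative entropy. This monotonicity follows from the joint convexity of $(a,b)\mapsto a\log(a/b)$, equivalently from the log-sum inequality applied to the partition $\{A,A^{c}\}$; the hypothesis $\nu\ll\mu$ guarantees that all the Radon--Nikodym derivatives and logarithms involved are well defined.

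Second, I would establish the scalar Pinsker bound $d(p\|q)\geq 2(p-q)^{2}$ for $p,q\in(0,1)$. Fixing $q$ and setting $\phi(p):=d(p\|q)-2(p-q)^{2}$, a direct computation gives $\phi(q)=0$ and $\phi'(q)=0$, while $\phi''(p)=\frac{1}{p(1-p)}-4\geq 0$ because $p(1-p)\leq 1/4$. Hence $\phi$ is convex with its unique minimum at $p=q$, so $\phi\geq 0$ everywhere, which is the claimed inequality. The boundary cases in which $p$ or $q$ equals $0$ or $1$ are covered by the conventions $0\log 0=0$ together with a short continuity argument.

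Combining the two steps yields $\text{KL}(\nu\|\mu)\geq 2\bigl(\mu(A)-\nu(A)\bigr)^{2}$ for every measurable $A$. Taking the supremum over $A$ and recalling that $\text{TV}(\mu,\nu)=\sup_{A}|\mu(A)-\nu(A)|$ gives $\text{KL}(\nu\|\mu)\geq 2\,\text{TV}(\mu,\nu)^{2}$, which rearranges to the stated bound.

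I expect the main obstacle to be the first step, namely the careful justification that coarsening to the binary partition $\{A,A^{c}\}$ does not increase the KL divergence. This is the conceptual bridge between the supremum-over-sets definition of total variation and the integral definition of relative entropy; everything else is either a one-variable calculus exercise (the scalar inequality) or bookkeeping (taking the supremum). An alternative, purely analytic route avoids this reduction entirely: writing $f:=d\nu/d\mu$ one has $\text{TV}(\mu,\nu)=\tfrac12\int|f-1|\,d\mu$, and after applying Cauchy--Schwarz with the weight $2f+4$ together with the pointwise estimate $f\log f-f+1\geq \tfrac{3(f-1)^{2}}{2f+4}$ one recovers a bound of the same form; the difficulty there is simply that the most transparent version of that pointwise estimate yields a slightly suboptimal constant, so some care is needed to land exactly on $1/2$.
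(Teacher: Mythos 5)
Your proof is correct. Note that the paper does not prove this proposition at all: it is quoted as a known result with a citation to Massart's book (Theorem 2.16 there), and the statement is then used as a black box in the proofs of Theorems \ref{main_map} and \ref{theorem_main}. So there is no in-paper argument to compare against; what you have supplied is the classical self-contained proof, and it is sound. The reduction $\text{KL}(\nu\|\mu)\geq d(p\|q)$ via the log-sum inequality on the partition $\{A,A^{c}\}$ is exactly the right bridge between the set-supremum definition of TV used in the paper and the integral definition of KL, the hypothesis $\nu\ll\mu$ indeed makes all Radon--Nikodym quantities well defined (and forces $\nu(A)=0$ whenever $\mu(A)=0$, so the boundary conventions are harmless), and your one-variable argument for $d(p\|q)\geq 2(p-q)^{2}$ — $\phi(q)=\phi'(q)=0$ together with $\phi''(p)=\frac{1}{p(1-p)}-4\geq 0$ — is complete. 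One small remark on your closing aside: the alternative analytic route you sketch is not actually lossy; with the weight $\frac{2f+4}{3}$ one has $\int\frac{2f+4}{3}\,d\mu=2$ and the pointwise bound $f\log f-f+1\geq\frac{3(f-1)^{2}}{2f+4}$ integrates to give $\bigl(2\,\text{TV}(\mu,\nu)\bigr)^{2}\leq 2\,\text{KL}(\nu\|\mu)$, which is precisely the sharp constant $1/2$, so that route would have worked equally well.
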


In order to control the 1-Wasserstein distance in \Cref{thm:wasserstein_map,theorem_1wasserstein} and the integral probability metric appearing in \Cref{w2_map,theorem_2wasserstein}, we first control $I_1^{MAP}$ or $I_1^{MLE}$ by controlling the 2-Wasserstein distance inside the appropriate neighborhood of the MAP or the MLE. The 2-Wasserstein distance  for two measures $\mu$ and $\nu$ on the same measurable space is defined in the following way:
\begin{align}\label{def:w2}
	W_2(\nu,\mu)=\inf_{\Gamma}\sqrt{\mathbbm{E}_{\Gamma}\left[\|X-Y\|^2\right]}
\end{align}
where the infimum is over all distributions $\Gamma$ of $(X,Y)$ with the correct marginals $X\sim\nu$, $Y\sim\mu$. It is an easy consequence of Jensen's inequality that for any probability measures $\mu$, $\nu$,
\begin{align}\label{wasserstein_ineq}
	W_1(\mu,\nu)\leq W_2(\mu,\nu).
\end{align}
We also have the following Talagrand inequality:
\begin{proposition}\label{prop_talagrand}
Let $\mu$ be a probability measure on a closed ball $\{u:\|u\|\leq \eta\}$ for some $\eta>0$ such that $\mu$ is absolutely continuous with respect to the Lebesgue measure. Suppose that $\mu$ satisfies LSI($\alpha$) on $\{u:\|u\|\leq \eta\}$. Then, for any probability measure $\nu\ll\mu$ on $\{u:\|u\|\leq \eta\}$, we have that
\begin{align*}
	W_2(\mu,\nu)^2\leq \frac{2}{\alpha} \text{KL}(\nu\|\mu).
\end{align*}
\end{proposition}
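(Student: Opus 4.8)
The plan is to establish the stated inequality as an instance of the Otto--Villani theorem: a measure satisfying a log-Sobolev inequality automatically satisfies the quadratic transportation-entropy (Talagrand $T_2$) inequality with the matching constant. I would prove the implication $\mathrm{LSI}(\alpha)\Rightarrow T_2(2/\alpha)$ through the Hamilton--Jacobi semigroup method of Bobkov--Gentil--Ledoux, which rests on three ingredients: Kantorovich duality for the squared $2$-Wasserstein cost, the Donsker--Varadhan variational formula for the Kullback--Leibler divergence, and the hypercontractivity of the infimum-convolution (Hopf--Lax) semigroup driven by the log-Sobolev inequality. The restriction to the convex ball $\{u:\|u\|\le\eta\}$ is exactly what lets the Hopf--Lax machinery go through on a bounded domain, and the assumption $\nu\ll\mu$ with both supported in the ball guarantees that $W_2(\mu,\nu)$ and $\mathrm{KL}(\nu\|\mu)$ are simultaneously finite and comparable.

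First I would invoke Kantorovich duality to write $\tfrac12 W_2(\mu,\nu)^2=\sup_f\bigl\{\int Q_1 f\,d\nu-\int f\,d\mu\bigr\}$, where the supremum runs over bounded Lipschitz $f$ and $Q_t f(x):=\inf_{\|y\|\le\eta}\bigl\{f(y)+\tfrac{1}{2t}\|x-y\|^2\bigr\}$ is the Hopf--Lax semigroup restricted to the ball; convexity of the ball ensures that $Q_t f$ is Lipschitz and solves the Hamilton--Jacobi equation $\partial_t Q_t f=-\tfrac12\|\nabla Q_t f\|^2$ almost everywhere. Next, applying the Gibbs variational inequality $\int g\,d\nu\le \mathrm{KL}(\nu\|\mu)+\log\int e^{g}\,d\mu$ with $g=Q_1 f$ reduces the problem to showing that the free-energy term is controlled by $\int f\,d\mu$; concretely, it suffices to prove $\log\int e^{\alpha Q_t f}\,d\mu\le \alpha\int f\,d\mu$ at the terminal time dictated by the correct reparametrization of the flow.

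The crux is this last hypercontractivity bound. I would differentiate $\varphi(t):=\log\int e^{\alpha Q_t f}\,d\mu$ in $t$, substitute the Hamilton--Jacobi equation to obtain $\varphi'(t)=-\tfrac{\alpha}{2}\,\bigl(\int e^{\alpha Q_t f}\,d\mu\bigr)^{-1}\int \|\nabla Q_t f\|^2 e^{\alpha Q_t f}\,d\mu$, and then recognize the right-hand side as an entropy-production term: setting $g=e^{\alpha Q_t f/2}$, so that $\|\nabla g\|^2=\tfrac{\alpha^2}{4}\|\nabla Q_t f\|^2 e^{\alpha Q_t f}$, the log-Sobolev inequality $\mathrm{Ent}_\mu(g^2)\le \tfrac{2}{\alpha}\int\|\nabla g\|^2\,d\mu$ exactly upper bounds the entropy of $e^{\alpha Q_t f}$ by this Fisher-information-type quantity. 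Feeding this into the differential inequality for $\varphi$ and integrating from $0$ to the terminal time yields the desired bound with the transport constant $2/\alpha$; combining with the two preceding steps gives $\tfrac12 W_2(\mu,\nu)^2\le \tfrac{1}{\alpha}\mathrm{KL}(\nu\|\mu)$, which is the claim.

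I expect the hypercontractivity step to be the main obstacle, for two reasons. First, one must rigorously justify differentiating under the integral sign and the almost-everywhere validity of the Hamilton--Jacobi equation for the Hopf--Lax semigroup restricted to the bounded convex domain, rather than on all of $\mathbbm{R}^d$; here the convexity of the ball and the Lipschitz regularity of $Q_t f$ are essential. Second, matching the log-Sobolev constant $\alpha$ to the transport constant $2/\alpha$ requires keeping careful track of the time scaling in the Hamilton--Jacobi flow, which is the step where an off-by-a-constant-factor error is easiest to make. Since the whole statement is precisely the packaged form of the Otto--Villani theorem, an alternative and shorter route is simply to cite the transportation-information result of \cite{gozlan}, verifying only that our truncated, absolutely continuous $\mu$ meets its hypotheses.
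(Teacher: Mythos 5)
Your primary route has a genuine gap at its crux step. With the \emph{fixed} exponent $\alpha$, your function $\varphi(t)=\log\int e^{\alpha Q_t f}\,d\mu$ satisfies
$\varphi'(t)=-\tfrac{\alpha}{2}\bigl(\int e^{\alpha Q_t f}\,d\mu\bigr)^{-1}\int \|\nabla Q_t f\|^2 e^{\alpha Q_t f}\,d\mu\le 0$,
and no entropy term ever appears in this derivative: the log-Sobolev inequality is never actually invoked, and integrating from $0$ to $1$ yields only $\log\int e^{\alpha Q_1 f}\,d\mu\le \log\int e^{\alpha f}\,d\mu$. That is strictly weaker than what your Donsker--Varadhan step requires, namely $\log\int e^{\alpha Q_1 f}\,d\mu\le \alpha\int f\,d\mu$ (by Jensen, $\log\int e^{\alpha f}\,d\mu\ge\alpha\int f\,d\mu$, so the bound you obtain cannot be converted into the one you need). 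The standard Bobkov--Gentil--Ledoux device that closes the argument is a \emph{time-dependent} exponent: one studies $\psi(t)=\frac{1}{\alpha t}\log\int e^{\alpha t\, Q_t f}\,d\mu$ for $t\in(0,1]$, which satisfies $\lim_{t\to 0^{+}}\psi(t)=\int f\,d\mu$; differentiating $\psi$ produces \emph{both} an entropy term (coming from the $t$ in the exponent and the prefactor $1/(\alpha t)$) and the Fisher-type term (coming from the Hamilton--Jacobi equation), and it is exactly LSI($\alpha$) that makes $\psi$ non-increasing, giving $\frac{1}{\alpha}\log\int e^{\alpha Q_1 f}\,d\mu=\psi(1)\le\int f\,d\mu$. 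With that repair, plus the justification you rightly flag of the Hopf--Lax semigroup and its a.e.\ Hamilton--Jacobi equation on the convex ball, your route becomes a valid self-contained alternative.

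Your fallback suggestion --- simply citing \cite{gozlan} and checking that the truncated measure meets its hypotheses --- is in fact exactly the paper's proof, but be aware that the hypothesis-checking is not a formality: it is the entire content of the paper's argument. Concretely, the paper adapts the proof of \cite[Corollary 4.2]{gozlan} to invoke \cite[Theorem 4.1]{gozlan}: it verifies that the empirical-measure functional $F_n(x)=W_2\bigl(n^{-1}\sum_{i=1}^n\delta_{x_i},\mu\bigr)$ remains $\frac{1}{\sqrt{n}}$-Lipschitz when restricted to $\{u:\|u\|\le\eta\}^n$, that it is still Lebesgue-a.e.\ differentiable there by Rademacher's theorem, and hence that the gradient condition $\sum_{i=1}^n\|\nabla_i F_n\|^2(x)\le \frac{1}{n}$ holds $\mu^n$-almost everywhere, which is what \cite[Theorem 4.1]{gozlan} requires. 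So the two viable completions of your proposal are genuinely different proofs: the Gozlan route (the paper's) goes through dimension-free concentration of the empirical $2$-Wasserstein distance, while the repaired Hamilton--Jacobi route is longer but avoids any appeal to that characterization.
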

\begin{proof}
	This result follows from \citet[Theorem 4.1]{gozlan} by an argument analogous to the one that let the authors prove the result in \citet[Corollary 4.2]{gozlan}. Indeed, for a natural number $n\geq 1$ and  $x=(x_1,\dots,x_n)\in\left(\mathbbm{R}^d\right)^n$, consider $L_n^x=n^{-1}\sum_{i=1}^n\delta_{x_i}$, where $\delta_{x_i}$'s are Dirac deltas. In the proof of \citet[Corollary 4.2]{gozlan} it is shown that the function $x\mapsto W_2(L_n^x,\mu)$ is $\frac{1}{\sqrt{n}}$-Lipschitz for the Euclidean distance. When restricted to the set $\{u:\|u\|\leq \eta\}^n\subset\left(\mathbbm{R}^d\right)^n$, this function is still $\frac{1}{\sqrt{n}}$-Lipschitz. As in the proof of \citet[Corollary 4.2]{gozlan}, we note that, by Rademacher's theorem the function $F_n(x)=W_2(L_n^x,\mu)$ is differentiable Lebesgue-almost everywhere on $\left(\mathbbm{R}^{d}\right)^n$. When restricted to $\{u:\|u\|\leq \eta\}^n\subset\left(\mathbbm{R}^d\right)^n$ this function stays differentiable Lebesgue-almost everywhere. Therefore, the condition 
	\begin{align*}
		\sum_{i=1}^n\left\|\nabla_iF_n\right\|^2(x)\leq\frac{1}{n}\quad\text{for }\mu^n\text{-almost every }x\in\{u:\|u\|\leq \eta\}^n\subset\left(\mathbbm{R}^d\right)^n
	\end{align*}
is fulfilled, as required by \citet[Theorem 4.1]{gozlan}.
\end{proof}
Proposition \ref{prop_talagrand} together with \cref{wasserstein_ineq} and the log-Sobolev inequality will let us control $I_1^{MAP}$ and $I_1^{MLE}$ in the proofs of \Cref{thm:wasserstein_map,w2_map,theorem_1wasserstein,theorem_2wasserstein}.
	\subsection{Controlling term $I_2^{MLE}$ }

Note that
	\begin{align*}
		I_2^{MLE}\leq &\left|\int_{\|u\|>\deltamle\sqrt{n}}g(u)\left(n^{-d/2} \posterior(n^{-1/2} u+\mle)-\frac{\sqrt{\det\jhat}e^{-u^T\fisherinformation(\mle)u/2}}{(2\pi)^{d/2}}\right)du\right|\\
		&+ \frac{n^{-d/2}}{C_n^{MLE}}\int_{\|t\|\leq \deltamle\sqrt{n}}|g(t)|\posterior(n^{-1/2}t+\hat{\theta}_n)dt\\
		&\hspace{1cm}\cdot\left|\int_{\|u\|>\deltamle\sqrt{n}}\left(n^{-d/2} \posterior(n^{-1/2} u+\mle)-\frac{\sqrt{\det\jhat}e^{-u^T\fisherinformation(\hat{\theta}_n)u/2}}{(2\pi)^{d/2}}\right)du\right|\\
		=:&I_{2,1}^{MLE}+I_{2,2}^{MLE}.
	\end{align*}
	Now, note that $L_n'(\mle)=0$. Therefore, for $\|t\|\leq\deltamle\sqrt{n}$, \Cref{assump1} implies that
	\begin{align}\label{taylor1}
	\left|L_n(n^{-1/2}t+\hat{\theta}_n)-L_n(\hat{\theta}_n)+\frac{1}{2}t^T\jhat t\right|\leq \frac{1}{6}n^{-1/2}M_2\|t\|^3\leq \frac{\deltamle M_2}{6}\|t\|^2.
	\end{align}
	Therefore, under \Cref{assump1,assump_prior1,assump_kappa}, and using the notation of \Cref{sec:additional_notation},
	\begin{align}
		&\int_{\|u\|>\deltamle\sqrt{n}}|g(u)|n^{-d/2}\posterior(n^{-1/2}u+\mle)du\notag\\
		=&\frac{\int_{\|u\|>\deltamle\sqrt{n}}|g(u)|\prior(n^{-1/2}u+\mle)e^{\loglikelihood(n^{-1/2}u+\mle)}du}{\int_{\mathbbm{R}^d}\prior(n^{-1/2}t+\mle)e^{L_n(n^{-1/2}t+\mle)}dt}\notag\\
		\leq &\frac{\int_{\|u\|>\deltamle\sqrt{n}}|g(u)|\pi(n^{-1/2}u+\mle)e^{L_n(n^{-1/2}u+\mle)}du}{\int_{\|t\|\leq\deltamle\sqrt{n}}\prior(n^{-1/2}t+\mle)e^{\loglikelihood(n^{-1/2}t+\mle)}dt}\notag\\
		= &\frac{\int_{\|u\|>\deltamle\sqrt{n}}|g(u)|\prior(n^{-1/2}u+\mle)e^{\loglikelihood(n^{-1/2}u+\mle)-\loglikelihood(\mle)}du}{\int_{\|t\|\leq\deltamle\sqrt{n}}\pi(n^{-1/2}t+\mle)e^{\loglikelihood(n^{-1/2}t+\mle)-\loglikelihood(\mle)}dt}\notag\\
		\stackrel{\cref{taylor1}}\leq & \frac{e^{-n\kappamle}\int_{\|u\|>\deltamle\sqrt{n}}|g(u)|\prior(n^{-1/2}u+\mle)du}{\int_{\|t\|\leq\deltamle\sqrt{n}}\prior(n^{-1/2}t+\mle)e^{-t^T(\fisherinformation(\mle)+(\deltamle M_2/3)I_{d\times d})t/2}dt}\notag\\
		\leq &\frac{ n^{d/2}e^{-n\kappamle}\hat{M}_1\left|\text{det}\left(\jhatplus\right)\right|^{1/2}\int_{\|u\|>\deltamle}|g(u\sqrt{n})|\pi(u+\mle)du}{\left(2\pi \right)^{d/2}\hspace{-1mm}\left( \hspace{-1mm}1-\decayhatplus\right)},\label{i211}
	\end{align}
	if $n>\frac{\Tr\left[\jhatplus^{-1}\right]}{\deltamle^2}$, where the last inequality follows from Lemma \ref{lemma1}.
	Therefore, if $n>\frac{\Tr\left[\jhatplus^{-1}\right]}{\deltamle^2}$ and if \Cref{assump1,assump_prior1,assump_kappa} are satisfied,
	\begin{align}
		I_{2,1}^{MLE}\leq & \left|\int_{\|u\|>\deltamle\sqrt{n}}g(u)\frac{\sqrt{\left|\text{det}\,\fisherinformation(\mle)\right|}e^{-u^T\fisherinformation(\mle)u/2}}{(2\pi)^{d/2}}du\right|\notag\\
		&+\frac{ n^{d/2}e^{-n\kappamle}\hat{M}_1\left|\text{det}\left(\jbarplus\right)\right|^{1/2}\int_{\|u\|>\deltamle}|g(u\sqrt{n})|\pi(u+\mle)du}{\left(2\pi \right)^{d/2}\left( 1-\decayhatplus\right)}\label{i21}.
	\end{align}
	Now, under \Cref{assump1,assump_prior1,assump_size_of_delta,assump_fisher,assump_kappa,assump_prior2},
	\begin{align}
		&\frac{n^{-d/2}}{C_n^{MLE}}\int_{\|t\|\leq \deltamle\sqrt{n}}|g(t)|\posterior(n^{-1/2}t+\mle)dt\notag\\
		\leq&\frac{\hat{M}_1\widetilde{M}_1\int_{\|t\|\leq\deltamle\sqrt{n}}|g(t)|e^{\loglikelihood(n^{-1/2}t+\loglikelihood)-\loglikelihood(\mle)}dt}{\int_{\|u\|\leq\deltamle\sqrt{n}}e^{\loglikelihood(n^{-1/2}u+\hat{\theta}_n)-\loglikelihood(\mle)}du}\notag\\
		\leq &\frac{\hat{M}_1\widetilde{M}_1\int_{\|t\|\leq\deltamle\sqrt{n}}|g(t)|e^{-\frac{1}{2}t^T\left(\fisherinformation(\mle)-\frac{M_2\deltamle}{3}I_{d\times d}\right)t}dt}{\int_{\|u\|\leq\deltamle\sqrt{n}}e^{-u^T(\fisherinformation(\mle)+(\deltamle M_2/3)I_{d\times d})u/2}du}\notag\\
		\leq &\frac{ \hat{M}_1\widetilde{M}_1\left|\text{det}\left(\jhatplus\right)\right|^{1/2}\int_{\|t\|\leq\deltamle\sqrt{n}}|g(t)|e^{-\frac{1}{2}t^T\jhatminus t}dt}{\left(2\pi \right)^{d/2}\hspace{-1mm}\left( 1-\decayhatplus\right)},\label{i22}
	\end{align}
	where the last inequality follows from Lemma \ref{lemma1}. A bound on $I_{2,2}$ can be obtained by combining \cref{i22} with \cref{i21} applied to $g=1$. Indeed, we thus obtain:
	\begin{align}
I_{2,2}^{MLE}\leq &\frac{ \hat{M}_1\widetilde{M}_1\left|\text{det}\left(\jhatplus\right)\right|^{1/2}\int_{\|u\|\leq\deltamle\sqrt{n}}|g(u)|e^{-\frac{1}{2}u^T\jhatminus u}du}{\left(2\pi \right)^{d/2}\left( 1-\decayhatplus\right)}\notag\\
&\cdot\left\{\decayhat+\frac{ n^{d/2}e^{-n\kappamle}\hat{M}_1\left|\text{det}\left(\jhatplus\right)\right|^{1/2}}{\left(2\pi \right)^{d/2}\hspace{-1mm}\left( 1-\decayhatplus\right)}\right\}
\label{i221},
	\end{align}
where we used Lemma \ref{lemma1}.   A bound on $I_2^{MLE}$ is obtained by adding together the bounds on $I_{2,1}^{MLE}$ (\cref{i21}) and $I_{2,2}^{MLE}$ (\cref{i221}).
\begin{remark}\label{remark_proofs_a}
	Note that, for $g$, such that $|g|\leq U$, for some $U>0$, we have that
		\begin{align*}
		&\frac{n^{-d/2}}{C_n^{MLE}}\int_{\|t\|\leq \deltamle\sqrt{n}}|g(t)|\posterior(n^{-1/2}t+\mle)dt\leq U.
		\end{align*}
	Therefore, for $|g|\leq U$, the same argument as above yields a simpler bound:
	\begin{align*}
		I_{2,2}^{MLE}\leq& U\left\{\decayhat+\frac{ n^{d/2}e^{-n\kappamle}\hat{M}_1\left|\text{det}\left(\jhatplus\right)\right|^{1/2}}{\left(2\pi \right)^{d/2}\hspace{-1mm}\left( 1-\decayhatplus\right)}\right\}.
	\end{align*}

\end{remark}

	\subsection{Controlling term $I_{2}^{MAP}$}\label{sec:control_i2map}
	Note that
	\begin{align*}
		I_2^{MAP}\leq &\left|\int_{\|u\|>\deltabar\sqrt{n}}g(u)\left(n^{-d/2} \posterior(n^{-1/2} u+\map)-\frac{\sqrt{\left|\text{det}\,\postfisherinformation(\map)\right|}e^{-u^T\postfisherinformation(\map)u/2}}{(2\pi)^{d/2}}\right)du\right|\\
		&+ \frac{n^{-d/2}}{C_n^{MAP}}\int_{\|t\|\leq \deltabar\sqrt{n}}|g(t)|\posterior(n^{-1/2}t+\map)dt\\
		&\hspace{1cm}\cdot\left|\int_{\|u\|>\deltabar\sqrt{n}}\left(n^{-d/2} \posterior(n^{-1/2} u+\map)-\frac{\sqrt{\left|\text{det}\,\postfisherinformation(\map)\right|}e^{-u^T\postfisherinformation(\map)u/2}}{(2\pi)^{d/2}}\right)du\right|\\
		=:&I_{2,1}^{MAP}+I_{2,2}^{MAP}.
	\end{align*}

%
%	For $\|t\|<\deltabar\sqrt{n}$,
%	$$\left|\bar{L}_n(n^{-1/2}t+\bar{\theta}_n)-\bar{L}_n(\bar{\theta}_n)+\frac{1}{2}t^TI(\bar{\theta}_n)t\right|\leq \frac{1}{6}n^{-1/2}\overline{M}_2\|t\|^3\leq \frac{\deltabar \overline{M}_2}{6}\|t\|^2.$$
	Note that, using \Cref{assump1,assump_prior1,assump_kappa1}, a calculation similar to \cref{i211}, yields

	\begin{align*}
		&\int_{\|u\|>\deltabar\sqrt{n}}|g(u)|n^{-d/2}\posterior(n^{-1/2}u+\bar{\theta}_n)du\\
		=&\int_{\|v-\bar{\theta}_n\|>\deltabar}\left|g\left(\sqrt{n} (v-\bar{\theta}_n)\right)\right|\,\posterior(v)dv\\
		\leq&\frac{\int_{\|v-\mle\|>\deltabar-\|\mle-\map\|}\left|g\left(\sqrt{n} (v-\map)\right)\right|\,\prior(v)e^{\loglikelihood(v)}dv}{\int_{\mathbbm{R}^d}\prior(t)e^{\loglikelihood(t)}dt}\\
	\leq&\frac{\int_{\|v-\hat{\theta}_n\|>\deltabar-\|\hat{\theta}_n-\bar{\theta}_n\|}\left|g\left(\sqrt{n} (v-\map)\right)\right|\,\prior(v)e^{\loglikelihood(v)-\loglikelihood(\hat{\theta}_n)}dv}{\int_{\|t-\mle\|\leq\deltamle}\prior(t)e^{\loglikelihood(t)-L_n(\mle)}dt}\\
		\leq & \frac{n^{d/2}e^{-n\kappabar}\int_{\|v-\hat{\theta}_n\|>\deltabar-\|\hat{\theta}_n-\bar{\theta}_n\|}\left|g\left(\sqrt{n} (v-\bar{\theta}_n)\right)\right|\pi(v)dv}{\int_{\|t\|\leq\deltamle\sqrt{n}}\prior(n^{-1/2}t+\hat{\theta}_n)e^{-t^T(\fisherinformation(\mle)+(\deltamle M_2/3)I_{d\times d})t/2}dt}\\
		\leq &\frac{ n^{d/2}e^{-n\kappabar}\hat{M}_1\left|\text{det}\left(\jhatplus\right)\right|^{1/2}\int_{\|v-\hat{\theta}_n\|>\deltabar-\|\hat{\theta}_n-\bar{\theta}_n\|}\left|g\left(\sqrt{n} (v-\bar{\theta}_n)\right)\right|\pi(v)du}{\left(2\pi \right)^{d/2}\hspace{-1mm}\left( 1-\decayhatplus\right)},
	\end{align*}
	if $n>\frac{\Tr\left[\jhatplus^{-1}\right]}{\deltamle^2}$, where the last inequality follows from Lemma \ref{lemma1}.
	Therefore, under \Cref{assump1,assump_prior1,assump_kappa1} and if $n>\frac{\Tr\left[\jhatplus^{-1}\right]}{\deltamle^2}$,
	\begin{align}
		I_{2,1}^{MAP}\leq & \left|\int_{\|u\|>\deltabar\sqrt{n}}g(u)\frac{\sqrt{\left|\text{det}\,\postfisherinformation(\map)\right|}e^{-u^T\postfisherinformation(\map)u/2}}{(2\pi)^{d/2}}du\right|\notag\\
		&+\frac{ n^{d/2}e^{-n\kappabar}\hat{M}_1\left|\text{det}\left(\jhatplus\right)\right|^{1/2}\int_{\|v-\hat{\theta}_n\|>\deltabar-\|\hat{\theta}_n-\bar{\theta}_n\|}\left|g\left(\sqrt{n} (v-\bar{\theta}_n)\right)\right|\pi(v)du}{\left(2\pi \right)^{d/2}\hspace{-1mm}\left( 1-\decayhatplus\right)}.\label{i21_map}
	\end{align}
	Now, under \Cref{assump1,assump_prior1,assump2,assump_size_of_delta_bar,assump7,assump_kappa1},
	\begin{align}
		&\frac{n^{-d/2}}{C_n^{MAP}}\int_{\|t\|\leq \deltabar\sqrt{n}}|g(t)|\posterior(n^{-1/2}t+\map)dt\notag\\
		\leq&\frac{\int_{\|t\|\leq\deltabar\sqrt{n}}|g(t)|e^{\l\logposterior(n^{-1/2}t+\map)-\logposterior(\map)}dt}{\int_{\|u\|\leq\deltabar\sqrt{n}}e^{\logposterior(n^{-1/2}u+\map)-\logposterior(\map)}du}\notag\\
		\leq &\frac{\int_{\|t\|\leq\deltabar\sqrt{n}}|g(t)|e^{-\frac{1}{2}t^T\left(\postfisherinformation(\map)-\frac{\overline{M}_2\deltabar}{3}I_{d\times d}\right)t}dt}{\int_{\|u\|\leq\deltabar\sqrt{n}}e^{-u^T(\postfisherinformation(\map)+(\deltabar \overline{M}_2/3)I_{d\times d})u/2}du}\notag\\
		\leq &\frac{ \left|\text{det}\left(\jbarplus\right)\right|^{1/2}\int_{\|u\|\leq\deltabar\sqrt{n}}|g(u)|e^{-\frac{1}{2}u^T\jbarminus u}du}{\left(2\pi \right)^{d/2}\hspace{-1mm}\left( 1-\decaybarplus\right)},\label{i222}
	\end{align}
	where the last inequality follows from Lemma \ref{lemma1}. A bound on $I_{2,2}^{MAP}$ can be obtained by combining \cref{i222} with \cref{i21_map} applied to $g=1$. Indeed, we obtain:
	\begin{align}
		I_{2,2}^{MAP}\leq &\frac{ \left|\text{det}\left(\jbarplus\right)\right|^{1/2}\int_{\|u\|\leq\deltabar\sqrt{n}}|g(u)|e^{-\frac{1}{2}u^T\jbarminus u}du}{\left(2\pi \right)^{d/2}\hspace{-1mm}\left( 1-\decaybarplus\right)}\notag\\
		&\cdot \left\{\decaybar +\frac{ n^{d/2}e^{-n\kappabar}\hat{M}_1\left|\text{det}\left(\jhatplus\right)\right|^{1/2}}{\left(2\pi \right)^{d/2}\hspace{-1mm}\left( 1-\decayhatplus\right)}\right\}\label{i22_map},
	\end{align}
where we applied Lemma \ref{lemma1}.
	\begin{remark}\label{remark_i2_map}
		As in Remark \ref{remark_proofs_a}, our bound gets simpler if $|g|\leq U$, for some $U>0$. In that case, instead of \cref{i22_map}, we can write:
		\begin{align*}
				I_{2,2}^{MAP}\leq &U \left\{\decaybar+\frac{ n^{d/2}e^{-n\kappabar}\hat{M}_1\left|\text{det}\left(\jhatplus\right)\right|^{1/2}}{\left(2\pi \right)^{d/2}\hspace{-1mm}\left( 1-\decayhatplus\right)}\right\}.
		\end{align*}
	\end{remark}
	\section{Proofs of Theorems \ref{main_map}, \ref{thm:wasserstein_map} and \ref{w2_map}}\label{appendix_b}
	Throughout this section we adopt the notation of \Cref{introductory_arguments}. Additionally, for any probability measure $\mu$, we let $[\mu]_{B_0(\deltabar\sqrt{n})}$ denote its restriction (truncation) to the ball of radius $\deltabar\sqrt{n}$ around $0$.
	
	In all the proofs below, we wish to control the quantity $D_g^{MAP}$ of \cref{dgmap} for all functions $g$ that satisfy certain prescribed criteria. In the proof of \Cref{main_map}, we look at functions $g$ that are indicators of measurable sets, in the proof of \Cref{thm:wasserstein_map} we look at 1-Lipschitz functions $g$ and in the proof of \Cref{w2_map} at those that are of the form $g(x)=\left<v,x\right>^2$ for some $v\in\mathbbm{R}^d$ with $\|v\|=1$. In order to prove \Cref{main_map,thm:wasserstein_map,w2_map}, we will bound terms $I_2^{MAP}$ and $I_1^{MAP}$ of \cref{i1i2map} separately.
\subsection{Proof of \Cref{main_map}}

\subsubsection{Controlling term $I_2^{MAP}$}We wish to obtain a uniform bound on $I_2^{MAP}$ for all functions $g$ that are indicators of measurable sets.
Every indicator function is upper-bounded by one, so we can use Remark \ref{remark_i2_map} to obtain:
\begin{align}
	&I_{2,2}^{MAP}\leq \decaybar+\frac{ n^{d/2}e^{-n\kappabar}\hat{M}_1\left|\text{det}\left(\jhatplus\right)\right|^{1/2}}{\left(2\pi \right)^{d/2}\hspace{-1mm}\left( 1-\decayhatplus\right)}.\label{bi22}
\end{align}
Similarly, since $|g|\leq 1$, we can use \cref{i21_map}  and Lemma \ref{lemma1} to obtain:
\begin{align}
	&I_{2,1}^{MAP}\leq \decaybar+\frac{ n^{d/2}e^{-n\kappabar}\hat{M}_1\left|\text{det}\left(\jhatplus\right)\right|^{1/2}}{\left(2\pi \right)^{d/2}\hspace{-1mm}\left( 1-\decayhatplus\right)}.\label{bi21}
\end{align}
From \cref{bi22,bi21}, it follows that
\begin{align}
	I_2^{MAP}\leq& 2\,\decaybar+\frac{2 n^{d/2}e^{-n\kappabar}\hat{M}_1\left|\text{det}\left(\jhatplus\right)\right|^{1/2}}{\left(2\pi \right)^{d/2}\hspace{-1mm}\left( 1-\decayhatplus\right)}.\label{i2_map_tv}
\end{align}
%Note that
%\begin{align*}
%\frac{n^{-1/2}}{C_n}\left|\int_{\|t\|<\delta\sqrt{n}}g(t)p(n^{-1/2}t+\hat{\theta}_n)\right|\leq&\frac{\int_{\|t\|<\delta\sqrt{n}}|g(t)|p(n^{-1/2} t+\hat{\theta}_n)dt}{\int_{\|u\|<\delta\sqrt{n}}p(n^{-1/2} u+\hat{\theta}_n)du}\\
%\leq& \widetilde{M}_1\hat{M}_1\frac{\int_{\|t\|<\delta\sqrt{n}}|g(t)|e^{-(1/(2\sigma_n^2)-\delta M_2/6)t^2}dt}{\int_{\|u\|<\delta\sqrt{n}}e^{-(\|I(\hat{\theta}_n)\|_{H.S.}/2+\delta M_2/6)u^2}du}\\
%\leq &\widetilde{M}_1\hat{M}_1\frac{\sqrt{3\|I(\hat{\theta}_n)\|_{H.S.}+\delta M_2}\int_{\|t\|<\delta\sqrt{n}}|g(t)|e^{-\left(t^TI(\hat{\theta}_n)t-(\delta M_2/6)\|t\|^2\right)}dt}{\sqrt{12\pi d}\left(1-2e^{-\delta^2n(\|I(\hat{\theta}_n)\|_{H.S.}/2+\delta M_2/6)}\right)}.
%\end{align*}
\subsubsection{Controlling term $I_1^{MAP}$ using the log-Sobolev inequality}\label{subs:controlling_i1}
 Let $\text{KL}\left(\cdot\|\cdot\right)$ denote the Kullback-Leibler divergence (\cref{def:kl_divergence}) and $\text{Fisher}\left(\cdot\|\cdot\right)$ denote the Fisher divergence (\cref{def:fisher_divergence}). Let $F_n^{MAP}$ be given by \cref{f_n_map}.
 
 Note that, by \Cref{assump2}, for $t$ such that $\|t\|<\sqrt{n}\deltabar$, we have
\begin{align*}
	-n^{-1}\logposterior''(\map+n^{-1/2}t)\succeq\postfisherinformation(\map)-\deltabar \overline{M}_2I_{d\times d}\succeq \left(\minevaluemap- \deltabar\overline{M}_2\right)I_{d\times d}.
\end{align*}
This means that, inside the convex set $\{t\in\mathbbm{R}^d\,:\,\|t\|<\sqrt{n}\deltabar\}$, the density of $\sqrt{n}(\postparam-\map)$ is $ \left(\minevaluemap- \deltabar\overline{M}_2\right)$-strongly log-concave (see e.g. \citealt{saumard}). 
 Using  Proposition \ref{prop_lsi} (the \textbf{Bakry-\'Emery criterion}), we have that $\left[\mathcal{L}\left(\sqrt{n}\left(\postparam-\map\right)\right)\right]_{B_{0}(\deltabar\sqrt{n})}$ satisfies the \textbf{log-Sobolev inequality} LSI$ \left(\minevaluemap- \deltabar\overline{M}_2\right)$ (see Definition \ref{def:log-sobolev}).  By combining the log-Sobolev inequality with Pinsker's inequality (Proposition \ref{pinsker}) we obtain that, for all functions $g$, which are indicators of measurable sets,
\begin{align}
	I_1^{MAP}\leq&\text{TV}\left(\left[\mathcal{L}\left(\sqrt{n}\left(\postparam-\map\right)\right)\right]_{B_{0}(\deltabar\sqrt{n})},\left[\mathcal{N}(0,\postfisherinformation(\map)^{-1})\right]_{B_{0}(\deltabar\sqrt{n})}\right)\notag\\
	\underset{\text{inequality}}{\stackrel{\text{Pinsker's}}\leq}&\sqrt{\frac{1}{2}\text{KL}\left(\left.\left[\mathcal{N}(0,\postfisherinformation(\map)^{-1})\right]_{B_{0}(\deltabar\sqrt{n})}\,\right\|\,\left[\mathcal{L}\left(\sqrt{n}\left(\postparam-\map\right)\right)\right]_{B_{0}(\deltabar\sqrt{n})}\right)}\notag\\
	\underset{\text{inequality}}{\stackrel{\text{log-Sobolev}}\leq} &\frac{\sqrt{\text{Fisher}\left(\left.\left[\mathcal{N}(0,\postfisherinformation(\map)^{-1})\right]_{B_{0}(\deltabar\sqrt{n})}\,\right\|\,\left[\mathcal{L}\left(\sqrt{n}\left(\postparam-\map\right)\right)\right]_{B_{0}(\deltabar\sqrt{n})}\right)}}{2\sqrt{\minevaluemap-\deltabar\, \overline{M}_2}}\notag\\
	= &\frac{1}{2\sqrt{\minevaluemap-\deltabar\, \overline{M}_2}}\notag\\
	&\cdot\sqrt{\int_{\|u\|\leq\deltabar\sqrt{n}}\frac{\sqrt{\det \jbar}e^{-u^T\postfisherinformation(\map)u/2}}{F_n^{MAP}(2\pi)^{d/2}}\left\|\postfisherinformation(\map)u+\frac{\logposterior'(n^{-1/2}u+\map)}{\sqrt{n}}\right\|^2du}\notag\\
	\underset{\text{theorem}}{\stackrel{\text{Taylor's}}\leq} &\frac{\overline{M}_2}{4\sqrt{n\left(\minevaluemap-\deltabar\, \overline{M}_2\right)}}\sqrt{\int_{\|u\|\leq\deltabar\sqrt{n}}\frac{\det \jbar^{1/2}e^{-u^T\postfisherinformation(\map)u/2}}{F_n^{MAP}(2\pi)^{d/2}}\left\|u\right\|^4du}\notag\\
	\leq &\frac{\sqrt{3}\,\Tr\left[\postfisherinformation(\map)^{-1}\right] \overline{M}_2}{4\sqrt{n\left(\minevaluemap-\deltabar\overline{M}_2\right)\left( 1- \decaybar\right)}},\label{i1_map_tv}
\end{align}
as long as $n>\frac{\Tr\left[\postfisherinformation(\map)^{-1}\right]}{\left.\deltabar\right.^2}$, where the last inequality follows from Lemma \ref{lemma1} and the fact that, for $U\sim\mathcal{N}(0,\postfisherinformation(\map)^{-1})$, $\mathbbm{E}\|U\|^4\leq 3 \Tr\left[\postfisherinformation(\map)^{-1}\right]^2$. To see this last fact, use the spectral theorem and write $\postfisherinformation(\map)^{-1}=P^T\Lambda P$, for an orthogonal matrix $P$ and diagonal matrix $\Lambda$. Then, it follows that, for $N\sim\mathcal{N}(0,I_{d\times d}),$ $U^TU = N^TP^T\Lambda PN = (PN)^T\Lambda (PN)$. Since $(PN)\sim\mathcal{N}(0,I_{d\times d})$, note that $\mathbbm{E}(U^TU)^2=\mathbbm{E}\left[\left(\sum_{i=1}^dN_{\lambda_i}^2\right)^2\right]\leq 3 \left(\sum_{i=1}^d\lambda_i\right)^2$, where $\lambda_i$'s are the diagonal elements of $\Lambda$ and $N_{\lambda_i}$'s are independent such that $N_{\lambda_i}\sim\mathcal{N}(0,\lambda_i)$ for $i=1,\dots,d$.

\subsubsection{Conclusion}The result now follows from adding together bounds in \cref{i2_map_tv,i1_map_tv}.

\subsection{Proof of \Cref{thm:wasserstein_map}}
\subsubsection{Controlling term $I_2^{MAP}$} Now we wish to control $I_2^{MAP}$ uniformly over all functions $g$ which are $1$-Lipschitz.
Let us fix a function $g$ that is $1$-Lipschitz and WLOG set $g(0)=0$. In that case $|g(u)|\leq \|u\|$ and, using the notation of \Cref{introductory_arguments} and \cref{i21_map},
\begin{align*}
	I_{2,1}^{MAP}\leq & \int_{\|u\|>\deltabar\sqrt{n}}\|u\|\frac{\sqrt{\left|\text{det}\,\postfisherinformation(\map)\right|}e^{-u^T\postfisherinformation(\map)u/2}}{(2\pi)^{d/2}}du\notag\\
	&+\frac{ n^{d/2+1/2}e^{-n\kappabar}\hat{M}_1\left|\text{det}\left(\jhatplus\right)\right|^{1/2}\int_{\|v-\hat{\theta}_n\|>\deltabar-\|\hat{\theta}_n-\bar{\theta}_n\|}\| v-\bar{\theta}_n\|\pi(v)du}{\left(2\pi \right)^{d/2}\hspace{-1mm}\left( 1-\decayhatplus\right)}.
\end{align*}
Now, for $U\sim\mathcal{N}(0,\postfisherinformation(\map)^{-1})$, and assuming that  $n>\frac{\Tr\left(\postfisherinformation(\map)^{-1}\right)}{\left.\deltabar\right.^2}$,
\begin{align}
	&\int_{\|u\|>\deltabar\sqrt{n}}\|u\|\frac{\sqrt{\left|\text{det}\,\postfisherinformation(\map)\right|}e^{-u^T\postfisherinformation(\map)u/2}}{(2\pi)^{d/2}}du\notag\\
	=&\int_0^{\infty}\mathbbm{P}\left[\|U\|\mathbbm{1}_{[\|U\|>\deltabar\sqrt{n}]}>t\right]dt\notag\\
	\leq &\int_0^{\infty}\mathbbm{P}\left[\|U\|>\max(t,\deltabar\sqrt{n})\right]dt\notag\\
	\stackrel{\Cref{lemma1}}\leq &\int_{\deltabar\sqrt{n}}^{\infty}\exp\left[-\frac{1}{2}\left(t-\sqrt{\Tr\left(\postfisherinformation(\map)^{-1}\right)}\right)^2\minevaluemap\right]dt\notag\\
	&+\deltabar \sqrt{n}\,\exp\left[-\frac{1}{2}\left(\deltabar\sqrt{n}-\sqrt{\Tr\left(\postfisherinformation(\map)^{-1}\right)}\right)^2\minevaluemap\right] \notag\\
	\leq &\left(\deltabar\sqrt{n} + \sqrt{\frac{2\pi}{\minevaluemap}}\right) \exp\left[-\frac{1}{2}\left(\deltabar\sqrt{n}-\sqrt{\Tr\left(\postfisherinformation(\map)^{-1}\right)}\right)^2\minevaluemap\right].\label{normal_integral_w1}
\end{align}
This means that
\begin{align}
	I_{2,1}^{MAP}\leq &\left(\deltabar\sqrt{n} + \sqrt{\frac{2\pi}{\minevaluemap}}\right)\decaybar\notag\\
		&+\frac{ n^{d/2+1/2}e^{-n\kappabar}\hat{M}_1\left|\text{det}\left(\jhatplus\right)\right|^{1/2}\int_{\|v-\hat{\theta}_n\|>\deltabar-\|\mle-\bar{\theta}_n\|}\|v-\map\|\pi(v)du}{\left(2\pi \right)^{d/2}\hspace{-1mm}\left( 1-\decayhatplus\right)}.\label{i21_w1_map}
\end{align}
Now, using \cref{i22_map}, we have
\begin{align}
	I_{2,2}^{MAP}\leq &\frac{ \left|\text{det}\left(\jbarplus\right)\right|^{1/2}\left|\text{det}\left(\jbarminus\right)\right|^{-1/2}\sqrt{\Tr\left[\jbarminus^{-1}\right]}}{ 1-\decaybarplus}\notag\\
	&\cdot \Bigg\{\decaybar+\frac{ n^{d/2}e^{-n\kappabar}\hat{M}_1\left|\text{det}\left(\jhatplus\right)\right|^{1/2}}{\left(2\pi \right)^{d/2}\left( 1-\decayhatplus\right)}\Bigg\}.\label{i22_w1_map}
\end{align}
Adding together bounds in \Cref{i21_w1_map,i22_w1_map} now yields a bound on $I_2^{MAP}$.

\subsubsection{Controlling term $I_1^{MAP}$ using the log-Sobolev inequality and the transportation-entropy inequality}
As in \Cref{subs:controlling_i1}, we shall use the log-Sobolev inequality  LSI$ \left(\minevaluemap- \deltabar\overline{M}_2\right)$ for the measure $\left[\mathcal{L}\left(\sqrt{n}\left(\postparam-\map\right)\right)\right]_{B_{0}(\deltabar\sqrt{n})}$, implied by Proposition \ref{prop_lsi}. A consequence of the log-Sobolev inequality is that we can apply the \textbf{transportation-entropy inequality} for $\left[\mathcal{L}\left(\sqrt{n}\left(\postparam-\map\right)\right)\right]_{B_{0}(\deltabar\sqrt{n})}$, given by Proposition \ref{prop_talagrand}. It lets us upper bound the 1- and 2-Wasserstein distances (see \cref{def:w2}) by a constant times the KL divergence. The KL divergence is in turn bounded by a constant times the Fisher divergence by the log-Sobolev inequality. Let $W_2(\cdot,\cdot)$ denote the $2$-Wasserstein distance and $W_1(\cdot,\cdot)$ denote the 1-Wasserstein distance. We have that for all $1$-Lipschitz functions $g$,
\begin{align}
	I_1^{MAP}\leq &W_1\left(\left[\mathcal{L}\left(\sqrt{n}\left(\postparam-\map\right)\right)\right]_{B_{0}(\deltabar\sqrt{n})},\left[\mathcal{N}(0,\postfisherinformation(\map)^{-1})\right]_{B_{0}(\deltabar\sqrt{n})}\right)\notag\\
	\stackrel{\cref{wasserstein_ineq}}\leq& W_2\left(\left[\mathcal{L}\left(\sqrt{n}\left(\postparam-\map\right)\right)\right]_{B_{0}(\deltabar\sqrt{n})},\left[\mathcal{N}(0,\postfisherinformation(\map)^{-1})\right]_{B_{0}(\deltabar\sqrt{n})}\right)\notag\\
	\underset{\text{inequality}}{\stackrel{\text{transportation-entropy}}\leq} &\frac{\sqrt{2\,\text{KL}\left(\left.\left[\mathcal{N}(0,\postfisherinformation(\map)^{-1})\right]_{B_{0}(\deltabar\sqrt{n})}\,\right\|\,\left[\mathcal{L}\left(\sqrt{n}\left(\postparam-\map\right)\right)\right]_{B_{0}(\deltabar\sqrt{n})}\right)}}{\sqrt{\minevaluemap-\deltabar\, \overline{M}_2}}\notag\\
		\underset{\text{inequality}}{\stackrel{\text{log-Sobolev}}\leq} &\frac{\sqrt{\text{Fisher}\left(\left.\left[\mathcal{N}(0,\postfisherinformation(\map)^{-1})\right]_{B_{0}(\deltabar\sqrt{n})}\,\right\|\,\left[\mathcal{L}\left(\sqrt{n}\left(\postparam-\map\right)\right)\right]_{B_{0}(\deltabar\sqrt{n})}\right)}}{\minevaluemap-\deltabar\, \overline{M}_2}\notag\\
	\leq &\frac{1}{\minevaluemap-\deltabar\, \overline{M}_2}\notag\\
	&\cdot\sqrt{\int_{\|u\|\leq\deltabar\sqrt{n}}\frac{\sqrt{\det \jbar}e^{-u^T\postfisherinformation(\map)u/2}}{F_n^{MAP}(2\pi)^{d/2}}\left\|\postfisherinformation(\map)u+\frac{\logposterior'(n^{-1/2}u+\map)}{\sqrt{n}}\right\|^2du}\notag\\
	\underset{\text{theorem}}{\stackrel{\text{Taylor's}}\leq} &\frac{\overline{M}_2}{2\sqrt{n}\left(\minevaluemap-\deltabar\, \overline{M}_2\right)}\sqrt{\int_{\|u\|\leq\deltabar\sqrt{n}}\frac{\det \jbar^{1/2}e^{-u^T\postfisherinformation(\map)u/2}}{F_n^{MAP}(2\pi)^{d/2}}\left\|u\right\|^4du}\notag\\
	\leq &\frac{\sqrt{3}\,\Tr\left[\postfisherinformation(\map)^{-1}\right] \overline{M}_2}{2\left(\minevaluemap-\deltabar\overline{M}_2\right)\sqrt{n\left( 1- \decaybar\right)}},\label{i1_map_w1}
\end{align}
	as long as $n>\frac{\Tr\left[\postfisherinformation(\map)^{-1}\right]}{\left.\deltabar\right.^2}$, where the last iequality follows from Lemma \ref{lemma1}.

\subsubsection{Conclusion} The result now follows from adding together bounds in \Cref{i21_w1_map,i22_w1_map,i1_map_w1}.
\subsection{Proof of \Cref{w2_map}}
\subsubsection{Controlling term $I_2^{MAP}$} Now we wish to control $I_2^{MAP}$ uniformly over all functions $g$ which are of the form $g(u)=\left<v,u\right>^2$ for some $v\in\mathbbm{R}^d$ with $\|v\|=1$. Such functions satisfy the following property: $|g(u)|\leq \|u\|^2$. Using the notation of \Cref{introductory_arguments} and \cref{i21_map}, we therefore have that:

\begin{align*}
	I_{2,1}^{MAP}\leq & \int_{\|u\|>\deltabar\sqrt{n}}\|u\|^2\frac{\sqrt{\left|\text{det}\,\postfisherinformation(\map)\right|}e^{-u^T\postfisherinformation(\map)u/2}}{(2\pi)^{d/2}}du\notag\\
	&+\frac{ n^{d/2+1}e^{-n\kappabar}\hat{M}_1\left|\text{det}\left(\jhatplus\right)\right|^{1/2}\int_{\|v-\mle\|>\deltabar-\|\mle-\bar{\theta}_n\|}\| v-\bar{\theta}_n\|^2\pi(v)du}{\left(2\pi \right)^{d/2}\hspace{-1mm}\left( \hspace{-1mm}1-\decayhatplus\right)}.
\end{align*}

Now, for $U\sim\mathcal{N}(0,\postfisherinformation(\map)^{-1})$, and assuming that  $n>\frac{\Tr\left(\postfisherinformation(\map)^{-1}\right)}{\left.\deltabar\right.^2}$,
\begin{align}
	&\int_{\|u\|>\deltabar\sqrt{n}}\|u\|^2\frac{\sqrt{\left|\text{det}\,\postfisherinformation(\map)\right|}e^{-u^T\postfisherinformation(\map)u/2}}{(2\pi)^{d/2}}du\notag\\
	=&\int_0^{\infty}\mathbbm{P}\left[\|U\|^2\mathbbm{1}_{[\|U\|>\deltabar\sqrt{n}]}>t\right]dt\\
	\leq &\int_0^{\infty}\mathbbm{P}\left[\|U\|>\max(\sqrt{t},\deltabar\sqrt{n})\right]dt\notag\\
	\leq &\int_{\deltabar^2n}^{\infty}\exp\left[-\frac{1}{2}\left(\sqrt{t}-\sqrt{\Tr\left(\postfisherinformation(\map)^{-1}\right)}\right)^2\minevaluemap\right]dt\notag\\
	&+\deltabar^2n\,\exp\left[-\frac{1}{2}\left(\deltabar\sqrt{n}-\sqrt{\Tr\left(\postfisherinformation(\map)^{-1}\right)}\right)^2\minevaluemap\right] \notag\\
	\leq &\left(\deltabar^2n+ \sqrt{\frac{2\pi}{\minevaluemap}}\right) \exp\left[-\frac{1}{2}\left(\deltabar\sqrt{n}-\sqrt{\Tr\left(\postfisherinformation(\map)^{-1}\right)}\right)^2\minevaluemap\right],\label{normal_integral_w2}
\end{align}
where we have used Lemma \ref{lemma1}.
This means that
\begin{align}
	I_{2,1}^{MAP}\leq &\left(\deltabar^2n+ \sqrt{\frac{2\pi}{\minevaluemap}}\right)\decaybar\notag\\
	&+\frac{ n^{d/2+1}e^{-n\kappabar}\hat{M}_1\left|\text{det}\left(\jhatplus\right)\right|^{1/2}\int_{\|v-\mle\|>\deltabar-\|\mle-\bar{\theta}_n\|}\|v-\map\|^2\pi(v)du}{\left(2\pi \right)^{d/2}\hspace{-1mm}\left( 1-\decayhatplus\right)}.\label{i21_map_w2}
\end{align}
Now, using \cref{i22_map}, we have
\begin{align}
	I_{2,2}^{MAP}\leq &\frac{ \left|\text{det}\left(\jbarplus\right)\right|^{1/2}\left|\text{det}\left(\jbarminus\right)\right|^{-1/2}\Tr\left[\jbarminus^{-1}\right]}{ 1-\decaybarplus}\notag\\
	&\cdot \Bigg\{\decaybar+\frac{ n^{d/2}e^{-n\kappabar}\hat{M}_1\left|\text{det}\left(\jhatplus\right)\right|^{1/2}}{\left(2\pi \right)^{d/2}\hspace{-1mm}\left( 1-\decayhatplus\right)}\Bigg\}.\label{i22_map_w2}
\end{align}
A bound on $I_2^{MAP}$ now follows from adding up the bounds in \cref{i21_map_w2,i22_map_w2}.

\subsubsection{Controlling term $I_1^{MAP}$ using the log-Sobolev inequality and the transportation-information inequality}\label{sec:control_i1map}
Note that calculation from \cref{i1_map_w1} yields that
\begin{align}
	&W_2\left(\left[\mathcal{L}\left(\sqrt{n}\left(\postparam-\map\right)\right)\right]_{B_{0}(\deltabar\sqrt{n})},\left[\mathcal{N}(0,\postfisherinformation(\map)^{-1})\right]_{B_{0}(\deltabar\sqrt{n})}\right)\notag\\
	\leq & \frac{\sqrt{3}\,\Tr\left[\postfisherinformation(\map)^{-1}\right] \overline{M}_2}{2\left(\minevaluemap-\deltabar\overline{M}_2\right)\sqrt{n\left( 1-\decaybar\right)}}.\label{bound_w2_map}
\end{align}
Now, let us fix two random vectors: $X\sim \left[\mathcal{L}\left(\sqrt{n}\left(\postparam-\map\right)\right)\right]_{B_{0}(\deltabar\sqrt{n})}$ and \\ $Y\sim \left[\mathcal{N}(0,\postfisherinformation(\map)^{-1})\right]_{B_{0}(\deltabar\sqrt{n})}$ and a vector $v$ such that $\|v\|=1$. Let $\gamma$ denote the set of all couplings between $\left[\mathcal{L}\left(\sqrt{n}\left(\postparam-\map\right)\right)\right]_{B_{0}(\deltabar\sqrt{n})}$ and $ \left[\mathcal{N}(0,\postfisherinformation(\map)^{-1})\right]_{B_{0}(\deltabar\sqrt{n})}$. Let $(\tilde{X},\tilde{Y})\in \gamma$ be such that
\begin{align*}
	\inf_{(Z_1,Z_2)\in\gamma}\mathbbm{E}\left[\|Z_1-Z_2\|^2\right]=\mathbbm{E}\left[\|\tilde{X}-\tilde{Y}\|^2\right].
\end{align*}
It follows that:
\begin{align*}
	\mathbbm{E}\left[\left<v,X\right>^2\right]-\mathbbm{E}\left[\left<v,Y\right>^2\right]=&\mathbbm{E}\left[\left<v,\tilde{X}-\tilde{Y}\right>\left<v,\tilde{X}+\tilde{Y}\right>\right]\\
	=&\mathbbm{E}\left[\left<v,\tilde{X}-\tilde{Y}\right>^2\right]+2\mathbbm{E}\left[\left<v,\tilde{X}-\tilde{Y}\right>\left<v,\tilde{Y}\right>\right]\\
	\leq&\mathbbm{E}\left[\left\|\tilde{X}-\tilde{Y}\right\|^2\right]+2\sqrt{\mathbbm{E}\left[\left\|\tilde{X}-\tilde{Y}\right\|^2\right]}\sqrt{\mathbbm{E}\left[\|Y\|^2\right]}.
\end{align*}
Therefore, for all functions $g$ which are of the form $g(u)=\left<v,u\right>^2$ for some $v\in\mathbbm{R}^d$ with $\|v\|=1$, we have that
\begin{align}
	I_1^{MAP}\leq &W_2\left(\left[\mathcal{L}\left(\sqrt{n}\left(\postparam-\map\right)\right)\right]_{B_{0}(\deltabar\sqrt{n})},\left[\mathcal{N}(0,\postfisherinformation(\map)^{-1})\right]_{B_{0}(\deltabar\sqrt{n})}\right)^2\notag\\
	&+2W_2\left(\left[\mathcal{L}\left(\sqrt{n}\left(\postparam-\map\right)\right)\right]_{B_{0}(\deltabar\sqrt{n})},\left[\mathcal{N}(0,\postfisherinformation(\map)^{-1})\right]_{B_{0}(\deltabar\sqrt{n})}\right)\notag\\
	&\cdot \frac{\sqrt{\Tr\left[\postfisherinformation(\map)^{-1}\right]}}{\sqrt{1-\decaybar}},\label{i1_map_w2}
\end{align}
if $n>\frac{\Tr\left[\jbar^{-1}\right]}{\deltabar^2}$. The final bound on $I_1^{MAP}$ may now be obtained by using \cref{bound_w2_map}.

\subsubsection{Conclusion} The result now follows by combining \cref{i1_map_w2} with \cref{bound_w2_map} and then summing together with \cref{i21_map_w2,i22_map_w2}.

\section{Proofs of Theorems \ref{theorem_main}, \ref{theorem_1wasserstein} and \ref{theorem_2wasserstein}} \label{appendix_c}
		Throughout this section we adopt the notation of \Cref{introductory_arguments}. For any probability measure $\mu$, we let $[\mu]_{B_0(\deltamle\sqrt{n})}$ denote its restriction (truncation) to the ball of radius $\deltamle\sqrt{n}$ around $0$. In all the proofs below, we wish to control the quantity $D_g^{MLE}$ of \cref{dgmle} for all functions $g$ that satisfy certain prescribed criteria. In the proof of \Cref{main_map}, we look at functions $g$ that are indicators of measurable sets, in the proof of \Cref{thm:wasserstein_map} we look at 1-Lipschitz functions $g$ and in the proof of \Cref{w2_map} at those which are of the form $g(x)=\left<v,x\right>^2$ for some $v\in\mathbbm{R}^d$ with $\|v\|=1$. In order to prove \Cref{theorem_main,theorem_1wasserstein,theorem_2wasserstein}, we will bound terms $I_2^{MLE}$ and $I_1^{MAP}$ of \cref{i1i2mle} separately.

\subsection{Proof of \Cref{theorem_main}}
\subsubsection{Controlling term $I_2^{MLE}$}
We wish to obtain a uniform bound on $I_2^{MLE}$ for all functions $g$ which are indicators of measurable sets. Every indicator function is upper-bounded by one, so we can use Remark \ref{remark_proofs_a} to obtain:

	\begin{align*}
	I_{2,2}^{MLE}\leq& \decayhat+\frac{ n^{d/2}e^{-n\kappamle}\hat{M}_1\left|\text{det}\left(\jhatplus\right)\right|^{1/2}}{\left(2\pi \right)^{d/2}\left( 1-\decayhatplus\right)}.
\end{align*}
Similarly, since $|g|\leq 1$, we can use \cref{i21} and Lemma \ref{lemma1} to obtain
\begin{align*}
	I_{2,1}^{MLE}\leq & \decayhat+\frac{ n^{d/2}e^{-n\kappamle}\hat{M}_1\left|\text{det}\left(\jhatplus\right)\right|^{1/2}}{\left(2\pi \right)^{d/2}\hspace{-1mm}\left( 1-\decayhatplus\right)},
\end{align*}
		which implies that
		\begin{align}
			I_2^{MLE}\leq& 2\,\decayhat+\frac{2 n^{d/2}e^{-n\kappamle}\hat{M}_1\left|\text{det}\left(\jhatplus\right)\right|^{1/2}}{\left(2\pi \right)^{d/2}\hspace{-1mm}\left( 1-\decayhatplus\right)}.\label{i2_mle_tv}
		\end{align}
\subsubsection{Controlling term $I_1^{MLE}$ using the log-Sobolev inequality}\label{subs:controlling_i1_mle}
We shall proceed as we did in \Cref{subs:controlling_i1}.   Let $\text{KL}\left(\cdot\|\cdot\right)$ denote the Kullback-Leibler divergence (\cref{def:kl_divergence}) and $\text{Fisher}\left(\cdot\|\cdot\right)$ denote the Fisher divergence (\cref{def:fisher_divergence}).
Let $F_n^{MLE}$ be given by \cref{fn_mle}.

Note that, by \Cref{assump1}, for $t$ such that $\|t\|<\sqrt{n}\deltamle$, we have
\begin{align*}
	-n^{-1}\loglikelihood''(\mle+n^{-1/2}t)\succeq\fisherinformation(\mle)-\deltamle M_2I_{d\times d}\succeq \left(\minevaluemle- \deltamle M_2\right)I_{d\times d}.
\end{align*}
This means that, inside the convex set $\{t\in\mathbbm{R}^d\,:\,\|t\|<\sqrt{n}\deltamle\}$, the measure whose density, up to a normalizing constant, is given by $t\mapsto e^{L_n(\mle+n^{-1/2}t)}\mathbbm{1}_{\{\|t\|<\sqrt{n}\deltamle \}}(t)$, is $ \left(\minevaluemle- \deltamle M_2\right)$-strongly log-concave (see e.g. \citealt{saumard}). Also, note that $$\frac{\sup_{\|t\|\leq\deltamle\sqrt{n}}\prior(n^{-1/2}t+\mle)}{\inf_{\|t\|\leq\deltamle\sqrt{n}}\prior(n^{-1/2}t+\mle)}\leq \widetilde{M}_1\hat{M}_1.$$

Using the \textbf{Bakry-\'Emery criterion}, given by Proposition \cref{prop_lsi}, and the \textbf{Holley-Stroock perturbation principle}, given by Proposition \ref{holley-stroock}, we therefore have that $\left[\sqrt{n}\left(\postparam-\mle\right)\right]_{B_{0}(\deltamle\sqrt{n})}$ satisfies the \textbf{log-Sobolev inequality} LSI$\left(\frac{\minevaluemle- \deltamle M_2}{\widetilde{M}_1\hat{M}_1 }\right)$  (see Definition \ref{def:log-sobolev}). By combining the log-Sobolev inequality with \textbf{Pinsker's inequality} (Proposition \ref{pinsker}) we obtain that, for all functions $g$, which are indicators of measurable sets,
\begin{align}
	I_1^{MLE}\leq& \text{TV}\left(\left[\mathcal{L}\left(\sqrt{n}\left(\postparam-\mle\right)\right)\right]_{B_{0}(\deltamle\sqrt{n})},\left[\mathcal{N}(0,\fisherinformation(\mle)^{-1})\right]_{B_{0}(\deltamle\sqrt{n})}\right)\notag\\
	\underset{\text{inequality}}{\stackrel{\text{Pinsker's}}\leq}&\sqrt{\frac{1}{2}\text{KL}\left(\left.\left[\mathcal{N}(0,\fisherinformation(\mle)^{-1})\right]_{B_{0}(\deltamle\sqrt{n})}\,\right\|\,\left[\mathcal{L}\left(\sqrt{n}\left(\postparam-\mle\right)\right)\right]_{B_{0}(\deltamle\sqrt{n})}\right)}\notag\\
	\underset{\text{inequality}}{\stackrel{\text{log-Sobolev}}\leq} &\frac{\sqrt{\widetilde{M}_1\hat{M}_1}}{2\sqrt{\minevaluemle-\deltamle M_2}} \sqrt{\text{Fisher}\left(\left.\left[\mathcal{N}(0,\fisherinformation(\mle)^{-1})\right]_{B_{0}(\deltamle\sqrt{n})}\,\right\|\,\left[\mathcal{L}\left(\sqrt{n}\left(\postparam-\mle\right)\right)\right]_{B_{0}(\deltamle\sqrt{n})}\right)} \notag\\
	\leq &\frac{\sqrt{\widetilde{M}_1\hat{M}_1}}{2\sqrt{\minevaluemle-\deltamle M_2}}\notag\\
	&\cdot\sqrt{\int_{\|u\|\leq\deltamle\sqrt{n}}\frac{\sqrt{\det \jhat}e^{-u^T\fisherinformation(\mle)u/2}}{F_n^{MLE}(2\pi)^{d/2}}\left\|\fisherinformation(\mle)u+\frac{\loglikelihood'(n^{-1/2}u+\mle)}{\sqrt{n}}\right\|^2du}\notag\\
	&+\frac{\sqrt{\widetilde{M}_1\hat{M}_1}}{2\sqrt{n\left(\minevaluemle-\deltamle M_2\right)}}\notag\\
	&\hspace{1cm}\cdot\sqrt{\int_{\|u\|\leq\deltamle\sqrt{n}}\frac{\sqrt{\det \jhat}e^{-u^T\fisherinformation(\mle)u/2}}{F_n^{MLE}(2\pi)^{d/2}}\left\|\frac{\prior'(n^{-1/2}u+\mle)}{\prior(n^{-1/2}u+\mle)}\right\|^2du}\notag\\
	\underset{\text{theorem}}{\stackrel{\text{Taylor's}}\leq} &\frac{\sqrt{\widetilde{M}_1\hat{M}_1}M_2}{4\sqrt{n\left(\minevaluemle-\deltamle M_2\right)}}\sqrt{\int_{\|u\|\leq\deltamle\sqrt{n}}\frac{\sqrt{\det \jhat}e^{-u^T\fisherinformation(\mle)u/2}}{F_n^{MLE}(2\pi)^{d/2}}\left\|u\right\|^4du}\notag\\
	&+\frac{M_1\sqrt{\widetilde{M}_1\hat{M}_1}}{2\sqrt{n\left(\minevaluemle-\deltamle M_2\right)}}\notag\\
	\leq &\frac{\sqrt{3}\,\Tr\left[\fisherinformation(\mle)^{-1}\right]\sqrt{\widetilde{M}_1\hat{M}_1}\,M_2}{4\sqrt{n\left(\minevaluemle-\deltamle M_2\right)\left( 1- \decayhat\right)}}+\frac{M_1\sqrt{\widetilde{M}_1\hat{M}_1}}{2\sqrt{n\left(\minevaluemle-\deltamle M_2\right)}},\label{i1_mle_tv}
\end{align}
as long as $n>\frac{\Tr\left[\fisherinformation(\mle)^{-1}\right]}{\deltamle^2}$, where we have used Lemma \ref{lemma1}.

\subsubsection{Conclusion}	The result now follows by adding together the bounds in \cref{i2_mle_tv,i1_mle_tv}.

\subsection{Proof of \Cref{theorem_1wasserstein}}
\subsubsection{Controlling term $I_2^{MLE}$}
Now we wish to control $I_2^{MLE}$ uniformly over all functions $g$ which are 1-Lipschitz and WLOG set $g(0)=0$. It follows that $|g(u)|\leq \|u\|$ and, using the notation of \Cref{introductory_arguments} and \cref{i21},
	\begin{align*}
	I_{2,1}^{MLE}\leq & \int_{\|u\|>\deltamle\sqrt{n}}\|u\|\frac{\sqrt{\left|\text{det}\,\fisherinformation(\mle)\right|}e^{-u^T\fisherinformation(\mle)u/2}}{(2\pi)^{d/2}}du\notag\\
	&+\frac{ n^{d/2+1/2}e^{-n\kappamle}\hat{M}_1\left|\text{det}\left(\jhatplus\right)\right|^{1/2}\int_{\|u\|>\deltamle}\|u\|\pi(u+\mle)du}{\left(2\pi \right)^{d/2}\hspace{-1mm}\left( 1-\decayhatplus\right)}.
\end{align*}
A calculation similar to \cref{normal_integral_w1} reveals that
\begin{align*}
&\int_{\|u\|>\deltamle\sqrt{n}}\|u\|\frac{\sqrt{\left|\text{det}\,\fisherinformation(\mle)\right|}e^{-u^T\fisherinformation(\mle)u/2}}{(2\pi)^{d/2}}du 
\leq \left(\deltamle\sqrt{n}+\sqrt{\frac{2\pi}{\minevaluemle}}\right)\decayhat
\end{align*}
	and so
	\begin{align}
		I_{2,1}^{MLE}\leq& \left(\deltamle\sqrt{n}+\sqrt{\frac{2\pi}{\minevaluemle}}\right)\decayhat\notag\\
		&+\frac{ n^{d/2+1/2}e^{-n\kappamle}\hat{M}_1\left|\text{det}\left(\jhatplus\right)\right|^{1/2}\int_{\|u\|>\deltamle}\|u\|\pi(u+\mle)du}{\left(2\pi \right)^{d/2}\hspace{-1mm}\left( 1-\decayhatplus\right)}.\label{i21_mle_w1}
	\end{align}

	Now, using \cref{i221}, we obtain
	\begin{align}
		I_{2,2}^{MLE}\leq &\frac{ \hat{M}_1\widetilde{M}_1\left|\text{det}\left(\jhatplus\right)\right|^{1/2}\left|\text{det}\left(\jhatminus\right)\right|^{-1/2}\sqrt{\Tr\left[\jhatminus^{-1}\right]}}{ 1-\decayhatplus}\notag\\
		&\cdot\left\{\decayhat+\frac{ n^{d/2}e^{-n\kappamle}\hat{M}_1\left|\text{det}\left(\jhatplus\right)\right|^{1/2}}{\left(2\pi \right)^{d/2}\hspace{-1mm}\left( 1-\decayhatplus\right)}\right\}.\label{i22_mle_w1}
	\end{align}

	Adding together bounds from \cref{i21_mle_w1,i22_mle_w1} yields a bound on $I_2^{MLE}$.

\subsubsection{Controlling term $I_1^{MLE}$ using the log-Sobolev inequality and the transportation-entropy inequality}
As in \Cref{subs:controlling_i1_mle}, we shall use the log-Sobolev inequality for the measure $\left[\mathcal{L}\left(\sqrt{n}\left(\postparam-\mle\right)\right)\right]_{B_{0}(\deltamle\sqrt{n})}$. A consequence of the log-Sobolev inequality is that we can apply the transportation-entropy inequality for $\left[\mathcal{L}\left(\sqrt{n}\left(\postparam-\mle\right)\right)\right]_{B_{0}(\deltamle\sqrt{n})}$ (Proposition \ref{prop_talagrand}), which lets us upper bound the 1- and 2-Wasserstein distances by a constant times the KL divergence. The log-Sobolev inequality then upper-bounds the KL divergence in terms of the Fisher divergence. Let $W_2(\cdot,\cdot)$ denote the $2$-Wasserstein distance and $W_1(\cdot,\cdot)$ denote the 1-Wasserstein distance. We have that, for all $1$-Lipschitz test functions $g$,
\begin{align}
	I_1^{MLE}\leq &W_1\left(\left[\sqrt{n}\left(\postparam-\mle\right)\right]_{B_{0}(\deltamle\sqrt{n})},\left[\mathcal{N}(0,\fisherinformation(\mle)^{-1})\right]_{B_{0}(\deltamle\sqrt{n})}\right)\notag\\
	\stackrel{\cref{wasserstein_ineq}}\leq&W_2\left(\left[\sqrt{n}\left(\postparam-\mle\right)\right]_{B_{0}(\deltamle\sqrt{n})},\left[\mathcal{N}(0,\fisherinformation(\mle)^{-1})\right]_{B_{0}(\deltamle\sqrt{n})}\right)\notag\\
	\underset{\text{inequality}}{\stackrel{\text{transportation-}}{\stackrel{\text{entropy}}\leq}}& \sqrt{\frac{2\widetilde{M}_1\hat{M}_1\,\,\text{KL}\left(\left.\left[\mathcal{N}(0,\fisherinformation(\mle)^{-1})\right]_{B_{0}(\deltamle\sqrt{n})}\,\right\|\,\left[\mathcal{L}\left(\sqrt{n}\left(\postparam-\mle\right)\right)\right]_{B_{0}(\deltamle\sqrt{n})}\right)}{\minevaluemle-\deltamle M_2} }\notag\\
	\underset{\text{inequality}}{\stackrel{\text{log-Sobolev}}\leq}&\frac{\widetilde{M}_1\hat{M}_1\sqrt{ \text{Fisher}\left(\left.\left[\mathcal{N}(0,\fisherinformation(\mle)^{-1})\right]_{B_{0}(\deltamle\sqrt{n})}\,\right\|\,\left[\mathcal{L}\left(\sqrt{n}\left(\postparam-\mle\right)\right)\right]_{B_{0}(\deltamle\sqrt{n})}\right)}}{\minevaluemle-\deltamle M_2} \notag\\
	\leq &\frac{\widetilde{M}_1\hat{M}_1}{\minevaluemle-\deltamle M_2}\notag\\
	&\cdot\sqrt{\int_{\|u\|\leq\deltamle\sqrt{n}}\frac{\sqrt{\det \jhat}e^{-u^T\fisherinformation(\mle)u/2}}{F_n^{MLE}(2\pi)^{d/2}}\left\|\fisherinformation(\mle)u+\frac{\loglikelihood'(n^{-1/2}u+\mle)}{\sqrt{n}}\right\|^2du}\notag\\
	&+\frac{\widetilde{M}_1\hat{M}_1}{\sqrt{n}\left(\minevaluemle-\deltamle M_2\right)}\notag\\
	&\hspace{1cm}\cdot\sqrt{\int_{\|u\|\leq\deltamle\sqrt{n}}\frac{\sqrt{\det \jhat}e^{-u^T\fisherinformation(\mle)u/2}}{F_n^{MLE}(2\pi)^{d/2}}\left\|\frac{\prior'(n^{-1/2}u+\mle)}{\prior(n^{-1/2}u+\mle)}\right\|^2du}\notag\\
	\underset{\text{theorem}}{\stackrel{\text{Taylor's}}\leq} &\frac{\widetilde{M}_1\hat{M}_1M_2}{2\sqrt{n}\left(\minevaluemle-\deltamle M_2\right)}\sqrt{\int_{\|u\|\leq\deltamle\sqrt{n}}\frac{\sqrt{\det \jhat}e^{-u^T\fisherinformation(\mle)u/2}}{F_n^{MLE}(2\pi)^{d/2}}\left\|u\right\|^4du}\notag\\
	&+\frac{M_1\widetilde{M}_1\hat{M}_1}{\sqrt{n}\left(\minevaluemle-\deltamle M_2\right)}\notag\\
	\leq &\frac{\sqrt{3}\,\Tr\left[\fisherinformation(\mle)^{-1}\right]\widetilde{M}_1\hat{M}_1\,M_2}{2\left(\minevaluemle-\deltamle M_2\right)\sqrt{n\left( 1- \decayhat\right)}}+\frac{M_1\widetilde{M}_1\hat{M}_1}{\sqrt{n}\left(\minevaluemle-\deltamle M_2\right)}.\label{i1_mle_w1}
\end{align}

\subsubsection{Conclusion} The result now follows from adding together the bounds in \cref{i21_mle_w1,i22_mle_w1,i1_mle_w1}.

\subsection{Proof of \Cref{theorem_2wasserstein}}
\subsubsection{Controlling term $I_2^{MLE}$}
Now we want to control $I_2^{MLE}$ uniformly over all functions $g$ which are of the form $g(u)=\left<v,u\right>$, for some $v\in\mathbbm{R}^d$ with $\|v\|=1$. For such functions we have that $|g(u)|\leq \|u\|^2$. Using the notation of \Cref{introductory_arguments} and \cref{i21}, we have that
\begin{align*}
	I_{2,1}^{MLE}\leq & \int_{\|u\|>\deltamle\sqrt{n}}\|u\|^2\frac{\sqrt{\left|\text{det}\,\fisherinformation(\mle)\right|}e^{-u^T\fisherinformation(\mle)u/2}}{(2\pi)^{d/2}}du\notag\\
	&+\frac{ n^{d/2+1}e^{-n\kappamle}\hat{M}_1\left|\text{det}\left(\jhatplus\right)\right|^{1/2}\int_{\|u\|>\deltamle}\|u\|^2\pi(u+\mle)du}{\left(2\pi \right)^{d/2}\hspace{-1mm}\left( 1- \decayhatplus\hspace{-1mm}\right)}.
\end{align*}
A calculation similar to \cref{normal_integral_w2} reveals that
\begin{align*}
	&\int_{\|u\|>\deltamle\sqrt{n}}\|u\|^2\frac{\sqrt{\left|\text{det}\,\fisherinformation(\mle)\right|}e^{-u^T\fisherinformation(\mle)u/2}}{(2\pi)^{d/2}}du \leq\left(\deltamle^2n+\sqrt{\frac{2\pi}{\minevaluemle}}\right)\decayhat
\end{align*}
and so
\begin{align}
	I_{2,1}^{MLE}\leq& \left(\deltamle^2n+\sqrt{\frac{2\pi}{\minevaluemle}}\right)\decayhat\notag\\
	&+\frac{ n^{d/2+1}e^{-n\kappamle}\hat{M}_1\left|\text{det}\left(\jhatplus\right)\right|^{1/2}\int_{\|u\|>\deltamle}\|u\|^2\pi(u+\mle)du}{\left(2\pi \right)^{d/2}\hspace{-1mm}\left( 1-\decayhatplus\right)}.\label{i21_mle_w2}
\end{align}

	Now, using \cref{i221}, we obtain
\begin{align}
	I_{2,2}^{MLE}\leq &\frac{ \hat{M}_1\widetilde{M}_1\left|\text{det}\left(\jhatplus\right)\right|^{1/2}\left|\text{det}\left(\jhatminus\right)\right|^{-1/2}\Tr\left[\jhatminus^{-1}\right]}{ 1-\decayhatplus}\notag\\
	&\cdot\Bigg\{\decayhat+\frac{ n^{d/2}e^{-n\kappamle}\hat{M}_1\left|\text{det}\left(\jhatplus\right)\right|^{1/2}}{\left(2\pi \right)^{d/2}\hspace{-1mm}\left( 1-\decayhatplus\right)}\Bigg\}.\label{i22_mle_w2}
\end{align}
Adding together bounds from \cref{i21_mle_w2,i22_mle_w2} yields a bound on $I_2^{MLE}$.

\subsubsection{Controlling term $I_1^{MLE}$ using the log-Sobolev inequality and the transportation-entropy inequality}
Note that the calculation in \cref{i1_mle_w1} yields that
\begin{align}
	&W_2\left(\left[\sqrt{n}\left(\postparam-\mle\right)\right]_{B_{0}(\deltamle\sqrt{n})},\left[\mathcal{N}(0,\fisherinformation(\mle)^{-1})\right]_{B_{0}(\deltamle\sqrt{n})}\right)\notag\\
	\leq &\frac{\sqrt{3}\,\Tr\left[\fisherinformation(\mle)^{-1}\right]\widetilde{M}_1\hat{M}_1\,M_2}{2\left(\minevaluemle-\deltamle M_2\right)\sqrt{n\left( 1- \decayhat\right)}}+\frac{M_1\widetilde{M}_1\hat{M}_1}{\sqrt{n}\left(\minevaluemle-\deltamle M_2\right)}.\label{w2_mle_w2}
\end{align}
Therefore, for all functions $g$ which are of the form $g(u)=\left<v,u\right>^2$ for some $v\in\mathbbm{R}^d$ with $\|v\|=1$, an argument similar to the one that led to \cref{i1_map_w2} yields:
\begin{align}
	I_1^{MLE}\leq &W_2\left(\left[\mathcal{L}\left(\sqrt{n}\left(\postparam-\mle\right)\right)\right]_{B_{0}(\deltamle\sqrt{n})},\left[\mathcal{N}(0,\fisherinformation(\mle)^{-1})\right]_{B_{0}(\deltamle\sqrt{n})}\right)^2\notag\\
	&+ \frac{2\sqrt{\Tr\left[\fisherinformation(\mle)^{-1}\right]}}{\sqrt{ 1- \decayhat}}W_2\left(\left[\mathcal{L}\left(\sqrt{n}\left(\postparam-\mle\right)\right)\right]_{B_{0}(\deltamle\sqrt{n})},\left[\mathcal{N}(0,\fisherinformation(\mle)^{-1})\right]_{B_{0}(\deltamle\sqrt{n})}\right),\label{i1_mle_w2}
\end{align}
and the final bound on $I_1^{MLE}$ follows from \cref{w2_mle_w2}.
\subsubsection{Conclusion}
The result now follows from combining \cref{w2_mle_w2,i1_mle_w2} and adding together with \cref{i21_mle_w2,i22_mle_w2}.

	\section{Proof of Theorem \ref{theorem_univariate}}\label{appendix_d}
	In this section, we concentrate on the univariate context (i.e. on $d=1$). We shall apply Stein's method, in the framework described in \citet[Section 2.1]{ernst_reinert_swan}. Before we do that, however, let us recall that we want to upper-bound the quantity $D_g^{MLE}$ given by \cref{dgmle} for all functions $g$ for which the two expectations in \cref{dgmle} exist.
	Recall the definition of  $C_n^{MLE}$ from \cref{c_n_mle} and let:
	\begin{align}\label{h_def}
		h(t)=h_g^{MLE}(t)=g(t)-\frac{n^{-1/2}}{C_n}\int_{-\deltamle\sqrt{n}}^{\deltamle\sqrt{n}}g(u)\posterior(n^{-1/2}u+\mle)du.
	\end{align}
	We can repeat the calculation leading to \cref{i1i2mle}, without dividing the first term after the first inequality by $F_n^{MLE}$. We then obtain:
	\begin{align*}
		D_g^{MLE}\leq &\left|\int_{-\deltamle\sqrt{n}}^{\deltamle\sqrt{n}}h(u)\frac{e^{-t^2/(2\sigma_n^2)}}{\sqrt{2\pi\sigma_n^2}}\right|+\left|\int_{-\deltamle\sqrt{n}}^{\deltamle\sqrt{n}} h(u)\left[\frac{e^{-u^2/(2\sigma_n^2)}}{\sqrt{2\pi\sigma_n^2}}-n^{-1/2}\posterior(n^{-1/2}u+\mle)\right]    \right|\\
		&\hspace{11cm}=:\tilde{I}_1+\tilde{I}_2.
	\end{align*}
	We will bound $\tilde{I}_1$ and $\tilde{I}_2$ separately.
	\subsection{Controlling term $\tilde{I}_2$}
	Note that $\tilde{I}_2$ is the same as $I_2^{MLE}$ defined by \cref{i1i2mle}, for $d=1$. We will use the calculations leading to \cref{i21,i22}. Instead of using Lemma \ref{lemma1}, we will, however, apply the standard one-dimensional Gaussian concentration inequality, which says that, for $Z_n\sim\mathcal{N}\left(0,\sigma_n^2\right)$,
	\begin{align*}
		\mathbbm{P}\left[   |Z_n|>\deltamle\sqrt{n} \right]\leq 2e^{-\deltamle^2n/(2\sigma_n^2)}.
	\end{align*}
We obtain
	 \begin{align}
		\tilde{I}_2\leq& \left|\int_{|u|>\deltamle\sqrt{n}}g(u)\frac{e^{-u^2/(2\sigma_n^2)}}{\sqrt{2\pi\sigma_n^2}}du\right|\notag\\
		&+\frac{ n^{1/2}e^{-n\kappamle}\hat{M}_1\left(\frac{1}{\sigma_n^2}+\frac{\deltamle M_2}{3}\right)^{1/2}\int_{
				|u|>\deltamle}|g(u\sqrt{n})|\pi(u+\mle)du}{\sqrt{2\pi} \left\{1-2\exp\left[-\frac{1}{2}\left(\frac{1}{\sigma_n^2}+\frac{M_2\deltamle}{3}\right)\deltamle^2n\right]\right\}}\notag\\
		&+\frac{ \hat{M}_1\widetilde{M}_1\left(\frac{1}{\sigma_n^2}+\frac{\deltamle M_2}{3}\right)^{1/2}\int_{|t|\leq\deltamle\sqrt{n}}|g(t)|e^{-\frac{1}{2}\left(\frac{1}{\sigma_n^2}-\frac{M_2\deltamle}{3}\right)t^2}dt}{\sqrt{2\pi}\, \hspace{-1mm}\left\{ 1-2\exp\left[-\frac{1}{2}\left(\frac{1}{\sigma_n^2}+\frac{M_2\deltamle}{3}\right)\deltamle^2n\right]\right\}}\notag\\
		&\hspace{3cm}\cdot\Bigg\{2e^{-\deltamle^2n/(2\sigma_n^2)}+\frac{ n^{1/2}e^{-n\kappamle}\hat{M}_1\left(\frac{1}{\sigma_n^2}+\frac{\deltamle M_2}{3}\right)^{1/2}}{\sqrt{2\pi}\left\{1-2\exp\left[-\frac{1}{2}\left(\frac{1}{\sigma_n^2}+\frac{M_2\deltamle}{3}\right)\deltamle^2n\right]\right\}}\Bigg\}.\label{i2_univariate_final}
	\end{align}

	\subsection{Controlling term $\tilde{I}_1$ using Stein's method}
	In this section we will use Stein's method in the framework of \citet[Section 2.1]{ernst_reinert_swan}. Note that, by integration by parts, for all continuous functions $f:\left[-\deltamle\sqrt{n},\deltamle\sqrt{n}\right]\to\mathbbm{R}$ which are differentiable on $\left(-\deltamle\sqrt{n},\deltamle\sqrt{n}\right)$ and satisfy $f(-\deltamle\sqrt{n})=f(\deltamle\sqrt{n})$, we have
	\begin{align}
		&\int_{-\deltamle\sqrt{n}}^{\deltamle\sqrt{n}}f'(t)\frac{e^{-t^2/(2\sigma_n^2)}}{\sqrt{2\pi\sigma_n^2}} dt=\int_{-\deltamle\sqrt{n}}^{\deltamle\sqrt{n}}tf(t)\frac{e^{-t^2/(2\sigma_n^2)}}{\sigma_n^2\sqrt{2\pi\sigma_n^2}}dt\label{stein_identity}
%		\quad\text{and}\\
%		&\int_{-\deltamle\sqrt{n}}^{\deltamle\sqrt{n}}f'(t)\frac{n^{-1/2}\posterior(n^{-1/2}t+\mle)}{C_n^{MLE}}dt\\
%		&\hspace{3cm}=-\int_{-\deltamle\sqrt{n}}^{\deltamle\sqrt{n}}f(t)\left(\dv t\log \posterior(n^{-1/2}t+\mle)\right)\frac{n^{-1/2}\posterior(n^{-1/2}t+\mle)}{C_n^{MLE}}dt.
	\end{align}

	Now, for our function $h$, given by \cref{h_def}, let
	\begin{align}\label{f_definition}
	f(t):=\begin{cases}
		\frac{1}{\posterior(n^{-1/2} t + \mle)}\int_{-\deltamle\sqrt{n}}^{t} h(u)\posterior(n^{-1/2} u+\mle)du,& \text{if }t\in\left(-\deltamle\sqrt{n},\deltamle\sqrt{n}\right)\\
		0,&\text{otherwise}.
	\end{cases}
	\end{align}
	Note that $f$ is continuous on $[-\deltamle\sqrt{n},\deltamle\sqrt{n}]$, differentiable on $(-\deltamle\sqrt{n},\deltamle\sqrt{n})$ and $f(-\deltamle\sqrt{n})=f(\deltamle\sqrt{n})=0$. Moreover, on $(-\deltamle\sqrt{n},\deltamle\sqrt{n})$, $f$ solves the Stein equation associated to the distribution of $\sqrt{n}\left(\postparam-\mle\right)$ for test function $h$, as described in \citet[Section 2.1]{ernst_reinert_swan}. In other words,
	\begin{align}
		h(t)=f'(t)+f(t)\left(\dv t \log\posterior(n^{-1/2}t+\mle)\right),\qquad t\in\left(-\deltamle\sqrt{n},\deltamle\sqrt{n}\right).\label{sol_stein_eq}
	\end{align}
Now, by Taylor's theorem, we obtain that, for some $c\in(0,1)$,
	\begin{align*}
		&\tilde{I}_1\\
		\stackrel{\cref{sol_stein_eq}}=&\left|\int_{-\deltamle/\sqrt{n}}^{\deltamle\sqrt{n}}\left[f'(t)+f(t)\left(\dv t \loglikelihood(n^{-1/2} t+\mle) + \dv t \log\prior(n^{-1/2} t+\mle)\right)\right]\frac{e^{-t^2/(2\sigma_n^2)}}{\sqrt{2\pi\sigma_n^2}}dt\right|\\
		\leq&\left|\int_{-\deltamle\sqrt{n}}^{\deltamle\sqrt{n}}\left[f'(t)+f(t)\left(\frac{\loglikelihood'(\mle)}{\sqrt{n}}+\frac{t \loglikelihood''(\mle)}{n} +\frac{t^2 }{2n^{3/2}}\loglikelihood'''(\mle+cn^{-1/2}t)\right)\right]\frac{e^{-t^2/(2\sigma_n^2)}}{\sqrt{2\pi\sigma_n^2}}dt\right|\\
		&+\frac{1}{\sqrt{n}}\left|\int_{-\deltamle\sqrt{n}}^{\deltamle\sqrt{n}}f(t)\frac{\prior'(n^{-1/2}t+\mle)}{\prior(n^{-1/2}t+\mle)}\cdot\frac{e^{-t^2/(2\sigma_n^2)}}{\sqrt{2\pi\sigma_n^2}}dt\right|\\
		=&\left|\int_{-\deltamle\sqrt{n}}^{\deltamle\sqrt{n}}\left[f'(t)+f(t)\left(-\frac{t}{\sigma_n^2} +\frac{t^2 }{2n^{3/2}}\loglikelihood'''(\mle+cn^{-1/2}t)\right)\right]\frac{e^{-t^2/(2\sigma_n^2)}}{\sqrt{2\pi\sigma_n^2}}dt\right|\\
		&+\frac{1}{\sqrt{n}}\left|\int_{-\deltamle\sqrt{n}}^{\deltamle\sqrt{n}}f(t)\frac{\prior'(n^{-1/2}t+\mle)}{\prior(n^{-1/2}t+\mle)}\cdot\frac{e^{-t^2/(2\sigma_n^2)}}{\sqrt{2\pi\sigma_n^2}}dt\right|\\
		\leq &\left|\int_{-\deltamle\sqrt{n}}^{\deltamle\sqrt{n}}\left[f'(t)-\frac{tf(t)}{\sigma_n^2}\right]\frac{e^{-t^2/(2\sigma_n^2)}}{\sqrt{2\pi\sigma_n^2}}dt\right|+\frac{M_2}{2\sqrt{n}}\int_{-\deltamle\sqrt{n}}^{\deltamle\sqrt{n}}|t^2f(t)|\,\frac{e^{-t^2/(2\sigma_n^2)}}{\sqrt{2\pi\sigma_n^2}}dt\\
		&+\frac{M_1}{\sqrt{n}}\int_{-\deltamle\sqrt{n}}^{\deltamle\sqrt{n}}|f(t)|\frac{e^{-t^2/(2\sigma_n^2)}}{\sqrt{2\pi\sigma_n^2}}dt\\
		\stackrel{\cref{stein_identity}}=&\frac{M_2}{2\sqrt{n}}\int_{-\deltamle\sqrt{n}}^{\deltamle\sqrt{n}}|t^2f(t)|\,\frac{e^{-t^2/(2\sigma_n^2)}}{\sqrt{2\pi\sigma_n^2}}dt+\frac{M_1}{\sqrt{n}}\int_{-\deltamle\sqrt{n}}^{\deltamle\sqrt{n}}|f(t)|\frac{e^{-t^2/(2\sigma_n^2)}}{\sqrt{2\pi\sigma_n^2}}dt\\
		\stackrel{\cref{f_definition}}=&\frac{M_2}{2\sqrt{2\pi\sigma_n^2\,n}}\int_{-\deltamle\sqrt{n}}^{\deltamle\sqrt{n}}\frac{ t^2e^{-t^2/(2\sigma_n^2)}}{\posterior(n^{-1/2} t+\mle)}\left|\int_{-\deltamle\sqrt{n}}^t h(u)\posterior(n^{-1/2} u+\mle)du\right|dt\\
		&+\frac{M_1}{\sqrt{2\pi\sigma_n^2\,n}}\int_{-\deltamle\sqrt{n}}^{\deltamle\sqrt{n}}\frac{ e^{-t^2/(2\sigma_n^2)}}{\posterior(n^{-1/2} t+\mle)}\left|\int_{-\deltamle\sqrt{n}}^t h(u)\posterior(n^{-1/2} u+\mle)du\right|dt\\
		= &\frac{M_2}{2\sqrt{2\pi\sigma_n^2\,n}}\int_{-\deltamle\sqrt{n}}^{0}\frac{ t^2e^{-t^2/(2\sigma_n^2)}}{\posterior(n^{-1/2} t+\mle)}\left|\int_{-\deltamle\sqrt{n}}^t h(u)\posterior(n^{-1/2} u+\mle)du\right|dt\\
		&+\frac{M_2}{2\sqrt{2\pi\sigma_n^2\,n}}\int_0^{\deltamle\sqrt{n}}\frac{ t^2e^{-t^2/(2\sigma_n^2)}}{\posterior(n^{-1/2} t+\mle)}\left|\int_t^{\deltamle\sqrt{n}} h(u)\posterior(n^{-1/2} u+\mle)du\right|dt\\
		&+\frac{M_1}{\sqrt{2\pi\sigma_n^2\,n}}\int_{-\deltamle\sqrt{n}}^{0}\frac{ e^{-t^2/(2\sigma_n^2)}}{\posterior(n^{-1/2} t+\mle)}\left|\int_{-\deltamle\sqrt{n}}^t h(u)\posterior(n^{-1/2} u+\mle)du\right|dt\\
		&+\frac{M_1}{\sqrt{2\pi\sigma_n^2\,n}}\int_0^{\deltamle\sqrt{n}}\frac{ e^{-t^2/(2\sigma_n^2)}}{\posterior(n^{-1/2} t+\mle)}\left|\int_t^{\deltamle\sqrt{n}} h(u)\posterior(n^{-1/2} u+\mle)du\right|dt\\
		=:&\tilde{I}_{1,1}+\tilde{I}_{1,2}+\tilde{I}_{1,3}+\tilde{I}_{1,4}.
	\end{align*}
	 Now, note that, for some $c_1,c_2\in (0,1)$,
	\begin{align*}
		\tilde{I}_{1,1}\leq& \frac{M_2}{2\sqrt{2\pi\sigma_n^2\,n}}\int_{-\deltamle\sqrt{n}}^{0}\frac{ t^2e^{-t^2/(2\sigma_n^2)}}{\posterior(n^{-1/2} t+\mle)}\int_{-\deltamle\sqrt{n}}^t |h(u)|\posterior(n^{-1/2} u+\mle)\,du\,dt\\
		= & \frac{M_2}{2\sqrt{2\pi\sigma_n^2\,n}}\int_{-\deltamle\sqrt{n}}^{0}|h(u)|\int_{u}^0t^2e^{-t^2/(2\sigma_n^2)}\frac{\posterior(n^{-1/2} u+\mle)}{\posterior(n^{-1/2} t+\mle)}\,dt\, du\\
		\leq&\frac{\widetilde{M}_1\hat{M}_1M_2}{2\sqrt{2\pi\sigma_n^2\,n}}\int_{-\deltamle\sqrt{n}}^{0}|h(u)|\int_{u}^0t^2e^{-t^2/(2\sigma_n^2)}\exp\bigg[L_n(\mle)+\frac{u}{\sqrt{n}} L_n'(\mle) + \frac{u^2}{2n} L_n''(\mle) \\
		&\hspace{8cm}+ \frac{u^3}{6n^{3/2}}L_n'''(\mle+c_1n^{-1/2}u)\bigg]\\
		&\cdot\exp\bigg[-L_n(\mle)-\frac{t}{\sqrt{n}} L_n'(\mle) - \frac{t^2}{2n} L_n''(\mle)- \frac{t^3}{6n^{3/2}}L_n'''(\mle+c_2n^{-1/2}t)\bigg]dt\,du\\
		\leq &\frac{\widetilde{M}_1\hat{M}_1M_2}{2\sqrt{2\pi\sigma_n^2n}}\int_{-\deltamle\sqrt{n}}^{0}|h(u)|e^{-u^2/(2\sigma_n^2)}e^{\deltamle M_2 u^2/6}\int_{u}^0t^2e^{\deltamle M_2 t^2/6}\,dt\,du\\
		\leq &\frac{3\widetilde{M}_1\hat{M}_1}{\deltamle\sqrt{2\pi\sigma_n^2\,n}}\int_{-\deltamle\sqrt{n}}^{0}|uh(u)|\left(e^{-\left(\frac{1}{2\sigma_n^2}-\frac{\deltamle M_2}{3}\right)u^2}-e^{-\left(\frac{1}{2\sigma_n^2}-\frac{\deltamle M_2}{6}\right)u^2}\right)du.
	\end{align*}
	By a similar argument,
	\begin{align*}
		&\tilde{I}_{1,2}\leq \frac{3\widetilde{M}_1\hat{M}_1}{\deltamle\sqrt{2\pi\sigma_n^2\,n}}\int_0^{\deltamle\sqrt{n}}|uh(u)|\left(e^{-\left(\frac{1}{2\sigma_n^2}-\frac{\deltamle M_2}{3}\right)u^2}-e^{-\left(\frac{1}{2\sigma_n^2}-\frac{\deltamle M_2}{6}\right)u^2}\right)du.\\
		&\tilde{I}_{1,3}\leq \frac{\widetilde{M}_1\hat{M}_1M_1}{\sqrt{2\pi\sigma_n^2\,n}}\int_{-\deltamle\sqrt{n}}^{0}|uh(u)|e^{-\left(\frac{1}{2\sigma_n^2}-\frac{\deltamle M_2}{3}\right)u^2}du.\\
		&\tilde{I}_{1,4}\leq \frac{\widetilde{M}_1\hat{M}_1M_1}{\sqrt{2\pi\sigma_n^2\,n}}\int_0^{\deltamle\sqrt{n}}|uh(u)|e^{-\left(\frac{1}{2\sigma_n^2}-\frac{\deltamle M_2}{3}\right)u^2}du.
	\end{align*}
	Therefore,
	\begin{align}
		\tilde{I}_1\leq& \frac{2\widetilde{M}_1\hat{M}_1\left(M_1+\frac{3}{\deltamle}\right)}{\sqrt{2\pi\sigma_n^2\,n}}\int_{-\deltamle\sqrt{n}}^{\deltamle\sqrt{n}}|uh(u)|e^{-\left(\frac{1}{2\sigma_n^2}-\frac{\deltamle M_2}{3}\right)u^2}du\notag\\
		&-\frac{6\widetilde{M}_1\hat{M}_1}{\deltamle\sqrt{2\pi\sigma_n^2\,n}}\int_{-\deltamle\sqrt{n}}^{\deltamle\sqrt{n}}|uh(u)|e^{-\left(\frac{1}{2\sigma_n^2}-\frac{\deltamle M_2}{6}\right)u^2}du.\label{i1_univariate}
	\end{align}
	Now, by Taylor's expansion:
	\begin{align}
		\frac{n^{-1/2}}{C_n^{MLE}}\left|\int_{-\deltamle\sqrt{n}}^{\deltamle\sqrt{n}}g(t)\posterior(n^{-1/2} t+\mle)dt\right|\leq&\frac{\int_{-\deltamle\sqrt{n}}^{\deltamle\sqrt{n}}|g(t)|\posterior(n^{-1/2} t+\mle)dt}{\int_{-\deltamle\sqrt{n}}^{\deltamle\sqrt{n}}\posterior(n^{-1/2} u+\mle)du}\notag\\
		\leq& \widetilde{M}_1\hat{M}_1\frac{\int_{-\deltamle\sqrt{n}}^{\deltamle\sqrt{n}}|g(t)|e^{-(1/(2\sigma_n^2)-\deltamle M_2/6)t^2}dt}{\int_{-\deltamle\sqrt{n}}^{\deltamle\sqrt{n}}e^{-(1/(2\sigma_n^2)+\deltamle M_2/6)u^2}du}\notag\\
	&\hspace{-1cm}	\leq\widetilde{M}_1\hat{M}_1\frac{\sqrt{\frac{1}{2\sigma_n^2}+\frac{\deltamle M_2}{6}}\int_{-\deltamle\sqrt{n}}^{\deltamle\sqrt{n}}|g(t)|e^{-(1/(2\sigma_n^2)-\deltamle M_2/6)t^2}dt}{\sqrt{2\pi}\left(1-2e^{-\deltamle^2n(1/(2\sigma_n^2)+\deltamle M_2/6)}\right)}.\label{i11_univariate}
	\end{align}
\Cref{i1_univariate,i11_univariate}, together with a standard expression for the normal first absolute moment now yield that
	\begin{align}
		\tilde{I}_1\leq& \frac{2\widetilde{M}_1\hat{M}_1}{\sqrt{2\pi\sigma_n^2\,n}}\int_{-\deltamle\sqrt{n}}^{\deltamle\sqrt{n}}|ug(u)|\left[\left(M_1+\frac{3}{\deltamle}\right)e^{-\left(\frac{1}{2\sigma_n^2}-\frac{\deltamle M_2}{3}\right)u^2}-\frac{3}{\deltamle}e^{-\left(\frac{1}{2\sigma_n^2}-\frac{\deltamle M_2}{6}\right)u^2}\right]du\notag\\
		&+ \frac{2\sqrt{\frac{1}{2\sigma_n^2}+\frac{\deltamle M_2}{6}}\left(\widetilde{M}_1\hat{M}_1\right)^2\left(M_1+\frac{3}{\deltamle}\right)\int_{-\deltamle\sqrt{n}}^{\deltamle\sqrt{n}}|g(u)|e^{-(1/(2\sigma_n^2)-\deltamle M_2/6)u^2}du}{\left(\frac{1}{2\sigma_n^2}-\frac{\deltamle M_2}{3}\right)\pi\sqrt{\sigma_n^2}\left(1-2e^{-\deltamle^2n(1/(2\sigma_n^2)+\deltamle M_2/6)}\right)\sqrt{n}}\notag\\
		&\hspace{7cm}\cdot\left(\frac{M_1+\frac{3}{\deltamle}}{\frac{1}{2\sigma_n^2}-\frac{\deltamle M_2}{3}}-\frac{3}{\deltamle\left(\frac{1}{2\sigma_n^2}-\frac{\deltamle M_2}{6}\right)}\right).\label{i1_univariate_final}
	\end{align}

	\subsection{Conclusion} The final bound now follows from \cref{i2_univariate_final,i1_univariate_final}.

\section{More detail on the examples}\label{appendix_examples}

\subsection{More detail on \Cref{example_1}}\label{app_logistic_example}
Note that the function $\loglikelihood$ is concave, so its global maximum (i.e. the MLE) $\mle$ exists as long as the data are not linearly separable. The log-posterior is given by
\begin{align*}
	\logposterior(\param)=-\sum_{i=1}^n\rho\left(Y_iX_i^T\param\right)-\frac{\|\param\|^2}{2}-\frac{d}{2}\log(2\pi).
\end{align*}
Note that $\logposterior$ is also concave so its global maximum (i.e. the MAP) $\map$ exists as long as the data are not linearly separable.
\sloppy Note also that $\loglikelihood''(\param)=-\sum_{i=1}^n\rho''(Y_iX_i^T\param)X_iX_i^T$ and $\left(\log \pi\right)''(\param)=-I_{d\times d}$.

% We will use the following result from \cite{miller_kenneth}:
% \begin{lemma}[Lemma from {\cite[page 61]{miller_kenneth}}]\label{lemma_miller}
%     Let $G$ and $G+E$ be nonsingular matrices where $E$ is a matrix of rank one. Let $g=\Tr[EG^{-1}]$. Then
%     \begin{align*}
%         (G+E)^{-1}=G^{-1}-\frac{1}{1+g}G^{-1}EG^{-1}.
%     \end{align*}
% \end{lemma}
% It follows from \Cref{lemma_miller} that
% \begin{align*}
%     d_{eff}:=\Tr{\loglikelihood''(\map)\left(\loglikelihood''(\map)+\left(\log \pi\right)''(\map)\right)^{-1}}=&\Tr[I_{d\times d}]-\frac{1}{1+\Tr[\loglikelihood''(\map)^{-1}]}\Tr[\loglikelihood''(\map)^{-1}]\\
%     =&d-1+\frac{1}{1+\Tr[\loglikelihood''(\map)^{-1}]}>d-1.
% \end{align*}

\subsubsection{Deriving the order of our total variation distance bound}\label{example_bound}
Consider the following two results:
\begin{lemma}[cf. {\citealt[Lemma 4]{sur}}]\label{lemma_bound_eigenvalue}
	Suppose that $d/n<1$. Let $H(\epsilon)=-\epsilon\log \epsilon - (1-\epsilon)\log(1-\epsilon).$ Then there exists a constant $\epsilon_0$ such that for all $0\leq \epsilon\leq \epsilon_0$, with probability at least $1-2\exp(-nH(\epsilon))-2\exp(-n/2)$, the following matrix inequality
	\begin{align*}
		-\frac{\loglikelihood''(\param)}{n}\succeq \frac{\exp(3\|\param\|/\sqrt{\epsilon})}{\left(1+\exp(3\|\param\|/\sqrt{\epsilon})\right)^2}\left(\sqrt{1-\epsilon} - \frac{d}{n}-2\sqrt{\frac{H(\epsilon)}{1-\epsilon}}\right)I_{d\times d}
	\end{align*}
	holds simultaneously for all $\param\in\mathbb{R}^d$.
\end{lemma}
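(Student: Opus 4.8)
The plan is to reduce the claim to a uniform lower bound on the minimum eigenvalue of a suitably restricted Gram matrix. Since $Y_i\in\{-1,1\}$ and $\rho''$ is even, we have $\loglikelihood''(\param)=-\sum_{i=1}^n\rho''(X_i^T\param)X_iX_i^T$, so that $-\loglikelihood''(\param)/n=\frac1n\sum_{i=1}^n\rho''(X_i^T\param)X_iX_i^T$. The function $\rho''(t)=e^t/(1+e^t)^2$ is even and strictly decreasing in $|t|$, hence for any threshold $T\geq0$ the pointwise inequality $\rho''(X_i^T\param)\geq\rho''(T)\mathbbm{1}[|X_i^T\param|\leq T]$ holds. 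Taking $T=3\|\param\|/\sqrt{\epsilon}$, which reproduces the prefactor $\rho''(T)=\exp(3\|\param\|/\sqrt\epsilon)/(1+\exp(3\|\param\|/\sqrt\epsilon))^2$ in the statement, I would obtain
\[
-\frac{\loglikelihood''(\param)}{n}\succeq\rho''\!\left(\frac{3\|\param\|}{\sqrt\epsilon}\right)\cdot\frac1n\sum_{i\in G(\param)}X_iX_i^T,\qquad G(\param):=\{i:|X_i^T\param|\leq T\}.
\]
Because the event $\{|X_i^T\param|\leq T\}$ depends on $\param$ only through its direction $\param/\|\param\|$, the good set $G(\param)$ is determined by a unit vector, and the monotonicity of $\lambda_{\min}$ under enlarging the index set reduces everything to controlling $\min_{|G|\geq(1-\epsilon)n}\lambda_{\min}\!\left(n^{-1}\sum_{i\in G}X_iX_i^T\right)$, provided $|G(\param)|\geq(1-\epsilon)n$.

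The proof then rests on two high-probability events. First, I would show that, uniformly over all unit vectors $u$, the number of ``bad'' indices $\#\{i:|X_i^Tu|>3/\sqrt\epsilon\}$ is at most $\epsilon n$; this guarantees $|G(\param)|\geq(1-\epsilon)n$ for every $\param$. Since $X_i^Tu\sim\mathcal N(0,1)$, the per-direction probability of a bad index is $2\bar\Phi(3/\sqrt\epsilon)$, which for small $\epsilon$ lies far below $\epsilon$, so a Chernoff bound controls the count for a fixed $u$ with failure probability exponentially small in $n$; a covering-net argument over the sphere (of cardinality $e^{O(d)}$, absorbed because $d<n$) upgrades this to a uniform statement and contributes the $2\exp(-n/2)$ term. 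Second, I would bound the restricted smallest eigenvalue: for each fixed $G$ with $|G|=(1-\epsilon)n$, the matrix $X_G$ with rows $\{X_i\}_{i\in G}$ is Gaussian, so its least singular value concentrates via the Gordon/Davidson--Szarek estimate, and a union bound over the $\binom{n}{\epsilon n}\leq e^{nH(\epsilon)}$ choices of $G$, together with a net over the sphere, yields after simplification a lower bound of the form $\sqrt{1-\epsilon}-d/n-2\sqrt{H(\epsilon)/(1-\epsilon)}$ with failure probability $2\exp(-nH(\epsilon))$. The constant $\epsilon_0$ is chosen small enough that both estimates are valid and the claimed prefactors are attained. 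Combining the two events gives the displayed matrix inequality simultaneously for all $\param$.

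The main obstacle is the simultaneity over all $\param\in\mathbbm{R}^d$: the good set $G(\param)$ is data- and direction-dependent, so the restricted-eigenvalue bound must hold for every admissible subset at once, which is exactly what forces the entropic union bound over $\binom{n}{\epsilon n}$ subsets and produces the $H(\epsilon)$ correction. A secondary technical nuisance is that the bad-index count is an integer-valued, discontinuous function of the direction $u$, so passing from a finite net to all unit vectors requires a small inflation of the threshold together with the Lipschitz control $|X_i^Tu-X_i^Tu'|\leq\|X_i\|\,\|u-u'\|$; tracking how the threshold $3/\sqrt\epsilon$ and the entropy rate $H(\epsilon)$ trade off against the Gaussian tail and the singular-value fluctuations is where the balancing of constants, and hence the choice of $\epsilon_0$, is delicate.
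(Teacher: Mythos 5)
Your reduction (evenness and monotonicity of $\rho''$, giving $-\loglikelihood''(\param)/n \succeq \rho''(3\|\param\|/\sqrt{\epsilon})\cdot n^{-1}\sum_{i \in G(\param)} X_i X_i^T$) is exactly the observation the paper uses; note, however, that the paper's own ``proof'' stops there and simply cites \cite[Lemma 4]{sur} for the probabilistic content, whereas you attempt to reconstruct that content from scratch. Your second high-probability event is handled correctly: since the random set $G(\param)$ always contains some fixed subset of size $\lceil (1-\epsilon)n\rceil$, a union bound over all $\binom{n}{\epsilon n}\leq e^{nH(\epsilon)}$ fixed subsets combined with Davidson--Szarek for each fixed (hence genuinely Gaussian) submatrix is sound and yields the $H(\epsilon)$ correction and the $2\exp(-nH(\epsilon))$ term.

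The genuine gap is in your first event, the uniform bound $\sup_{\|u\|=1}\#\{i:|X_i^Tu|>3/\sqrt{\epsilon}\}\leq \epsilon n$. Your plan is a per-direction Chernoff bound plus a net ``of cardinality $e^{O(d)}$, absorbed because $d<n$.'' That cardinality claim is where the argument breaks. Because the count is an indicator-based, discontinuous functional, passing from net points to arbitrary $u$ requires the threshold inflation $|X_i^T(u-u')|\leq \|X_i\|\,\|u-u'\|$ with $\|X_i\|\asymp\sqrt{d}$, so the mesh must be of order $1/\sqrt{d}$ and the net has cardinality $e^{\Theta(d\log d)}$, not $e^{O(d)}$. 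Meanwhile the per-direction Chernoff exponent is an absolute constant (the relevant relative entropy $D(\epsilon\,\|\,2\bar\Phi(3/\sqrt{\epsilon}))$ tends to $9/2$ as $\epsilon\to 0$, not to $\infty$), so the union bound over the net costs $e^{Cd\log d - cn}$, which diverges whenever $d$ is proportional to $n$ --- a regime the lemma must cover, since the only hypothesis is $d/n<1$. A VC-type uniform deviation bound fails for the same reason: the deviation $\sqrt{d/n}$ need not be small compared with $\epsilon$. The argument that actually works (and is the one behind \cite[Lemma 4]{sur}) is deterministic counting via Markov's inequality: for every unit vector $u$,
\begin{align*}
\#\left\{i:|X_i^Tu|>3/\sqrt{\epsilon}\right\}\;\leq\;\frac{\epsilon}{9}\sum_{i=1}^n (X_i^Tu)^2\;=\;\frac{\epsilon}{9}\,\|Xu\|^2\;\leq\;\frac{\epsilon}{9}\,\|X\|_{op}^2,
\end{align*}
where $X$ is the $n\times d$ design matrix. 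On the single event $\|X\|_{op}\leq \sqrt{n}+\sqrt{d}+\sqrt{n}\leq 3\sqrt{n}$ (probability at least $1-2e^{-n/2}$ by Gaussian concentration of the largest singular value, using $d\leq n$), the count is at most $\epsilon n$ \emph{simultaneously for all directions}, with no net at all; this is also precisely where the constant $3$ in the threshold $3\|\param\|/\sqrt{\epsilon}$ and the $2\exp(-n/2)$ term in the probability come from. Replacing your net argument by this counting step repairs the proof.
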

\begin{proof}
	This follows directly from \citet[Lemma 4]{sur}, using the fact that
	\begin{align*}
		\inf_{z:|z|\leq \frac{3\|\param\|}{\sqrt{\epsilon}}} \rho''(z) = \frac{\exp(3\|\param\|/\sqrt{\epsilon})}{\left(1+\exp(3\|\param\|/\sqrt{\epsilon})\right)^2}.
	\end{align*}
\end{proof}
\begin{lemma}[cf. {\citealt[Corollary 6.4]{katsevich_bvm}}]\label{lemma_bounded_mle}
	Fix any small constant $\epsilon>0$. Then, there exist universal constants $c_1,c_2,C_2>0$, independent of $n$ or $d$ such that if $d^{3/2}\leq n$ then, for large enough $n$, the MLE $\mle$ associated to $\loglikelihood$ satisfies
	\begin{align*}
		\|\mle\|<c_1,
	\end{align*}
	with probability at least $1-\exp(-c_2(nd)^{1/8})-e^{-C_2n^{1/6}}$.
\end{lemma}
\begin{proof}
    This follows directly from \citet[Corollary 6.4]{katsevich_bvm} upon choosing $s= c_3n^{1/6}$ in their setup, for a sufficiently small absolute constant $c_3>0$ not depending on $n$ or $d$.
\end{proof}
% \begin{lemma}[{\citealt[Theorem 4b]{sur}}]\label{lemma_bounded_mle}
% 	Fix any small constant $\epsilon>0$. Then, there exist universal constants $c_1,c_2,C_2>0$, such that if $\frac{d}{n}<\frac{1}{2}-c_1\epsilon^{3/4}$ then the MLE $\mle$ associated to $\loglikelihood$ satisfies
% 	\begin{align*}
% 		\|\mle\|<\frac{4\log 2}{\epsilon^2}
% 	\end{align*}
% 	with probability at least $1-C_2\exp(-c_2\epsilon^2n)$.
% \end{lemma}
Note that functions $\frac{\loglikelihood}{n}$ and $\frac{\logposterior}{n}$ are continuously differentiable and strictly concave and converge almost surely to the same (concave) limit as $n\to\infty$. Therefore, if $\|\mle\|$ is uniformly  bounded, then so is $\|\map\|$. Therefore, by Lemma \ref{lemma_bounded_mle}, if $d^{3/2}\leq n$ then $\|\map\|$ is upper-bounded, uniformly in $n$ and $d$, with high probability. As $\logposterior''(\param)=\loglikelihood''(\param)-I_{d\times d}$, we can now apply Lemma \ref{lemma_bound_eigenvalue} to conclude that, when $d^{3/2}\leq n$, then $\minevaluemap$ is lower bounded uniformly in $d$ and $n$, with high probability.

Now, consider the following result, which is a direct consequence of \citet[Lemma 3.2]{katsevich}:
\begin{lemma}[cf. {\citealt[Lemma 3.2]{katsevich}}]\label{lemma_bounded_m2}
    Suppose that $d\leq n\leq e^{\sqrt{d}}$. Then, there exist absolute constants $B_1,B_2,C>0$, such that
    \begin{align*}
        \sup_{\param\in\mathbb{R}^d}\left\|\frac{\logposterior'''(\param)}{n}\right\|\leq C\left(1+\frac{d^{3/2}}{n}\right);\qquad \sup_{\param\in\mathbb{R}^d}\left\|\frac{\loglikelihood'''(\param)}{n}\right\|\leq C\left(1+\frac{d^{3/2}}{n}\right)
    \end{align*}
    with probability at least $1-B_1\exp(-B_2\sqrt{nd}/\log(2n/d))$.
\end{lemma}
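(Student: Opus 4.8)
The plan is to exploit the very special structure of this example: the prior is the standard Gaussian $\prior(\param)\propto\exp(-\|\param\|^2/2)$, whose logarithm is a \emph{quadratic} function of $\param$. Hence $(\log\prior)'''\equiv 0$, and therefore $\logposterior'''(\param)=\loglikelihood'''(\param)$ for every $\param\in\mathbbm{R}^d$. The two claimed inequalities are thus literally the same statement, and it suffices to bound $\sup_{\param}\|\loglikelihood'''(\param)\|^{*}/n$, where $\|\cdot\|^{*}$ is the operator (injective) norm of the $3$-form introduced in \Cref{section_notation}.

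Next I would write the third derivative explicitly. Setting $z_i:=Y_iX_i^T\param$ and recalling $\rho=-\log s$, differentiation gives the cubic form $\loglikelihood'''(\param)[u,v,w]=-\sum_{i=1}^n\rho'''(z_i)\,Y_i\,(X_i^Tu)(X_i^Tv)(X_i^Tw)$, where I have used $Y_i^{3}=Y_i$ since $Y_i\in\{-1,1\}$. Because $\rho'(t)=s(t)-1$, one has $\rho''=s'$ and $\rho'''=s''$, so the coefficients $\rho'''(z_i)$ are bounded in absolute value by the absolute constant $c_\rho:=\sup_t|s''(t)|$, uniformly in $\param$ and in the data. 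Combining this with the definition of $\|\cdot\|^{*}$, the AM--GM inequality $|X_i^Tu||X_i^Tv||X_i^Tw|\le\tfrac13(|X_i^Tu|^3+|X_i^Tv|^3+|X_i^Tw|^3)$, and the homogeneity of the cubic (so that the supremum over the unit ball is attained on the unit sphere), I obtain the uniform reduction
\begin{align*}
\sup_{\param\in\mathbbm{R}^d}\frac{\|\loglikelihood'''(\param)\|^{*}}{n}\le c_\rho\,\sup_{\|u\|=1}\frac1n\sum_{i=1}^n|X_i^Tu|^3 .
\end{align*}

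The remaining task is to control the empirical cubic process $\sup_{\|u\|=1}\frac1n\sum_i|X_i^Tu|^3$ for i.i.d. standard Gaussian covariates $X_i$. Its mean is the absolute constant $\mathbbm{E}|N(0,1)|^3$, which accounts for the ``$1$'' in the stated bound, while the supremum over the unit sphere of the centred process is what contributes the $d^{3/2}/n$ term. This is exactly the quantity estimated by \cite[Lemma 3.2]{katsevich} in precisely this logistic-regression-with-Gaussian-prior model: under the assumption $d\le n\le e^{\sqrt d}$, with probability at least $1-B_1\exp(-B_2\sqrt{nd}/\log(2n/d))$ one has $\sup_{\|u\|=1}\frac1n\sum_i|X_i^Tu|^3\le C'(1+d^{3/2}/n)$. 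Absorbing $c_\rho C'$ into $C$ yields the bound for $\loglikelihood'''$, and by the first paragraph the identical bound holds for $\logposterior'''$ on the same high-probability event.

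The only genuinely delicate point, and the step I would be most careful about, is reconciling conventions with \cite{katsevich}: their Lemma 3.2 is phrased for a (normalised) third-derivative tensor of the log-posterior rather than for the diagonal form $\sum_i|X_i^Tu|^3$, and possibly in a different tensor norm. I would handle this by noting that all norms of a symmetric $3$-form on $\mathbbm{R}^d$ occurring here are equivalent up to absolute constants, and that (as above) the trilinear supremum defining $\|\cdot\|^{*}$ is controlled by the diagonal supremum up to a universal factor; since these constants are harmless they are absorbed into $C$, and the probability statement is inherited verbatim. Everything beyond this bookkeeping is a direct quotation of \cite[Lemma 3.2]{katsevich}.
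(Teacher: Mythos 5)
Your proposal is correct and takes essentially the same route as the paper: the paper offers no independent proof of \Cref{lemma_bounded_m2}, presenting it as a direct consequence of \cite[Lemma 3.2]{katsevich}, which is exactly the external ingredient you invoke. Your additional steps — observing that the Gaussian log-prior is quadratic so $\logposterior'''=\loglikelihood'''$, bounding $|\rho'''|$ by an absolute constant, and using AM--GM to reduce the trilinear supremum to the empirical cubic process $\sup_{\|u\|=1}\frac{1}{n}\sum_{i=1}^n|X_i^Tu|^3$ — are a correct and explicit fleshing-out of why that citation suffices.
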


As discussed in \Cref{dimension_dependence} and in \Cref{app_logistic_example_assumptions} below, as long as $\frac{d\log n}{n}\xrightarrow{n\to\infty}0$, the leading term in in our bound on the total variation distance from \Cref{main_map} is of order $C_d\sqrt{d^2/n}$, where $C_d\leq \frac{\overline{M}_2}{\minevaluemap\sqrt{\minevaluemap-\deltabar\overline{M}_2}}$. As described above, we have established via Lemmas \ref{lemma_bound_eigenvalue} and \ref{lemma_bounded_mle} that $\minevaluemap$ is lower-bounded with high probability if $n\geq d^{3/2}$. We have also established via Lemma \ref{lemma_bounded_m2} that $\overline{M}_2$ is upper-bounded with high probability. Moreover, we can choose $\deltabar>0$ to be independent of $n$ and $d$ and arbitrarily small. Therefore $C_d$ is with high probability upper-bounded by a finite constant not depending on $d$ or $n$.
It follows that the leading order term in our bound on the total variation distance from \Cref{main_map} is with high probability upper bounded by a universal constant multiplied by $\sqrt{d^2/n}$. 

\subsubsection{Lower bound on the effective dimension}\label{lower_bound_eff_dim}
Now, we shall show that, in the setup of \Cref{example_1},
\begin{align*}
d_{eff}:=&\Tr{\left(\jbar+\frac{(\log\pi)''(\map)}{n}\right)\jbar^{-1}}\geq d\left(1- \frac{1}{n \,\minevaluemap}\right),
    \end{align*}
where $d_{eff}$ denotes the \textit{effective dimension} introduced in \citet{spokoiny_laplace}. Note that
\begin{align*}
    d_{eff}=&\Tr{\left(\jbar-\frac{1}{n}I_{d\times d}\right)\jbar^{-1}}\\
    =&\Tr{I_{d\times d}-\frac{1}{n}\jbar^{-1}}\\
    =& d-\frac{1}{n}\Tr{\jbar^{-1}}\\
    \geq & d\left(1- \frac{1}{n \,\minevaluemap}\right).
\end{align*}
The fact that $\minevaluemap$ is lower-bounded by a positive number not depending on $n$ of $d$ with high probability follows from Lemmas \ref{lemma_bound_eigenvalue} and \ref{lemma_bounded_mle} presented above and is discussed directly below Lemma \ref{lemma_bounded_mle} in \Cref{example_bound} above.

\subsubsection{Checking that the model satisfies the assumptions of \Cref{main_map}}\label{app_logistic_example_assumptions}
We retain the assumption that $2d<n < e^{\sqrt{d}}$ and assume that $\frac{d \log n}{n}\xrightarrow{n\to\infty}0$. As discussed above, the MLE $\mle$ and MAP $\map$ exist and are bounded with high probability by Lemma \ref{lemma_bounded_mle}. \Cref{assump1} is satisfied with high probability, (for instance with $\delta=1$), with $M_2$ not depending on $n$ or $d$, by Lemma \ref{lemma_bounded_m2}. The Gaussian prior trivially satisfies \Cref{assump_prior1} for any $\deltamle\leq 1$. Moreover, $\hat{M}_1$ is with high probability bounded by a constant multiplied by $(2\pi)^{d/2}$, as $\|\mle\|$ is upper bounded by a constant independent of $n$ and $d$ with high probability. By Lemma \ref{lemma_bounded_m2}, with high probability \Cref{assump2} is also satisfied for any fixed $\deltabar$ not depending on $n$ or $d$ and with $\overline{M}_2$ not depending on $n$ or $d$.

As discussed above, $\frac{\logposterior}{n}$ and $\frac{\loglikelihood}{n}$ are continuously differentiable and concave and converge almost surely to the same (concave) limit as $n\to\infty$. By Lemma \ref{lemma_bounded_mle}, $\mle$ and $\map$ are bounded with high probability. Therefore, with high probability, $\|\map-\mle\|<\deltabar$, for any fixed $\deltabar>0$ and for large enough $n$. Moreover, with high probability, $\minevaluemap$ and $\minevaluemle$ are lower bounded by a positive number not depending on $n$ or $d$, by Lemmas \ref{lemma_bound_eigenvalue} and \ref{lemma_bounded_mle}. Moreover $\frac{\Tr\left[\postfisherinformation(\map)^{-1}\right]}{n}\leq \frac{d}{n\,\minevaluemap}$ and $\frac{\Tr\left[\fisherinformation(\mle)^{-1}\right]}{n}\leq \frac{d}{n\,\minevaluemle}$. Therefore, with high probability, \Cref{assump_size_of_delta_bar} is satisfied, for large enough $n$ and for any fixed $\deltamle$ and $\deltabar$, not depending on $n$ or $d$.

 Recall that $\minevaluemap$ is lower bounded by a positive number not depending on $n$ or $d$ with high probability, if $d<\frac{n}{2}$. Therefore, \Cref{assump7} is satisfied for small enough $\deltabar$.  Assume $n$ is large enough so that $\deltabar-\|\mle-\map\|>0$. \Cref{assump_kappa1} is satisfied with high probability by the following reasoning, which uses the strict concavity of $\loglikelihood$:
 	\begin{align*}
 	&\sup_{\param:\|\param-\mle\|>\deltabar-\|\mle-\map\|}\frac{L_n(\param)-L_n(\mle)}{n}\leq\sup_{\param:\|\param-\mle\|=\deltabar-\|\mle-\map\|}\frac{L_n(\param)-L_n(\mle)}{n}\\
 		\leq
 	&\sup_{\param:\|\param-\mle\|=\deltabar-\|\mle-\map\|}\left\{-\frac{1}{2}\left(\param-\mle\right)^T\fisherinformation(\mle)\left(\param-\mle\right)\right\}+\frac{M_2\left(\deltabar-\|\mle-\map\|\right)^3}{6}\\
 	\leq& -\frac{1}{2}\minevaluemle\left(\deltabar-\|\mle-\map\|\right)^2+\frac{M_2\left(\deltabar-\|\mle-\map\|\right)^3}{6}=:-\kappabar.
 \end{align*}
With high probability, $\kappabar$ is lower bounded by a positive number not depending on $n$ or $d$, for any fixed $\deltabar$ independent of $n$ and $d$, as long as $n$ is large enough.

Now, we have shown that we can make choices of $\deltabar$ and $\kappabar$ independent of $n$ and $d$, such that with high probability the assumptions of \Cref{main_map} hold for large enough $n$. For such a choice of $\deltabar$ we have that $\deltabar\gg \frac{\sqrt{\log n}}{\sqrt{n\minevaluemap}}$. By our assumption $\frac{d \log n}{n}\xrightarrow{n\to\infty}0$, we also have that $\kappabar\gg \frac{\log n}{n}\cdot \frac{d+1}{2}$. Moreover, we have established that $\hat{M}_1$ is with high probability bounded by a universal constant multiplied by $(2\pi)^{d/2}$. Also, note that
\begin{align*}
	\det{\jhatplus}=\det{\frac{\sum_{i=1}^n\rho''(Y_iX_i^T\mle)X_iX_i^T}{n}+(\delta M_2/3)I_{d\times d}}
\end{align*}
Note that
\begin{align*}
	\left\|\frac{\sum_{i=1}^n\rho''(Y_iX_i^T\mle)X_iX_i^T}{n}\right\|_{op}
	\leq& \frac{1}{4n}\sum_{i=1}^n\left\|X_iX_i^T\right\|_{op}
	\leq\frac{1}{4n}\sum_{i=1}^n\left\|X_i\right\|^2.
\end{align*}
Therefore,
\begin{align*}
	\log \left[\det{\jhatplus}^{1/2}\right]\leq \frac{d}{2}\log\left[\frac{1}{4n}\sum_{i=1}^n\left\|X_i\right\|^2+\delta M_2/3\right].
\end{align*}
It follows that $\kappabar\gg \frac{1}{n}\log\left(\hat{M}_1\det{\jhatplus}^{1/2}\right)$ with high probability.
Hence, by the discussion of \Cref{dimension_dependence}, with high probability, the bound in \Cref{main_map} is of the same order as that of the first summand $A_1n^{-1/2}$.

	\subsection{Calculations for \Cref{poisson_example}}\label{calculations_poisson}
	\subsubsection{The MLE-centric approach}
Let $X_1,\dots,X_n\geq 0$ be our data and assume that their sum is positive. We have
\[
\loglikelihood(\param)=-n\param + (\log\param)\left(\sum_{i=1}^n X_i\right)-\sum_{i=1}^n\log\left(X_i!\right)
\]
The MLE is given by $\mle=\overline{X}_n:=\frac{1}{n}\sum_{i=1}^n X_i$.  Also,
\[
\loglikelihood'(\param)=-n+\frac{n\overline{X}_n}{\param},\quad \loglikelihood''(\param)=-\frac{n\overline{X}_n}{\param^2},\quad \loglikelihood'''(\param)=\frac{2n\overline{X}_n}{\param^3}.
\]
We have $\sigma_n^2:=\fisherinformation(\mle)^{-1}=\mle^2/\left.\overline{X}_n\right.=\left|\overline{X}_n\right|$.

Now, for $c\in(0,1)$ and $\deltamle=c\mle$, we have that for $\param\in(\mle-\deltamle,\mle+\deltamle)=\left(\overline{X}_n-c\overline{X}_n,\overline{X}_n+c\overline{X}_n\right)$,
\[
\frac{\left|L_n'''(\param)\right|}{n}=\frac{2\overline{X}_n}{\left|\param\right|^3}\leq \frac{2}{(1-c)^3\left(\overline{X}_n\right)^2}=:M_2
\]
Moreover, for $\theta$, such that $\left|\param-\mle\right|>\deltamle=c\overline{X}_n$, i.e. for $\param>\overline{X}_n+c\overline{X}_n$ or $\param<\overline{X}_n-c\overline{X}_n$,
\begin{align*}
	\frac{\loglikelihood(\param)-\loglikelihood(\mle)}{n}\leq &\max\left\{ \frac{\loglikelihood((1+c)\overline{X}_n)-\loglikelihood(\overline{X}_n)}{n},   \frac{\loglikelihood((1-c)\overline{X}_n)-\loglikelihood(\overline{X}_n)}{n}  \right\}\\
	\leq & \overline{X}_n\cdot\max\left\{ \log(1+c)-c, \log(1-c)+c \right\}=\left[\log(1+c)-c\right]\overline{X}_n=:-\kappamle.
\end{align*}
We also need to make sure that
$\fisherinformation(\mle)>\deltamle M_2$, i.e. that
$\frac{1}{\overline{X}_n}>\frac{2 c}{(1-c)^3\overline{X}_n}$, which is true for all $0<c\leq 0.229$.

Now, the gamma prior with shape $\alpha$ and rate $\beta$ satisfies
$$\prior'(\param)=\frac{\beta^{\alpha}}{\Gamma(\alpha)}\left[(\alpha-1)\param^{\alpha-2}e^{-\beta\param}-\beta\param^{\alpha-1}e^{-\beta\param}\right].$$ Suppose that $\alpha<1$, then
\begin{align*}
	&\sup_{\param\in((1-c)\overline{X}_n, (1+c)\overline{X}_n)}|\prior'(\param)|\leq \frac{\beta^{\alpha}}{\Gamma(\alpha)}e^{-\beta(1-c)\overline{X}_n}\overline{X}_n^{\alpha-2}(1-c)^{\alpha-2}\left[ (1-\alpha)+\beta(1-c)\overline{X}_n\right]\\
	&\sup_{\theta\in((1-c)\overline{X}_n, (1+c)\overline{X}_n)}|\pi(\theta)|\leq \frac{\beta^{\alpha}}{\Gamma(\alpha)}(1-c)^{\alpha-1}\overline{X}_n^{\alpha-1}e^{-\beta(1-c)\overline{X}_n}\\
	&\sup_{\param\in((1-c)\overline{X}_n, (1+c)\overline{X}_n)}\frac{1}{|\prior(\theta)|}\leq \frac{\Gamma(\alpha)}{\beta^{\alpha}}(1+c)^{1-\alpha}\overline{X}_n^{1-\alpha}e^{\beta(1+c)\overline{X}_n}.
\end{align*}
Finally, the bounds in the MLE-centric approach are computed under the assumption $\sqrt{\frac{\Tr\left[\fisherinformation(\mle)^{-1}\right]}{n}}<\deltamle$. In our case, it says
    $\sqrt{\frac{\overline{X}_n}{n}}<c\overline{X}_n$, i.e. that $c>\frac{1}{\sqrt{n\overline{X}_n}}$.
    Therefore, assuming $\frac{1}{\sqrt{n\overline{X}_n}}<0.299$, letting $c\in\left(\frac{1}{\sqrt{n\overline{X}_n}},0.229\right]$, and assuming the shape $\alpha$ of the gamma prior is smaller than one, we can set
\begin{enumerate}[a)]
	\item $\deltamle=c\overline{X}_n$
	\item $M_1=(1-c)^{\alpha-2}(1+c)^{1-\alpha}\overline{X}_n^{-1}e^{2\beta c\overline{X}_n}\left[(1-\alpha)+\beta(1-c)\overline{X}_n\right]$
	\item $\widetilde{M}_1=\frac{\beta^{\alpha}}{\Gamma(\alpha)}(1-c)^{\alpha-1}\overline{X}_n^{\alpha-1}e^{-\beta(1-c)\overline{X}_n}$
	\item $\hat{M}_1= \frac{\Gamma(\alpha)}{\beta^{\alpha}}(1+c)^{1-\alpha}\overline{X}_n^{1-\alpha}e^{\beta(1+c)\overline{X}_n}$
	\item $M_2=\frac{2}{(1-c)^3(\overline{X}_n)^2}$
	\item $\fisherinformation(\mle)=\overline{X}_n^{-1}$
	\item $\kappamle=\left[c-\log(1+c)\right]\overline{X}_n$.
\end{enumerate}
% In particular, for $c=\frac{1}{2}$, we obtain:
% \begin{enumerate}[a)]
% 	\item $\deltamle=\frac{1}{2}\bar{X}_n$
% 	\item $M_1=3^{1-\alpha}\bar{X}_n^{-1}e^{\beta\bar{X}_n}\left[2-2\alpha+\beta \bar{X}_n\right]$
% 	\item $\widetilde{M}_1=\frac{\beta^{\alpha}}{\Gamma(\alpha)}2^{1-\alpha}\bar{X}_n^{\alpha-1}e^{-\beta\bar{X}_n/2}$
% 	\item $\hat{M}_1= \frac{\Gamma(\alpha)}{\beta^{\alpha}}\left(\frac{3}{2}\right)^{1-\alpha}\bar{X}_n^{1-\alpha}e^{3\beta\bar{X}_n/2}$
% 	\item $M_2=\frac{16}{(\bar{X}_n)^2}$
% 	\item $I(\hat{\theta}_n)=\bar{X}_n^{-1}$
% 	\item $\kappamle=\left[\frac{1}{2}-\log(3/2)\right]\bar{X}_n$
% 	\item $\epsilon=1$.
% \end{enumerate}

The concrete choice of $c$ may be optimized numerically.

\subsubsection{The MAP-centric approach}\label{map_poisson}
Let us still assume $\alpha<1$. We note that
\begin{align*}
	&\logposterior(\param)=-n\param+n(\log\param)\overline{X}_n-\sum_{i=1}^n\log(X_i!)+\alpha\log(\beta)-\log(\Gamma(\alpha))+(\alpha-1)\log\param-\beta\param;\\
	&\logposterior'(\theta)=-n+\frac{n\overline{X}_n}{\theta}+\frac{(\alpha-1)}{\theta}-\beta=0\quad\text{iff}\quad \theta=\map:=\frac{n\overline{X}_n+(\alpha-1)}{(n+\beta)};\\
	&\logposterior''(\theta)=-\frac{n\overline{X}_n+\alpha-1}{\theta^2}\quad\text{and so}\quad \postfisherinformation(\map)=\frac{(n+\beta)^2}{n(n\overline{X}_n+\alpha-1)};\\
&\logposterior'''(\param)=2\frac{n\overline{X}_n+\alpha-1}{\theta^3}
\end{align*}
and so for $\bar{c}\in(0,1), \deltabar=\bar{c}\map
\text{ and }\param\in(\map-\deltabar,\map+\deltabar)=\left((1-\bar{c})\map,(1+\bar{c})\map\right)$, we have that
$$\frac{1}{n}|\logposterior'''(\theta)|\leq \frac{2(n+\beta)^3}{n(n\overline{X}_n+\alpha-1)^2(1-\bar{c})^3}=:\overline{M}_2.$$

Now, we require that $\postfisherinformation(\map)>\deltabar\,\overline{M}_2$, i.e. that
\begin{align*}
	\frac{(n+\beta)^2}{n(n\overline{X}_n+\alpha-1)}>\frac{2\bar{c}(n+\beta)^2}{n(n\overline{X}_n+\alpha-1)(1-\bar{c})^3},\quad\text{which holds for }\bar{c}\in(0,0.229],\text{ if }n\overline{X}_n>1-\alpha.
\end{align*}
We will also want to make sure that $\deltabar=\bar{c}\frac{n\overline{X}_n+(\alpha-1)}{n+\beta}\geq \|\map-\mle\|=\frac{\beta\overline{X}_n+1-\alpha}{n+\beta}$. Assuming that $n\overline{X}_n>1-\alpha$, this translates to $\bar{c}\geq \frac{1}{n}\cdot\frac{\beta\overline{X}_n+1-\alpha}{\overline{X}_n+(\alpha-1)/n}$. Moreover, we require that $\deltabar=\bar{c}\frac{n\overline{X}_n+(\alpha-1)}{n+\beta}> \sqrt{\frac{1}{n\postfisherinformation(\map)}}=\sqrt{\frac{n\overline{X}_n+\alpha-1}{(n+\beta)^2}}$, which is equivalent to saying that $\bar{c}>\sqrt{\frac{1}{n\overline{X}_n+\alpha-1}}$.

Finally, the value of $\kappabar$ may be obtained in the following way:
\begin{align*}
    \kappabar:=&-\max\bigg\{-\deltabar+\|\map-\mle\|+\mle\log\left(\frac{\mle+\deltabar-\|\map-\mle\|}{\mle}\right),\\
    &\hspace{3cm}\deltabar-\|\map-\mle\|+\mle\log\left(\frac{\mle-\deltabar+\|\map-\mle\|}{\mle}\right) \bigg\}\\
    =&-\overline{X}_n\bigg\{\frac{\beta-\bar{c}n+(1-\alpha)(1+\bar{c})/\overline{X}_n}{n+\beta}+\log\left(1+\frac{\bar{c}n-\beta-(1-\alpha)(1+\bar{c})/\overline{X}_n}{n+\beta}\right)\bigg\}.
\end{align*}

Therefore, in addition to the values we listed at the end of \Cref{map_poisson}, we have the following. We assume that $\alpha<1$, $n\overline{X}_n>1-\alpha$ and  $\max\left\{\sqrt{\frac{1}{n\overline{X}_n+\alpha-1}},\frac{1}{n}\cdot\frac{\beta\overline{X}_n+1-\alpha}{\overline{X}_n+(\alpha-1)/n}\right\}<0.229$. We let $\bar{c}\in\left(\frac{1}{n}\cdot\frac{\beta\overline{X}_n+1-\alpha}{\overline{X}_n+(\alpha-1)/n},0.229\right]\cap \left(\sqrt{\frac{1}{n\overline{X}_n+\alpha-1}},0.229\right]$. Then

\begin{enumerate}[i)]

	\item $\deltabar=\bar{c}\frac{n\overline{X}_n+(\alpha-1)}{(n+\beta)}$
	\item $\postfisherinformation(\map)=\frac{(n+\beta)^2}{n(n\overline{X}_n+\alpha-1)}$
	\item $\overline{M}_2=\frac{2\cdot (n+\beta)^3}{(1-\bar{c})^3n(n\overline{X}_n+\alpha-1)^2}$
	\item $\kappabar=-\overline{X}_n\bigg\{\frac{\beta-\bar{c}n+(1-\alpha)(1+\bar{c})/\overline{X}_n}{n+\beta}+\log\left(1+\frac{\bar{c}n-\beta-(1-\alpha)(1+\bar{c})/\overline{X}_n}{n+\beta}\right)\bigg\}$.
\end{enumerate}
	\subsection{Calculations for \Cref{weibull_example}: the MAP-centric approach}\label{calculations_weibull}
	Let $k$ be the shape of the Weibull and let $X_1,\dots,X_n\geq 0$ be our data.
Our log-likelihood is given by:
\[
\loglikelihood(\theta)=n\left[\log(k)-\log(\theta)\right]+(k-1)\sum_{i=1}^n\log(X_i)-\frac{\sum_{i=1}^n X_i^k}{\theta}
\]
\subsubsection{Calculating $\mle$ and $\fisherinformation(\mle)$}
Now
\begin{align*}
&\loglikelihood'(\param)=-\frac{n}{\theta}+\frac{\sum_{i=1}^nX_i^k}{\theta^{2}},\quad \loglikelihood''(\param)=\frac{n}{\theta^2}-\frac{2\sum_{i=1}^n X_i^k}{\theta^{3}}
\end{align*}
The MLE is $\mle=\frac{\sum_{i=1}^n X_i^k}{n}=:\overline{X^k}(n)$.
We have that
\[
\fisherinformation(\mle)=-\frac{1}{\mle^2}+\frac{2}{\mle^{2}}=\frac{1}{\mle^{2}}.
\]
\subsubsection{Calculating $M_2$}
Now, note that
\begin{align*}
& \loglikelihood'''(\theta)=\frac{6\sum_{i=1}^n X_i^k-2n\theta}{\theta^{4}},\quad		\loglikelihood^{(4)}(\theta) = \frac{6n\theta - 24 \sum_{i=1}^n X_i^k}{\theta^5},\quad \loglikelihood^{(5)}(\theta)=\frac{120\sum_{i=1}^n X_i^k - 24n\theta}{\theta^6}.
\end{align*}
Therefore,  for $0<\theta<2\mle$, $L_n'''$ is decreasing and positive.  This means that, if we let $0<\deltamle<\hat{\theta}_n$, then
\begin{align*}
	\sup_{\theta\in(\hat{\theta}_n-\deltamle,\hat{\theta}_n+\deltamle)}\frac{|L_n'''(\theta)|}{n}\leq \frac{L_n'''(\hat{\theta}_n-\deltamle)}{n}=:M_2.
\end{align*}

\subsubsection{Calculating $\hat{M}_1$}

Now, for a given shape $\alpha>0$ and scale $\beta>0$,
\[
\prior(\theta)=\frac{\beta^{\alpha}}{\Gamma(\alpha)}\left(\frac{1}{\theta}\right)^{\alpha+1}\exp\left(-\beta/\theta\right),\quad \pi'(\theta)=\frac{\beta^{\alpha}\exp\left(-\beta/\theta\right)}{\Gamma(\alpha)\theta^{\alpha+2}}\left(-\alpha-1+\frac{\beta}{\theta}\right)
\]
and it follows that, for $\theta>\frac{\beta}{\alpha+1}$, $\frac{1}{\pi(\theta)}$ is increasing and it is decreasing otherwise. Therefore:

\[
\sup_{\theta\in(\mle-\hat{\deltamle},\mle+\deltamle)}\left|\frac{1}{\pi(\theta)}\right|\leq
\max\left\{ \frac{1}{\pi(\hat{\theta}_n-\deltamle)},\frac{1}{\pi(\hat{\theta}_n+\deltamle)}\right\}=:\hat{M}_1.
\]

\subsubsection{Calculating $\map$ and $\postfisherinformation(\map)$}
Now
\begin{align*}
	\overline{L}_n(\param)=n\left[\log(k)-\log(\theta)\right]+(k-1)\sum_{i=1}^n\log(X_i)-\frac{\sum_{i=1}^n X_i^k}{\theta}-(\alpha+1)\log(\theta)-\frac{\beta}{\theta}.
\end{align*}
Therefore,
\begin{align*}
	&\overline{L}_n(\param)'=-\frac{n}{\theta}+\frac{\sum_{i=1}^n X_i^k}{\theta^2}-\frac{\alpha+1}{\theta}+\frac{\beta}{\theta^2}\\
	&\overline{L}_n(\param)''=\frac{n}{\theta^2}-\frac{2\sum_{i=1}^nX_i^k}{\theta^{3}}+\frac{\alpha+1}{\theta^2}-\frac{2\beta}{\theta^3}
\end{align*}
and the MAP $\bar{\theta}_n$ is given by
\begin{align*}
	(n+\alpha+1)\bar{\theta}_n=\beta+\sum_{i=1}^n X_i^k\quad\Leftrightarrow\quad \bar{\theta}_n=\frac{\beta+\sum_{i=1}^n X_i^k}{n+\alpha+1}.
\end{align*}
Moreover,
\begin{align*}
    \postfisherinformation(\map)=-\frac{1}{\map^2}+\frac{2\sum_{i=1}^nX_i^k}{n\map^3}-\frac{\alpha+1}{n\map^2}+\frac{2\beta}{n\map^3}.
\end{align*}
\subsubsection{Calculating $\bar{M}_2$}
Now, note that
\begin{align*}
\overline{L}_n'''(\param)=-\frac{2n}{\theta^3}+\frac{6\sum_{i=1}^n X_i^k}{\theta^{4}}-\frac{2(\alpha+1)}{\theta^3}+\frac{6\beta}{\theta^4},\quad \overline{L}_n^{(4)}(\param)=\frac{6n}{\theta^4}-\frac{24\sum_{i=1}^n X_i^k}{\theta^{5}}+\frac{6(\alpha+1)}{\theta^4}-\frac{24\beta}{\theta^5}.
\end{align*}
Therefore $\overline{L}'''_n$ is increasing if and only if
\begin{align*}
	6(n+\alpha+1)\theta>24\left(\beta+\sum_{i=1}^n X_i^k\right)\quad\Leftrightarrow \theta>4\map
\end{align*}
This means that, for $\deltabar\in\left(0,\map\right)$ and $\theta\in(\bar{\theta}_n-\deltabar,\bar{\theta}_n+\deltabar)$, $\overline{L}'''_n$ is decreasing and
\begin{align*}
	\sup_{|\theta-\map|<\deltabar}\frac{|\overline{L}_n'''(\theta)|}{n}\leq \frac{|\overline{L}_n'''(\map-\deltabar)|}{n}=:\overline{M}_2.
\end{align*}
\subsubsection{Calculating $\kappabar$}\label{kappa_bar_weibull}
Now, note that, for $\theta<\mle$, $L_n$, is increasing and otherwise it's decreasing. This means that,
\begin{align*}
	&\sup_{|\theta-\hat{\theta}_n|>\deltabar-|\hat{\theta}_n-\bar{\theta}_n|}\frac{L_n(\theta)-L_n(\mle)}{n}\\
	\leq& \max\left\{ \frac{L_n(\hat{\theta}_n-\deltabar+|\hat{\theta}_n-\bar{\theta}_n|)-L_n(\hat{\theta}_n)}{n},\frac{L_n(\hat{\theta}_n+\deltabar-|\hat{\theta}_n-\bar{\theta}_n|)-L_n(\hat{\theta}_n)}{n}\right\}=:-\kappabar.
\end{align*}
\subsubsection{Constraints on $\deltabar$ and $\deltamle$}
Finally, we need the derive the constraints on $\deltabar$. We first assume that $0<\deltabar<\bar{\theta}_n$. We also suppose that $\deltabar> \max\left(\|\bar{\theta}_n-\hat{\theta}_n\|,\frac{1}{\sqrt{n\postfisherinformation(\map)}}\right)$ and conditions on how large $n$ needs to be for this to hold are easy to obtain numerically. We also require $\deltabar<\frac{\lambda(\bar{\theta}_n)}{\overline{M}_2}$. Looking closer at this last condition, we require that:
\begin{align*}
	\deltabar<\minevaluemap\left(\bar{\theta}_n-\deltabar\right)^4\left[\left(-2-\frac{2\alpha+2}{n}	\right)\left(\bar{\theta}_n-\deltabar\right)+6\left(\frac{\beta}{n}+\hat{\theta}_n\right)\right]^{-1}
\end{align*}

Letting $0<\tilde{\delta}:=\map-\deltabar<\bar{\theta}_n$, we therefore require
\begin{align*}
	\tilde{\delta}+\minevaluemap\tilde{\delta}^4\left[\left(-2-\frac{2\alpha+2}{n}	\right)\tilde{\delta}+6\left(\frac{\beta}{n}+\hat{\theta}_n\right)\right]^{-1}>\bar{\theta}_n
\end{align*}
which is equivalent to:
\begin{align*}
	\tilde{\delta}+\tilde{\delta}^4\minevaluemap\left(2+\frac{2\alpha+2}{n}	\right)^{-1}\left[3\bar{\theta}_n-\tilde{\delta}\right]^{-1}>\bar{\theta}_n.
\end{align*}
This condition will be satisfied if
\begin{align}\label{ineq}
	\tilde{\delta}+\tilde{\delta}^4\minevaluemap\left(2+\frac{2\alpha+2}{n}	\right)^{-1}\left[3\bar{\theta}_n\right]^{-1}>\bar{\theta}_n.
\end{align}
The left-hand side of \cref{ineq} is increasing in $\tilde{\delta}$ and is clearly strictly greater than $\map$ for $\tilde{\delta}=\map$. This means that there exists a choice of $\deltabar$ that yields $\deltabar<\frac{\minevaluemap}{\overline{M}_1}$ and the set of such choices can be obtained by solving \cref{ineq} numerically. Finally, in order to make sure that the condition on $\deltamle$ is satisfied, we just need to check numerically how large $n$ needs to be so that $\deltamle>\frac{1}{\sqrt{n\fisherinformation(\mle)}}$.

% We see that:
% \begin{align*}
% \left(-2-\frac{2\alpha+2}{n}	\right)\tilde{\delta}+6\left(\frac{\beta}{n}+\hat{\theta}_n\right)>0\quad \Leftrightarrow\quad \frac{6\left(\frac{\beta}{n}+\hat{\theta}_n\right)}{2+\frac{2\alpha+2}{n}	}>\tilde{\delta}\quad \Leftrightarrow\quad 3\bar{\theta}_n>\tilde{\delta},
% \end{align*}
% which is true. It follows that the left-hand-side of (\Cref{ineq}) is positive and increasing in $\tilde{\delta}$ for $0<\tilde{\delta}<\bar{\theta}_n$. Rewriting this left-hand side we see that our condition can be expressed in the following way:

\subsection{Calculations for \Cref{logistic_example}}\label{example_detail_logistic}
\subsubsection{Calculating $\fisherinformation(\mle)$ and $\postfisherinformation(\map)$}
Note that:
\begin{align*}
\fisherinformation(\mle)=&\frac{1}{n}\sum_{k=1}^n\frac{e^{X_k^T\mle Y_k}}{(1+e^{X_k^T\mle Y_k})^2}X_k\left(X_k\right)^T;\\
\postfisherinformation(\map)
=&\frac{1}{n}\sum_{k=1}^n\frac{e^{X_k^T\map Y_k}}{(1+e^{X_k^T\map Y_k})^2}X_k\left(X_k\right)^T\\
&+\frac{\nu+d}{2\nu n}\left[1+\frac{1}{\nu}(\map-\mu)^T\Sigma^{-1}(\map-\mu)\right]^{-1}\left(\Sigma^{-1}+\text{diag}\left(\Sigma^{-1}\right)\right)\\
& -\frac{\nu+d}{2\nu^2 n}\left[1+\frac{1}{\nu}(\map-\mu)^T\Sigma^{-1}(\map-\mu)\right]^{-2}\\
&\hspace{2cm}\cdot\left[\left(\Sigma^{-1}+\text{diag}\left(\Sigma^{-1}\right)\right)(\map-\mu)\right]\left[\left(\Sigma^{-1}+\text{diag}\left(\Sigma^{-1}\right)\right)(\map-\mu)\right]^T.
\end{align*}
\subsubsection{Calculating $M_1$, $\widetilde{M}_1$ and $\hat{M}_1$}
Note that
\begin{align}
 \prior'(\param)=&\frac{\Gamma((\nu+d)/2)}{\Gamma(\nu/2)\nu^{d/2}\pi^{d/2}|\Sigma|^{1/2}}\left(-\frac{\nu+d}{2}\right)\left[1+\frac{1}{\nu}(\param-\mu)^T\Sigma^{-1}(\param-\mu)\right]^{-(\nu+d)/2-1}\notag\\
	&\cdot\left[\frac{1}{\nu}\left(\Sigma^{-1}+\text{diag}\left(\Sigma^{-1}_{1,1},\dots,\Sigma^{-1}_{d,d}\right)\right)\right](\param-\mu).\label{pi_logistic_der}
\end{align}
It follows from the expressions in \cref{t_prior,pi_logistic_der} that
\begin{align*}
	\sup_{\theta:\|\theta-\hat{\theta}\|<\deltamle}|\pi(\theta)|\leq &\frac{\Gamma((\nu+d)/2)}{\Gamma(\nu/2)\nu^{d/2}\pi^{d/2}|\Sigma|^{1/2}}=:\widetilde{M_1};\\
	\sup_{\theta:\|\theta-\hat{\theta}\|<\deltamle}\frac{1}{|\pi(\theta)|}\leq& \sup_{\theta:\|\theta-\hat{\theta}\|<\deltamle}\frac{\Gamma(\nu/2)\nu^{d/2}\pi^{d/2}|\Sigma|^{1/2}}{\Gamma((\nu+d)/2)}  \left[1+\frac{1}{\nu\lambda_{\min}(\Sigma)}\|\theta-\mu\|^2\right]^{(\nu+d)/2} \\
	\leq &\frac{\Gamma(\nu/2)\nu^{d/2}\pi^{d/2}|\Sigma|^{1/2}}{\Gamma((\nu+d)/2)}  \left[1+\frac{2\deltamle^2+2\|\hat{\theta}-\mu\|^2}{\nu\lambda_{\min}(\Sigma)}\right]^{(\nu+d)/2} =:\hat{M}_1;\\
	\sup_{\theta:\|\theta-\hat{\theta}\|<\deltamle}\frac{\| \pi'(\theta)\|}{|\pi(\theta)|}=&\sup_{\theta:\|\theta-\hat{\theta}\|<\deltamle}\left(\frac{\nu+d}{2}\right)\frac{\left\|\frac{1}{\nu}\left(\Sigma^{-1}+\text{diag}\left(\Sigma^{-1}_{1,1},\dots,\Sigma^{-1}_{d,d}\right)\right)(\theta-\mu)\right\|}{\left[1+\frac{1}{\nu}(\theta-\mu)^T\Sigma^{-1}(\theta-\mu)\right]}\\
	\leq & \left(\frac{\nu+d}{2}\right)\frac{\left(\deltamle+\|\hat{\theta}-\mu\|\right)\left\|\Sigma^{-1}+\text{diag}\left(\Sigma^{-1}_{1,1},\dots,\Sigma^{-1}_{d,d}\right)\right\|}{\nu}\\
	\leq & \frac{\left(\nu+d\right)\left(\deltamle+\|\hat{\theta}-\mu\|\right)}{\nu\lambda_{\min}(\Sigma)}=:M_1.
\end{align*}
\subsubsection{Calculating $M_2$ and $\overline{M}_2$}
Note that, for all $\param\in\mathbbm{R}^d$,
\begin{align*}
&\loglikelihood'''\left(\param\right)[u_1,u_2,u_3]
=\sum_{i=1}^nY_i^3\sum_{j,k,l=1}^d \frac{e^{X_i^T\param Y_i}\left(e^{X_i^T\param Y_i}-1\right)}{\left(1+e^{X_i^T\param Y_i}\right)^3}X_i^{(j)}X_i^{(k)}X_i^{(l)}u_1^{(j)}u_1^{(k)}u_1^{(l)}
\end{align*}
and therefore, for all $\param\in\mathbbm{R}^d$,
\begin{align}
	\frac{1}{n}\|\loglikelihood'''\left(\param\right)\|\leq&\frac{1}{n}\sum_{k=1}^n\|X_k\|^3\frac{e^{X_k^T\param Y_k}\left|e^{X_k^T\param Y_k}-1\right|}{\left(1+e^{X_k^T\param Y_k}\right)^3}
	\leq\frac{1}{6\sqrt{3}\,n}\sum_{k=1}^n\|X_k\|^3=:M_2.\label{m2_logistic}
\end{align}
Now, a straightforward calculation reveals that, for $\|u\|\leq 1, \|v\|\leq 1, \|w\|\leq 1$ and any $\param\in\mathbbm{R}^d$, and $\deltabar\leq 1$,
\begin{align*}
	&\sup_{\|\param-\map\|\leq\deltabar}\left|\sum_{i,j,k=1}^d\left(\frac{\partial^3}{\partial \theta_j\partial\theta_i\partial\theta_k}\log\pi(\theta)\right)u_iv_jw_k\right|\\
	\leq &\sup_{\|\param-\map\|\leq\deltabar}\Bigg\{\frac{3(\nu+d)}{\left[\nu+(\theta-\mu)^T\Sigma^{-1}(\theta-\mu)\right]^2}\left\|\Sigma^{-1}+\text{diag}(\Sigma^{-1})\right\|_{op}^2\|\theta-\mu\|\\
	&+\frac{2(\nu+d)}{\left[\nu+(\theta-\mu)^T\Sigma^{-1}(\theta-\mu)\right]^3}\left\|\Sigma^{-1}+\text{diag}(\Sigma^{-1})\right\|_{op}^3\|\theta-\mu\|^3\Bigg\}\\
	\leq &\frac{3(\nu+d)}{\nu^2}\left\|\Sigma^{-1}+\text{diag}(\Sigma^{-1})\right\|_{op}^2\left(1+\|\map-\mu\|\right)\\
	&+\frac{2(\nu+d)}{\nu^2}\left\|\Sigma^{-1}+\text{diag}(\Sigma^{-1})\right\|_{op}^3\left(1+\|\map-\mu\|\right)^3.
\end{align*}
Combining this with \cref{m2_logistic}, we obtain
\begin{align}
	\frac{1}{n}\left\|\logposterior'''(\map)\right\|\leq &\frac{1}{6\sqrt{3}\,n}\sum_{k=1}^n\|X_k\|^3 + \frac{3(\nu+d)}{\nu^2\,n}\left\|\Sigma^{-1}+\text{diag}(\Sigma^{-1})\right\|_{op}^2\left(1+\|\map-\mu\|\right)\notag\\
	&+\frac{2(\nu+d)}{\nu^3\,n}\left\|\Sigma^{-1}+\text{diag}(\Sigma^{-1})\right\|_{op}^3\left(1+\|\map-\mu\|\right)^3=:\overline{M}_2.\label{m2bar_logistic}
\end{align}
\subsubsection{Calculating $\kappamle$ and $\kappabar$} \label{calculating_kappa}
Note that $\loglikelihood$ is strictly concave. Therefore,
	\begin{align*}
		\sup_{\param:\|\param-\mle\|>\deltamle}\frac{L_n(\param)-L_n(\hat{\theta})}{n}\leq& \sup_{\param:\|\param-\mle\|=\deltamle}\frac{L_n(\theta)-L_n(\hat{\theta})}{n}\\
		\leq
		&\sup_{\param:\|\param-\mle\|=\deltamle}\left\{-\frac{1}{2}\left(\param-\mle\right)^T\fisherinformation(\mle)\left(\param-\mle\right)\right\}+\frac{M_2\deltamle^3}{2}\\
		\leq
		&-\frac{1}{2}\minevaluemle\deltamle^2+\frac{M_2\deltamle^3}{2}=:-\kappamle.
	\end{align*}
Since $M_2$ in \cref{m2_logistic} provides a uniform bound on $	\frac{1}{n}\|\loglikelihood'''\left(\param\right)\|$ over $\param\in\mathbbm{R}^d$, a similar calculation shows:
	\begin{align*}
	&\sup_{\param:\|\param-\mle\|>\deltabar-\|\mle-\map\|}\frac{L_n(\param)-L_n(\mle)}{n}\\
	\leq
	&-\frac{1}{2}\minevaluemle\left(\deltabar-\|\mle-\map\|\right)^2+\frac{M_2\left(\deltabar-\|\mle-\map\|\right)^3}{2}=:-\kappabar.
\end{align*}

\subsubsection{Finding appropriate values of $\deltamle$ and $\deltabar$}
	In order to apply our results in the MLE-centric approach, we need
	\begin{align}\label{delta_bound}
\sqrt{\frac{\Tr\left[\fisherinformation(\mle)^{-1}\right]}{n}}<\deltamle<\frac{\minevaluemle}{M_2}.
\end{align}
 This assumption also ensures that $\kappamle$ from \Cref{calculating_kappa} is positive. In the MAP-centric approach we also require
 \begin{align*} \max\left\{\|\hat{\theta}_n-\bar{\theta}_n\|,\sqrt{\frac{\Tr\left(\postfisherinformation(\map)^{-1}\right)}{n}}\right\}< \deltabar<\frac{\minevaluemap}{\overline{M}_2},
\end{align*}
 which also ensures that $\kappabar$ from \Cref{calculating_kappa} is positive.
Such choices of $\deltabar$ and $\deltamle$ will be available for sufficiently large $n$. In order to choose the appropriate concrete values of $\deltabar$ and $\deltamle$ one can run a numerical optimization scheme.

\end{appendix}

\vskip 0.2in
\bibliography{Bibliography}

\end{document}